\numberwithin{equation}{section}
\newtheorem{thm}{Theorem}[section]
\newtheorem{prop}[thm]{Proposition}
\newtheorem{conj}[thm]{Conjecture}
\newtheorem{lem}[thm]{Lemma}
\newtheorem{cor}[thm]{Corollary}
\theoremstyle{definition}
\newtheorem{defin}[thm]{Definition}
\newtheorem{rmk}[thm]{Remark}
\newtheorem{ex}[thm]{Example}
\def\x{{\bf x}}
\def\wt{{\textrm{wt}}}
\def\im{{\textrm{im}}}
\def\opp{{\textrm{opp}}}
\def\field{{\mathbb{C}}}
\begin{document}
\title[Gale-Robinson quivers]{Gale-Robinson quivers: from representations to combinatorial formulas}
\author{Max Glick \and Jerzy Weyman}

\address{Department of Mathematics, The Ohio State University, Columbus, OH 43210, USA}
\address{Department of Mathematics, University of Connecticut, Storrs, CT 06269, USA}
\thanks{J. W. was partially supported by NSF grant DMS-1400740}
\keywords{Gale-Robinson sequences, $F$-polynomials, dimer model}

\begin{abstract}
We investigate a family of representations of Gale-Robinson quivers that are geared towards providing concrete information about the corresponding cluster algebras.  In this way, we provide a representation theoretic explanation for known combinatorial formulas for the Gale-Robinson sequence and also obtain similar formulas for several other cluster variables.


\end{abstract}

\maketitle


\section{Introduction} \label{secIntro}
The Gale-Robinson recurrence 
\begin{displaymath}
x_{i}x_{i+N} = x_{i+a}x_{i+N-a} + x_{i+c}x_{i+N-c}, \quad i \in \mathbb{Z}
\end{displaymath}
is one of the earliest instances of the Laurent phenomenon \cite{FZ0}: each $x_i$ for $i>N$, when expressed as a function of $x_1, \ldots, x_N$, is given by a Laurent polynomial.  Combinatorial formulas for $x_i$ have been found by D. Speyer \cite{S}, M. Bousquet-M\'elou, J. Propp and J. West \cite{BPW}, and I. Jeong, G. Musiker and S. Zhang \cite{JMZ}.  Remarkably, this simple seeming recurrence finds itself at the intersection of two major areas.
\begin{enumerate}
\item Cluster algebras \cite{FZ1}: There is a quiver for which a certain sequence of mutations generates the $x_i$.  The problem of finding combinatorial formulas for cluster variables in general is very open including the case of variables other than the $x_i$ in Gale-Robinson cluster algebras.  One instance of this latter problem is addressed in \cite{GLVY}.
\item The dimer model on the torus: The Gale-Robinson quiver embeds on a torus and as such gives rise to a dimer model.  R. Eager \cite{E} explores this points of view and identifies the associated toric algebra as the cone over an $L(a,b,c)$ singularity where $b=N-a$.
\end{enumerate}

Our aim is to bring these two takes on Gale-Robinson sequences closer together.  Our main tool is the representation theory of quivers with potential due to H. Derksen, J. Weyman, and A. Zelevinsky \cite{DWZ1,DWZ2}.  Roughly speaking we study a family of representations that are well-structured enough to give combinatorial formulas for the corresponding cluster variables.  These cluster variable end up including the $x_i$ and also several others.  Our approach is most similar to that of Eager and S. Franco \cite{EF} who deal in flavored quivers in place of representations of quivers with potential.

The remainder of this Section introduces the main objects of study including what we call calibrated representations of a Gale-Robinson quiver.  Our main structural result, Theorem \ref{thmMain}, identifies the conditions under which mutation preserves the calibrated property.  Section \ref{secBackground} provides background on cluster algebras and the dimer model.  In Section \ref{secCalibrated} we classify calibrated representations.  In Section \ref{sectau} we turn our focus to mutation of these representations and give a proof of Theorem \ref{thmMain}.  Finally Section \ref{secFPolynomials} identifies the combinatorial formula generated by a calibrated representation (Theorem \ref{thmFM}) and gives several examples.

\subsection{The quiver, the potential, and the lattice}
Fix positive integers $a,b,c,d$ with $a+b=c+d=N$.  Assume $\gcd(a,c,N) = 1$.  The corresponding Gale-Robinson quiver is a quiver $Q$ on vertex set $\{1,2,\ldots, N\}$.  There are eight possible types of arrows $i \to j$ which occur in the following circumstances:
\begin{itemize}
\item $j = i+a$
\item $j = i+b$
\item $j = i-c$
\item $j = i-d$
\item $j = i+a-c$ with $i+a > N$ and $i-c < 1$
\item $j = i+a-d$ with $i+a > N$ and $i-d < 1$
\item $j = i+b-c$ with $i+b > N$ and $i-c < 1$
\item $j = i+b-d$ with $i+b > N$ and $i-d < 1$
\end{itemize}
More precisely, the number of arrows from $i$ to $j$ equals the number of these conditions that are satisfied.  Figure \ref{figQuiver156} gives an example of a Gale-Robinson quiver.

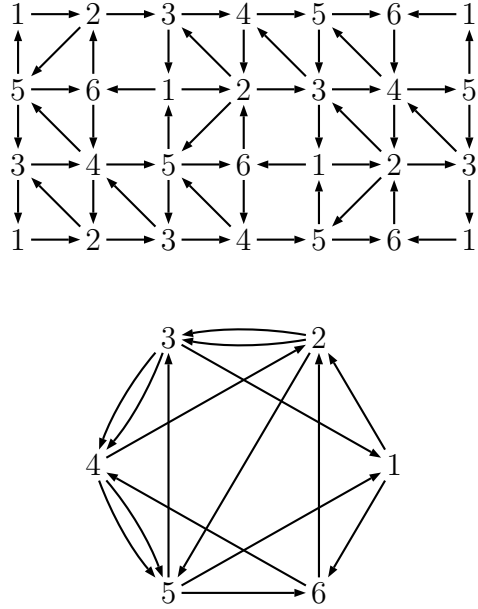
\begin{figure}
\begin{pspicture}(0,1)(7,9)
\rput(0,5){
\rput(1,1){\rnode{v11}{$1$}}
\rput(2,1){\rnode{v21}{$2$}}
\rput(3,1){\rnode{v31}{$3$}}
\rput(4,1){\rnode{v41}{$4$}}
\rput(5,1){\rnode{v51}{$5$}}
\rput(6,1){\rnode{v61}{$6$}}
\rput(7,1){\rnode{v71}{$1$}}
\rput(1,2){\rnode{v12}{$3$}}
\rput(2,2){\rnode{v22}{$4$}}
\rput(3,2){\rnode{v32}{$5$}}
\rput(4,2){\rnode{v42}{$6$}}
\rput(5,2){\rnode{v52}{$1$}}
\rput(6,2){\rnode{v62}{$2$}}
\rput(7,2){\rnode{v72}{$3$}}
\rput(1,3){\rnode{v13}{$5$}}
\rput(2,3){\rnode{v23}{$6$}}
\rput(3,3){\rnode{v33}{$1$}}
\rput(4,3){\rnode{v43}{$2$}}
\rput(5,3){\rnode{v53}{$3$}}
\rput(6,3){\rnode{v63}{$4$}}
\rput(7,3){\rnode{v73}{$5$}}
\rput(1,4){\rnode{v14}{$1$}}
\rput(2,4){\rnode{v24}{$2$}}
\rput(3,4){\rnode{v34}{$3$}}
\rput(4,4){\rnode{v44}{$4$}}
\rput(5,4){\rnode{v54}{$5$}}
\rput(6,4){\rnode{v64}{$6$}}
\rput(7,4){\rnode{v74}{$1$}}
\psset{nodesep=2pt}
\psset{arrowinset=0}
\ncline{->}{v11}{v21}
\ncline{->}{v21}{v31}
\ncline{->}{v31}{v41}
\ncline{->}{v41}{v51}
\ncline{->}{v51}{v61}
\ncline{->}{v71}{v61}
\ncline{->}{v12}{v22}
\ncline{->}{v22}{v32}
\ncline{->}{v32}{v42}
\ncline{->}{v52}{v42}
\ncline{->}{v52}{v62}
\ncline{->}{v62}{v72}
\ncline{->}{v13}{v23}
\ncline{->}{v33}{v23}
\ncline{->}{v33}{v43}
\ncline{->}{v43}{v53}
\ncline{->}{v53}{v63}
\ncline{->}{v63}{v73}
\ncline{->}{v14}{v24}
\ncline{->}{v24}{v34}
\ncline{->}{v34}{v44}
\ncline{->}{v44}{v54}
\ncline{->}{v54}{v64}
\ncline{->}{v74}{v64}
\ncline{->}{v12}{v11}
\ncline{->}{v22}{v21}
\ncline{->}{v32}{v31}
\ncline{->}{v42}{v41}
\ncline{->}{v51}{v52}
\ncline{->}{v61}{v62}
\ncline{->}{v72}{v71}
\ncline{->}{v13}{v12}
\ncline{->}{v23}{v22}
\ncline{->}{v32}{v33}
\ncline{->}{v42}{v43}
\ncline{->}{v53}{v52}
\ncline{->}{v63}{v62}
\ncline{->}{v73}{v72}
\ncline{->}{v13}{v14}
\ncline{->}{v23}{v24}
\ncline{->}{v34}{v33}
\ncline{->}{v44}{v43}
\ncline{->}{v54}{v53}
\ncline{->}{v64}{v63}
\ncline{->}{v73}{v74}
\ncline{->}{v21}{v12}
\ncline{->}{v22}{v13}
\ncline{->}{v24}{v13}
\ncline{->}{v31}{v22}
\ncline{->}{v41}{v32}
\ncline{->}{v43}{v32}
\ncline{->}{v43}{v34}
\ncline{->}{v53}{v44}
\ncline{->}{v62}{v51}
\ncline{->}{v62}{v53}
\ncline{->}{v63}{v54}
\ncline{->}{v72}{v63}
}
\rput(6,3){\rnode{v1}{$1$}}
\rput(5,4.7){\rnode{v2}{$2$}}
\rput(3,4.7){\rnode{v3}{$3$}}
\rput(2,3){\rnode{v4}{$4$}}
\rput(3,1.3){\rnode{v5}{$5$}}
\rput(5,1.3){\rnode{v6}{$6$}}
\psset{nodesep=2pt}
\psset{arrowinset=0}
\ncline{->}{v1}{v2}
\ncarc[arcangle=-10]{->}{v2}{v3}
\ncarc[arcangle=10]{->}{v2}{v3}
\ncarc[arcangle=-10]{->}{v3}{v4}
\ncarc[arcangle=10]{->}{v3}{v4}
\ncarc[arcangle=-10]{->}{v4}{v5}
\ncarc[arcangle=10]{->}{v4}{v5}
\ncline{->}{v5}{v6}
\ncline{->}{v1}{v6}
\ncline{->}{v3}{v1}
\ncline{->}{v4}{v2}
\ncline{->}{v5}{v3}
\ncline{->}{v6}{v4}
\ncline{->}{v5}{v1}
\ncline{->}{v6}{v2}
\ncline{->}{v2}{v5}
\end{pspicture}
\caption{The Gale-Robinson quiver $Q$ (bottom) and its lift $\tilde{Q}$ (top) for $a=1$, $b=5$, $c=2$, $d=4$, $N=6$}
\label{figQuiver156}
\end{figure}

\begin{rmk}
In some cases, the quiver $Q$ has $2$-cycles.  For example, if $(a,b,c) = (1,4,2)$ (so $d=3$ and $N=5$) then there are arrows $2 \to 4$ and $4 \to 2$.  Gale-Robinson quivers are a special case of the family of one-periodic quivers of A. Fordy and R. Marsh \cite{FM}, except that their construction cancels out two cycles when possible.  The above definition with the $2$-cycles intact is due to Eager \cite{E}.
\end{rmk}

The Gale-Robinson quiver can be embedded on the torus.  Its lift $\tilde{Q}$ to the plane has vertex set $\mathbb{Z}^2$.  Each $(x,y) \in \mathbb{Z}^2$ is a lift of the vertex $v \in \{1,2, \ldots, N\}$ for which $v \equiv ax+cy \pmod{N}$.  Each arrow of $\tilde{Q}$ is either a unit vector in one of the four compass directions or a diagonal arrow cutting across a lattice square in one of the mixed directions (e.g. Northeast).  The direction of a lift of a given arrow is determined by its type, as summarized in the first two columns of Table \ref{tabArrowTypes}.  

Thanks to the embedding on a torus, the theory of the dimer model (see e.g. \cite{B,MR}) applies, and we can extract additional structure including
\begin{itemize}
\item a potential $W$ defined as the sum of the boundaries of the counterclockwise faces minus the sum of the boundaries of the clockwise faces,
\item the Jacobian algebra $A$, and
\item a lattice $\Lambda$ together with an associated grading on $A$.
\end{itemize}
The lattice is defined abstractly in terms of the chain complex for $Q$ embedded on the torus.  In the case of Gale-Robinson quivers, there is a homomorphism
\begin{displaymath}
\Lambda \to \mathbb{Z}^4
\end{displaymath}
inducing a grading
\begin{displaymath}
A = \bigoplus_{\lambda \in \mathbb{Z}^4} A_{\lambda}
\end{displaymath}
which is simpler to define.  Each arrow $\alpha$ of $Q$ has a degree $\wt(\alpha) \in \mathbb{Z}^4$ depending on its type as summarized in Table \ref{tabArrowTypes}.  The $\mathbb{Z}^4$ grading captures all the information we need, so we will focus on $\mathbb{Z}^4$ instead of $\Lambda$.  For $\pi$ a path, let $\wt(\pi)$ be the sum of the weights of the arrows of $\pi$.

\begin{table}
\begin{tabular}{r|r|r|r}
$j-i$ & Direction & $(\Delta x, \Delta y)$ & $\wt(\alpha)$ \\
\hline
$a$   & East & $(1,0)$ & $(1,0,0,0)$ \\
$b$   & West & $(-1,0)$ & $(0,1,0,0)$ \\
$-c$  & South & $(0,-1)$ & $(0,0,1,0)$ \\
$-d$  & North & $(0,1)$ & $(0,0,0,1)$ \\
$a-c$ & Southeast & $(1,-1)$ & $(1,0,1,0)$ \\
$a-d$ & Northeast & $(1,1)$ & $(1,0,0,1)$ \\
$b-c$ & Southwest & $(-1,-1)$ & $(0,1,1,0)$ \\
$b-d$ & Northwest & $(-1,1)$ & $(0,1,0,1)$ \\
\end{tabular}
\caption{The types of arrows $\alpha = i \to j$ in a Gale-Robinson quiver}
\label{tabArrowTypes}
\end{table}

\begin{lem} \label{lemWeights}
Let $\pi$ be a path in $Q$ with $\wt(\pi) = \lambda$.  
\begin{itemize}
\item If $\pi$ starts at $i$ and ends at $j$ then $j-i = a\lambda_1 + b\lambda_2 - c\lambda_3 - d\lambda_4$.
\item A lift of $\pi$ to $\tilde{Q}$ starting at vertex $(x,y) \in \mathbb{Z}^2$ will end at vertex $(x+\lambda_1-\lambda_2, y-\lambda_3+\lambda_4)$.
\end{itemize}
\end{lem}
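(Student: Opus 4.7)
My plan is to reduce both claims to per-arrow identities and then invoke additivity. Each of the three quantities appearing in the statement---the vertex difference $j-i$, the planar displacement $(\Delta x, \Delta y)$ of a lift, and the weight $\wt(\pi)$---is additive under concatenation of arrows, the first by telescoping along the sequence of visited vertices and the latter two by construction. Hence it suffices to verify the two identities for a single arrow, that is, for each of the eight rows of Table~\ref{tabArrowTypes}.

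For each of those rows, the check is immediate from the table. The first column lists $j-i$ for each arrow type, and one sees that in every case this equals $a\,\wt(\alpha)_1 + b\,\wt(\alpha)_2 - c\,\wt(\alpha)_3 - d\,\wt(\alpha)_4$; for instance a Southeast arrow has $j - i = a - c$ and $\wt(\alpha) = (1,0,1,0)$, so the expression collapses to $a - c$, while a Northwest arrow has $j - i = b - d$ and $\wt(\alpha) = (0,1,0,1)$, giving $b - d$ again. Similarly, the column $(\Delta x, \Delta y)$ is by inspection equal to $(\wt(\alpha)_1 - \wt(\alpha)_2,\, \wt(\alpha)_4 - \wt(\alpha)_3)$ in all eight rows. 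Summing over the arrows $\alpha_1, \ldots, \alpha_k$ of $\pi$ and collecting coefficients then yields both identities in full generality.

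The one conceptual point worth flagging is that the first identity is asserted as an equality of integers rather than a congruence modulo $N$. This is consistent precisely because the diagonal arrow types in Gale--Robinson quivers were introduced so that each wrap-around in the torus embedding is absorbed into a single arrow whose literal difference $j-i$ already lies in $\{-N+1, \ldots, N-1\}$; no modular correction is ever forced on us mid-path. I do not foresee a genuine obstacle here---the content of the lemma is essentially that the weights in Table~\ref{tabArrowTypes} were defined so as to encode exactly these two linear relations, and the only care needed is to handle the four diagonal rows without a sign error.
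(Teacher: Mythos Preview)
Your proposal is correct and matches the paper's own argument, which is a one-line appeal to induction on path length reducing to the single-arrow case read off from Table~\ref{tabArrowTypes}. Your write-up simply makes explicit the additivity/telescoping that the paper leaves implicit, and your side remark about $j-i$ being a genuine integer (not merely a residue) is accurate but not something the paper pauses on.
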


\begin{proof}
The result follows by induction from the single arrow case, which is apparent from Table \ref{tabArrowTypes}.
\end{proof}

\subsection{The representations}
Our main object of study is a family of finite dimensional representations $M$ of the Jacobian algebra $A$ of a Gale-Robinson quiver $Q$.  We are interested in $M$ satisfying several increasingly strict conditions.  First suppose that $M$ is $\mathbb{Z}^4$-graded, that is that it can be decomposed
\begin{displaymath}
M = \bigoplus_{\lambda \in \mathbb{Z}^4} M_{\lambda}
\end{displaymath}
so that if $\pi$ is a path and $x \in M_{\lambda}$ then $\pi x \in M_{\wt(\pi)+\lambda}$.  Say $M$ is \emph{multiplicity free} if
\begin{displaymath}
\dim M_{\lambda} \leq 1
\end{displaymath}
for all $\lambda \in \mathbb{Z}^4$.  

For any graded representation $M$, call $S = \{\lambda \in \mathbb{Z}^4 : \dim M_{\lambda}>0\}$ the \emph{degree set} of $M$.  The vector space $M_v = e_vM$ (where $e_v$ is a lazy path) over each vertex $v \in V$ will also be graded with graded pieces equal to $M_{v,\lambda} = e_vM_{\lambda}$.  Let $S_v$ be the degree set of $M_v$, i.e.
\begin{displaymath}
S_v = \{\lambda \in \mathbb{Z}^4 : \dim M_{v,\lambda}>0\}.
\end{displaymath}
Then $S = S_1 \cup \ldots \cup S_N$, and in the case that $M$ is multiplicity free this union will be disjoint.

\begin{defin} \label{defCalibrated}
Suppose $M$ is a multiplicity free representation of $Q$ with degree set $S$.  Say that $M$ is \emph{calibrated} at level $t \in \mathbb{Z}$ if 
\begin{enumerate}
\item $S_v \subseteq \{\lambda \in \mathbb{Z}^4 : a\lambda_1 + b\lambda_2 - c\lambda_3 - d\lambda_4 = t+v\}$ for all $v=1,2,\ldots, N$, and
\item there is a basis $\{f_{\lambda}: \lambda \in S\}$ of $M$ with each $f_{\lambda} \in M_{\lambda}$ such that 
\begin{equation} \label{eqCalibrated}
\pi f_{\lambda} = \begin{cases}
f_{\wt(\pi)+\lambda}, & \textrm{if } \wt(\pi)+\lambda \in S_v \\
0, & \textrm{otherwise}
\end{cases}
\end{equation}
for each path $\pi$ from $u$ to $v$ and $\lambda \in S_u$.
\end{enumerate}
\end{defin}

Fix $t$ and let $X_t$ be the partially ordered set
\begin{displaymath}
X_t = \{\lambda \in \mathbb{Z}^4 : t+1 \leq a\lambda_1 + b\lambda_2 - c\lambda_3 - d\lambda_4 \leq t+N\}
\end{displaymath}
with the order defined componentwise.  Let $S \subseteq X_t$.  Say $S$ is connected if the corresponding induced subgraph of the Hasse diagram of $X_t$ is connected.  Say $S$ is closed under intervals if for all $x,z \in S$ and $y \in X_t$,
\begin{displaymath}
x \leq y \leq z \Longrightarrow y \in S.
\end{displaymath}

\begin{prop} \label{propMSt}
There exists a calibrated representation of degree set $S \subseteq X_t$ if and only if $S$ is finite and closed under intervals.  Any two calibrated representations with the same degree set $S$ are isomorphic.
\end{prop}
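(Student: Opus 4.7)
The plan is to prove three statements in turn: necessity of the two conditions on $S$, existence of $M$ given them, and uniqueness up to isomorphism. A key ingredient throughout will be a converse to Lemma~\ref{lemWeights}: for every $\mu \in \mathbb{Z}_{\geq 0}^4$ and vertices $u, v \in \{1, \ldots, N\}$ satisfying $v - u = a\mu_1 + b\mu_2 - c\mu_3 - d\mu_4$, there exists a path in $Q$ from $u$ to $v$ of weight exactly $\mu$. I expect this sub-lemma to be the main technical obstacle; the plan is to prove it by induction on $|\mu| := \mu_1 + \mu_2 + \mu_3 + \mu_4$, at each step selecting an outgoing arrow whose weight is componentwise $\leq \mu$ and whose head still admits a valid residual target, leveraging the dimer structure of $Q$ summarized in Table~\ref{tabArrowTypes}.

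For necessity, finiteness of $S$ is immediate since $M$ is finite-dimensional and multiplicity free. To verify closure under intervals, take $x, z \in S$ and $x \leq y \leq z$ with $y \in X_t$, and write $u, v, w$ for the vertices picked out by calibrated condition~(1). Using the sub-lemma, choose paths $\pi_1 \colon u \to v$ of weight $y - x$ and $\pi_2 \colon v \to w$ of weight $z - y$. If $y \notin S_v$ then condition~(2) of Definition~\ref{defCalibrated} forces $\pi_1 f_x = 0$ and hence $(\pi_2 \pi_1) f_x = 0$; but $\pi_2 \pi_1$ is a path from $u$ to $w$ of weight $z - x$ and $z \in S_w$, so condition~(2) also yields $(\pi_2 \pi_1) f_x = f_z \neq 0$, a contradiction.

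For sufficiency, given $S$ finite and closed under intervals, set $M := \bigoplus_{\lambda \in S} \field f_\lambda$ with each $f_\lambda$ placed in $M_{v(\lambda)}$, and declare each arrow $\alpha$ of weight $e$ to act by $\alpha f_\lambda = f_{\lambda + e}$ if $\lambda + e \in S$ and by $0$ otherwise. Multiplicity-freeness together with conditions~(1) and~(2) of Definition~\ref{defCalibrated} are built into this construction, so the only substantive check is that the Jacobian relations coming from $W$ are respected. Each such relation equates two paths $\pi_1, \pi_2$ with common endpoints and common weight $\mu$, because every face of the torus embedding has boundary weight $(1,1,1,1)$ (directly verifiable from Table~\ref{tabArrowTypes}), making $W$ homogeneous. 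Applied to $f_\lambda$: if $\lambda + \mu \notin S$ both outputs vanish; if $\lambda + \mu \in S$, then each partial weight $\lambda + \nu$ (with $0 \leq \nu \leq \mu$) lies between $\lambda$ and $\lambda + \mu$ componentwise and, by Lemma~\ref{lemWeights}, lies in $X_t$, so closure under intervals places it in $S$ and both paths yield $f_{\lambda + \mu}$.

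For uniqueness, if $M, M'$ are calibrated with common degree set $S$ and calibrated bases $\{f_\lambda\}$ and $\{f'_\lambda\}$, the linear map $\phi \colon f_\lambda \mapsto f'_\lambda$ is a graded bijection $M \to M'$ commuting with every arrow action, since calibrated condition~(2) prescribes identical actions on $f_\lambda$ and $f'_\lambda$ for any path in $Q$. Hence $\phi$ is an isomorphism of representations.
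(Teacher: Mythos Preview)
Your proof is correct and follows essentially the same approach as the paper; in particular, your key sub-lemma is precisely Proposition~\ref{proplambda2pi}, proved there by the same induction on $|\mu|$. One minor point of exposition: condition~(2) of Definition~\ref{defCalibrated} for paths of length $>1$ is not literally ``built in'' to an arrow-by-arrow construction---it requires the interval-closed argument you give in your Jacobian paragraph---and the paper organizes things accordingly, first checking that \eqref{eqCalibrated} is compatible with path concatenation (using interval-closedness) and then observing that the Jacobian relations hold because the two paths in each relation have equal weight.
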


In the above context, let $M(S,t)$ denote the calibrated representation with degree set $S$ at level $t$.

\begin{rmk}
Note that if $S \subseteq X_t$ is closed under intervals and $S'$ is any translation of $S$ in $\mathbb{Z}^4$ then $M(S,t) \cong M(S',t')$ for appropriate $t'$.  In particular, one could always arrange to have $t=0$, but it will be convenient to allow $t$ to vary.
\end{rmk}

\subsection{The main result}
Derksen, Weyman, and Zelevinsky developed a theory of mutation of representations of quivers with potential \cite{DWZ1,DWZ2} that is tailored to the study of cluster algebras.  Non-initial cluster variables correspond in a certain way to representations of $(Q,W)$ related by mutation to simple representations.  We believe that an interesting class of such representations exist among the calibrated representations.  Although small examples can be found by trial and error, there is a systematic procedure to produce potentially infinite families.

Let $(Q,W)$ be a Gale-Robinson quiver with potential as before.  Let $\mu_1$ denote mutation at vertex $1$ and let $\rho$ be the operation that adds one to each vertex label modulo $N$.  On the level of quivers with potential $\mu_1$ is only defined up to right-equivalence, but with appropriate choices $\rho^{-1}(\mu_1(Q,W)) = (Q,W)$.  Given a representation of $(Q,W)$, then, $\rho^{-1} \circ \mu_1$ maps it to another such representation.  Let $\Theta = \rho^{-1} \circ \mu_1$ as a map on the set of representations of $(Q,W)$.  It follows from the general theory that if $M$ corresponds to a cluster variable then so will $\Theta(M)$.  

\begin{prop} \label{proptauGraded}
Suppose $M$ is a graded representation with degree set $S$.  Then $\Theta(M)$ is also graded with a degree set $S'$ satisfying $S'_v = S_{v+1}$ for $v=1,\ldots, N-1$.  Moreover, if
\begin{displaymath}
S_v \subseteq \{\lambda \in \mathbb{Z}^4 : a\lambda_1 + b\lambda_2 - c\lambda_3 - d\lambda_4 = t+v\}
\end{displaymath}
for $v=1,\ldots, N$ then
\begin{displaymath}
S'_N \subseteq \{\lambda \in \mathbb{Z}^4 : a\lambda_1 + b\lambda_2 - c\lambda_3 - d\lambda_4 = t+N+1\}.
\end{displaymath}
\end{prop}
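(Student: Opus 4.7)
The plan is to handle $\Theta = \rho^{-1} \circ \mu_1$ one vertex class at a time. For the $N-1$ vertices other than $1$, DWZ mutation leaves the vector space $M_v$ intact together with its $\mathbb{Z}^4$-grading, and the arrows among these vertices (either old arrows, new composites, or 2-cycle cancellations) behave additively with respect to weights. The relabeling $\rho^{-1}$ then identifies old vertex $v+1$ with new vertex $v$ for $v = 1, \ldots, N-1$, immediately giving $S'_v = S_{v+1}$ and the level condition $a\lambda_1 + b\lambda_2 - c\lambda_3 - d\lambda_4 = t + v + 1 = (t+1) + v$.

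For the mutated vertex, I would first argue that the DWZ construction respects the grading. The potential $W$, as a signed sum of face cycles of the dimer embedding, is homogeneous of weight $(1,1,1,1)$: the weights of the arrows around each face add to $(1,1,1,1)$, as a short case check using Table \ref{tabArrowTypes} confirms. Hence its cyclic partial derivatives are homogeneous, and the three maps comprising the mutation data at vertex $1$ (the combined incoming action $\bigoplus_{\beta: j \to 1} M_j \to M_1$, the combined outgoing action $M_1 \to \bigoplus_{\alpha: 1 \to \ell} M_\ell$, and the potential-induced map between them) are all graded of prescribed degrees. The new space attached to vertex $1$ in $\mu_1(M)$, obtained by kernel, image, cokernel, and direct sum operations on graded vector spaces and graded maps, therefore inherits a $\mathbb{Z}^4$-grading; moreover the new arrows of $\mu_1(Q)$ adjacent to vertex $1$ act homogeneously with the weights dictated by the identification $\mu_1(Q) = \rho(Q)$, i.e., the weights of the arrows of $Q$ adjacent to vertex $N$.

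To pin down the level of $S'_N$, fix any arrow $\epsilon: u \to N$ of $\Theta(Q) = Q$ with $u < N$, set $\lambda = \wt(\epsilon)$, and take any $\nu \in S'_u = S_{u+1}$. If $\epsilon f_\nu \neq 0$, it contributes to $(\Theta(M))_N$ in degree $\nu + \lambda$, whose level is
\begin{displaymath}
(a\nu_1 + b\nu_2 - c\nu_3 - d\nu_4) + (a\lambda_1 + b\lambda_2 - c\lambda_3 - d\lambda_4) = (t + u + 1) + (N - u) = t + N + 1
\end{displaymath}
by Lemma \ref{lemWeights}. Because the $\mathbb{Z}^4$-grading at the new vertex is rigidly determined once the weights of its incident arrows in $Q$ are prescribed, every element of $S'_N$ must satisfy the same level equation; a symmetric calculation using an outgoing arrow from $N$ gives the same answer.

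The main obstacle I anticipate is the grading-transfer step: verifying in detail that the potential-induced map is homogeneous of the expected degree (a consequence of the homogeneity of $W$, but the signs and cyclic rotation in $\partial W$ require care) and that the reversed and composite arrows introduced by $\mu_1$ carry exactly the weights predicted by $\mu_1(Q) \cong \rho(Q)$. Once this bookkeeping is in hand, the degree count above finishes the proof.
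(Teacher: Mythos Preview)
Your plan matches the paper's proof in structure: both reduce to showing that the DWZ mutation data $\alpha,\beta,\gamma$ at vertex $1$ decompose into graded blocks, so that $M'_1$ inherits a $\mathbb{Z}^4$-grading and the new arrows $e^*,w^*,s^*,n^*$ are homogeneous of the weights predicted by $\rho^{-1}\mu_1(Q)=Q$. The paper isolates this as a separate lemma (your ``grading-transfer step'') and carries out exactly the bookkeeping you anticipate.

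There is one soft spot in your level argument for $S'_N$. You compute the level of $\epsilon f_\nu$ for an incoming arrow $\epsilon$ and then assert that ``the $\mathbb{Z}^4$-grading at the new vertex is rigidly determined once the weights of its incident arrows are prescribed.'' This is not automatic: a graded piece $M'_{N,\lambda}$ on which every incident arrow of $Q$ acts trivially would not have its degree constrained by your arrow calculation at all, and nothing in the DWZ formalism rules such pieces out a priori. The paper sidesteps this by arguing directly from the block description
\[
M'_{1,\lambda}=\frac{\ker\gamma_\lambda}{\im\beta_\lambda}\oplus\im\gamma_\lambda\oplus\frac{\ker\alpha_\lambda}{\im\gamma_\lambda},
\]
observing that $M'_{1,\lambda}\neq 0$ forces one of the six ambient pieces $M_{1,\lambda-(1,1,0,0)}$, $M_{1+a,\lambda-(0,1,0,0)}$, $M_{1+b,\lambda-(1,0,0,0)}$, $M_{1+c,\lambda+(0,0,0,1)}$, $M_{1+d,\lambda+(0,0,1,0)}$, $M_{1,\lambda+(0,0,1,1)}$ to be nonzero, and each of these places $\lambda$ at level $t+N+1$. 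Your route can be salvaged by adding the observation that the explicit formulas for $e^*,w^*,s^*,n^*$ make at least one of them nonzero on every nonzero $M'_{1,\lambda}$ (e.g.\ $\pi\rho$ surjects onto $\ker\gamma_\lambda/\im\beta_\lambda$ and $\iota,\iota'\sigma$ are injective), but the paper's direct inspection of the six pieces is cleaner and avoids this detour.
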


In light of Proposition \ref{proptauGraded}, it is plausible that if $M$ is calibrated at level $t$ then $\Theta(M)$ will be calibrated at level $t+1$.  It turns out an extra condition is needed to ensure that $\Theta(M)$ be multiplicity free.

\begin{defin}
Fix $t \in \mathbb{Z}$.  Say that $S \subseteq X_t$ is \emph{sturdy} if for each $\lambda \in \mathbb{Z}^4$ with $a\lambda_1 + b\lambda_2 - c\lambda_3 - d\lambda_4 = t+1$
\begin{itemize}
\item $\lambda + (1,0,0,0) \in S$ and $\lambda + (0,1,0,0) \in S$ imply $\lambda \in S$, and
\item $\lambda - (0,0,1,0) \in S$ and $\lambda - (0,0,0,1) \in S$ imply $\lambda \in S$.
\end{itemize}
\end{defin}

\begin{thm} \label{thmMain}
Suppose $S \subseteq X_t$ is finite, interval-closed, connected, and sturdy.  Let $M = M(S,t)$ be the corresponding representation of $(Q,W)$, which is calibrated at level $t$.  Then $\Theta(M)$ is calibrated at level $t+1$.  
\end{thm}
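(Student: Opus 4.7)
Plan: I would verify that $\Theta(M)$ satisfies Definition \ref{defCalibrated} at level $t+1$ by breaking it into its three ingredients — the $\mathbb{Z}^4$-grading with vertex-wise linear constraint, multiplicity-freeness, and the existence of a basis satisfying \eqref{eqCalibrated}. Proposition \ref{proptauGraded} already delivers the grading, so the work concentrates on the latter two items. Since DWZ mutation at vertex $1$ leaves every other vector space of the representation unchanged, $\Theta(M)_v = M_{v+1}$ for $v = 1, \ldots, N-1$; multiplicity-freeness and the calibrated path action on any path avoiding vertex $N$ at these vertices are therefore inherited directly from $M$ via $\rho^{-1}$. The substance of the theorem is entirely at vertex $N$ of $\Theta(M)$, which is the mutated vertex $1$ of $M$.

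At that vertex I would set up the maps $\alpha\colon M_{\mathrm{in}} \to M_1$, $\beta\colon M_1 \to M_{\mathrm{out}}$, and the potential-induced map $\gamma\colon M_{\mathrm{out}} \to M_{\mathrm{in}}$ explicitly with respect to the calibrated basis of $M$. Calibratedness renders each matrix entry either a basis vector or $0$, with the choice determined combinatorially by whether specific translates of a degree $\lambda$ lie in $S$. Invoking the DWZ recipe \cite{DWZ1} for representation mutation, I would then compute $\dim \Theta(M)_{N,\lambda}$ at each $\lambda$ with $a\lambda_1+b\lambda_2-c\lambda_3-d\lambda_4 = t+N+1$ as an explicit function of which in- and out-neighbours of $\lambda$ lie in $S$. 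The two bullets in the definition of sturdy are designed precisely to force this dimension to be at most $1$: the first bullet controls the rank interaction between $\beta$ and the East/West out-arrow pair at vertex $1$, and the second bullet plays the analogous role for $\alpha$ and the South/North in-arrow pair. This establishes multiplicity-freeness of $\Theta(M)$.

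Having secured multiplicity-freeness at vertex $N$, I would construct the calibrated basis $\{f'_\lambda\}$ by taking $f'_\lambda := f_\lambda$ for $v \neq N$ and, at vertex $N$, selecting a canonical representative of each nonzero graded piece produced by the mutation — a preimage of $f_\lambda$ under $\alpha$ when $\lambda \in S_1$, or a generator of $\ker\alpha$ modulo $\im\gamma$ otherwise. The path action identity \eqref{eqCalibrated} then splits into three kinds of checks: paths avoiding vertex $N$ (inherited from $M$), paths using a single reversed arrow at vertex $N$ (given by the DWZ-mutated arrow maps $\alpha^{*}, \beta^{*}$), and compositions of two arrows through vertex $N$ that become direct arrows in the mutated quiver (with structure constants encoded in the mutated potential). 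The interval-closedness and connectedness hypotheses on $S$ are what ensure that the chosen basis at vertex $N$ propagates consistently across $\Theta(M)$, with no isolated graded pieces and no sign inconsistencies arising from the choices of representatives.

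The hardest step will be the graded-piece analysis at vertex $N$: enumerating the combinatorial possibilities for which of the up to four in- and four out-neighbours of $\lambda$ in $X_t$ lie in $S$, matching each case to exactly one of the sturdy bullets, and correctly accommodating the two-cycle cancellations in $W$ that occur for particular parameter choices (as in the $(1,4,2)$ case noted in the remark following Figure~\ref{figQuiver156}). Once that combinatorial bookkeeping is in hand, the verification of \eqref{eqCalibrated} reduces to a direct check against Table~\ref{tabArrowTypes} for each arrow type, using Lemma~\ref{lemWeights} to keep track of how weights of paths relate source and target degrees.
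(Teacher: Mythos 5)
Your plan follows essentially the same route as the paper: use Proposition \ref{proptauGraded} for the grading, analyze the block maps $\alpha_\lambda,\beta_\lambda,\gamma_\lambda$ at the mutated vertex with the sturdy condition forcing $\dim\Theta(M)_{N,\lambda}\le 1$ (this is exactly the content of the paper's Proposition \ref{proptauMultFree}), and then construct a basis at vertex $N$ and check \eqref{eqCalibrated} one arrow at a time, with the substance residing in the arrows incident to $N$ and the new composite arrows. The paper organizes the final verification as a walk through the eight columns of a case table rather than by your tripartite split (kernel preimages versus quotient generators), and it does not separately invoke two-cycle cancellations here since those are already absorbed into the definition of $\Theta$, but these are presentational rather than substantive differences.
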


\section{Background} \label{secBackground}
\subsection{Cluster algebras and quivers with potential} \label{secDWZ}
\emph{Cluster algebras}, due to S. Fomin and A. Zelevinsky \cite{FZ1}, are certain subalgebras of rational functional fields generated by a recursively defined set of elements known as \emph{cluster variables}.  The starting point is a directed graph $Q$ on $N$ vertices known as a \emph{quiver}.  Assume for the moment that $Q$ does not have any loops or oriented $2$-cycles.  \emph{Quiver mutation} $\mu_k$ of $Q$ at a vertex $k$ produces a new quiver $Q'$ from the following steps
\begin{enumerate}
\item For each length two path $i \to k \to j$ add an arrow $i \to j$.
\item Reverse the orientations of all arrows having $k$ as an endpoint.
\item Cancel in pairs conflicting arrows $i \to j$ and $j \to i$ until no such pairs remain.
\end{enumerate}
A \emph{seed} is a quiver $Q$ together with vertex weights $x_1,\ldots, x_N$.  Quiver mutation at vertex $k$ can be extended to \emph{seed mutation} in which all variables other than $x_k$ are unchanged while $x_k$ is replaced with
\begin{displaymath}
x_k' = \frac{\prod_{i \to k} x_i + \prod_{k \to j} x_j}{x_k}.
\end{displaymath}
The product is over arrows of the initial quiver $Q$ counting multiplicity, so e.g. if there are $3$ arrows from $i$ to $k$ then $x_i^3$ appears in the first term of the numerator.

Starting from an initial seed $(Q, \textbf{x})$, a cluster variable is any variable of any seed reachable by a sequence of mutations.  The corresponding cluster algebra is the subalgebra of $\mathbb{Q}(x_1,\ldots, x_N)$ generated by the set of cluster variables.  The famed Laurent phenomenon \cite{FZ1} asserts that each cluster variable is in fact a Laurent polynomial of $x_1,\ldots, x_N$.  A fundamental problem, that is very difficult in general but that has seen plenty of progress in special cases, is to find combinatorial formulas for these Laurent polynomials.

Derksen, Weyman, and Zelevinsky \cite{DWZ1,DWZ2} developed a theory of quivers with potential in part as a tool to resolve several conjectures \cite{FZ2} concerning the structure of cluster algebras.  A \emph{potential} $W$ is a linear combination of cycles in a quiver $Q$.  We give an overview of the theory of quivers with potential here, although we postpone discussion of several details until the point in the paper at which they are needed.

For the purpose of quivers with potential one allows the quiver to have $2$-cycles, but the mutation rule $\mu_k(Q,W) = (Q',W')$ is only defined for $k$ not part of a $2$-cycle.  The first two steps of mutation are the same as the first two steps of quiver mutation above.  If $\beta\alpha$ is a length two path passing through $k$ (by convention arrows of a path are listed from right to left) let $[\beta\alpha]$ denote the corresponding arrow that got added in the first step.  If $\alpha$ is an arrow having $k$ as one of its endpoint then let $\alpha^*$ denote the reversed arrow which replaced $\alpha$ in the second step.  As for the potential $W$, the following modifications are made for each length two path $\beta \alpha$ passing through $k$:
\begin{itemize}
\item any occurrence of $\beta\alpha$ in any term of $W$ is replaced with the arrow $[\beta\alpha]$,
\item a new term $[\beta\alpha]\alpha^*\beta^*$ is added to $W$ with coefficient $1$.
\end{itemize}
The process that has been carried out so far is called \emph{premutation}.  

The second half of mutation is a process known as \emph{reduction} that modifies both the quiver and potential.  The effect on the quiver is to remove some of the $2$-cycles, so in nice cases the last step of classical quiver mutation is recovered.  The result of reduction (and hence of mutation) is only defined up to a relation on potentials known as \emph{right-equivalence}.  Roughly speaking, two potentials on a quiver are right-equivalent if it is possible to obtain the second from the first by a reversible substitution in which each arrow $\alpha$ from $i$ to $j$ is replaced by a linear combination of paths from $i$ to $j$. 

Throughout we work over the base field $\field$.  The path algebra $\field Q$ of $Q$ is the space of all linear combinations of paths of $Q$ with the product of two paths taken to be their concatenation if it is defined or $0$ otherwise.  Associated to $W$ is its \emph{Jacobian ideal} $J_W$ which is generated by the \emph{cyclic derivatives} of $W$ with respect to the arrows of $Q$.  The \emph{Jacobian algebra} is $A(Q,W) =  \field Q / J_W$.

\begin{rmk}
In \cite{DWZ1} the authors work over the completed path algebra in which infinite linear combinations of paths are allowed.  In examples coming from the dimer model, only finite combinations are ever needed so the above definitions are commonly employed \cite{B,MR}.
\end{rmk}

A \emph{representation} of a quiver with potential $(Q,W)$ is a finite dimensional representation $M$ of $A(Q,W)$.  Letting $e_i \in kQ$ be the lazy (i.e. length zero) path based at a vertex $i$, $M$ is a direct sum of the vector spaces $M_i = e_iM$.  The data of $M$ is captured in these vector spaces together with linear maps $M_{\alpha}:M_i \to M_j$ for each pair of vertices $i,j$ and arrow $\alpha$ from $i$ to $j$.  A \emph{decorated representation} is a representation $M$ together with a choice of $V = \oplus_i V_i$ where each $V_i$ is a vector space.  Let $i$ be a vertex of $Q$.  The \emph{simple positive} (resp. \emph{simple negative}) representation of $(Q,W)$ at $i$ is the one with $V=0$ (resp. $M=0$) and $\dim M_j = \delta_{i,j}$ (resp. $\dim V_j = \delta_{i,j}$).  In both cases all maps $M_{\alpha}$ are necessarily zero.

Let $z$ be a cluster variable in a cluster algebra coming from a quiver with potential.  Derksen, Weyman, and Zelevinsky \cite{DWZ2} associate a decorated representation $M_{(Q,\textbf{x})}(z)$ to each seed $(Q,\textbf{x})$ recursively as follows
\begin{itemize}
\item If $z$ is in the seed, say $z = x_i$, then $M_{(Q,\textbf{x})}(z)$ is the negative simple representation at $i$.
\item If $(Q,\textbf{x})$ and $(Q',\textbf{x}')$ are two seeds related by mutation at $k$ then
\begin{equation} \label{eqMz}
M_{(Q',\textbf{x}')}(z) = \mu_k(M_{(Q,\textbf{x})}(z))
\end{equation}
\end{itemize}
This definition employs a notion of mutation of representations of quivers with potential from \cite{DWZ1}.

Fix a quiver with potential $(Q,W)$.  By \cite{DWZ2}, one can define for each representation $M$ of $(Q,W)$
\begin{itemize}
\item a polynomial $F(M) \in \mathbb{Z}[y_1,\ldots, y_N]$,
\item a vector $g(M) \in \mathbb{Z}^N$, and
\item a nonnegative integer $E(M)$ known as the \emph{$E$-invariant}.
\end{itemize}
The only of these that is not explicitly computable in general is the $F$-polynomial given by
\begin{equation} \label{eqFPoly}
F(M) = \sum_{\textbf{e}\in \left(\mathbb{Z}_{\geq 0}\right)^N} \chi(Gr_{\textbf{e}}(M))y_1^{e_1}\cdots y_N^{e_N}
\end{equation}
where $\chi$ is the Euler characteristic and $Gr_{\textbf{e}}(M)$ is the quiver Grassmannian defined as the variety of subrepresentations of $M$ with dimension vector $\textbf{e}$.  In the case that $M =M_{(Q,\textbf{x})}(z)$ for $z$ a non-initial cluster variable these data satisfy
\begin{itemize}
\item $F(M)$ is the $F$-polynomial of $z$ as defined in \cite{FZ2},
\item $g(M)$ is the $g$-vector of $z$ as defined in \cite{FZ2},
\item $E(M) = 0$.
\end{itemize}
The first two of these imply that the Laurent expression for $z$ can be recovered from $M$ as
\begin{equation} \label{eqxFg}
z = x_1^{g_1}\cdots x_N^{g_N}F(\widehat{y}_1,\ldots, \widehat{y}_N)
\end{equation}
where $(g_1,\ldots, g_N) = g(M)$, $F = F(M)$, and 
\begin{equation} \label{eqyHat}
\widehat{y}_k = \frac{\prod_{k \to j} x_j}{\prod_{i \to k} x_i}
\end{equation}
both products being over arrows in $Q$.

\subsection{The dimer model on the torus}
The dimer model in physics is a theory built around a quiver $Q$ that is embedded on a torus in such a way that for each vertex, the arrows incident to the vertex alternate between incoming and outgoing.  Another way to describe this condition is that the boundary of each face is an oriented cycle.  The dual graph to $Q$ is always bipartite because any pair of neighboring faces of $Q$ includes one that is counterclockwise and one that is clockwise.

There is a potential $W = W(Q)$ associated to any such quiver defined as the sum of the boundaries of the faces of $Q$ with coefficient $\pm 1$ depending on orientation.  We employ the convention that counterclockwise faces have coefficient $+1$ and clockwise faces $-1$.  In general, a mutation $\mu_k$ of $Q$ will produce a quiver no longer on a torus except in the case when $k$ has degree $4$.  Mutation at degree $4$ vertices, also known as toric mutations, gives a way to move from one dimer model to another.

\begin{lem} \label{lemToricMutation}
Let $Q$ be a quiver on a torus, $W = W(Q)$, and let $(Q',W')$ be the result of either 1) premutation at a degree $4$ vertex not part of a $2$-cycle or 2) reduction.  Then $Q'$ is also on a torus and $W'$ is right-equivalent to $W(Q')$.
\end{lem}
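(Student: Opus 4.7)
The proof naturally splits into the two operations, with the common idea that each admits a local geometric incarnation on the torus embedding.

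\emph{Premutation at a degree-$4$ vertex $k$.} Because each vertex of a dimer model has arrows alternating in orientation, $k$ has exactly two incoming arrows $\alpha_1,\alpha_2$ and two outgoing arrows $\beta_1,\beta_2$. A small disk neighborhood $D$ of $k$ is divided by these four arrows into four angular sectors, each coinciding with part of a face of $Q$ whose boundary runs through $k$ along one of the length-two paths $\beta_j\alpha_i$; these four faces alternate CCW/CW around $k$. I would construct the embedding of $Q'$ by local surgery inside $D$: for each sector, draw a chord $[\beta_j\alpha_i]$ from the source of $\alpha_i$ to the target of $\beta_j$, then reverse the four arrows at $k$. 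The chord cuts its face into a small triangle near $k$ bounded by $[\beta_j\alpha_i]\alpha_i^*\beta_j^*$ and an outer region which becomes a face of $Q'$ with $\beta_j\alpha_i$ replaced by $[\beta_j\alpha_i]$. Outside $D$ nothing changes and the alternating-arrow condition persists at every vertex, so $Q'$ is still embedded on the torus.

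Next I would match the potentials. Summing signed boundaries of the faces of $Q'$ yields contributions from the unchanged faces of $Q$, from the four modified sectors, and from the four new triangles. Tracking orientations shows that each new triangle carries orientation opposite to that of the sector it fills, so its signed contribution to $W(Q')$ is $\pm[\beta_j\alpha_i]\alpha_i^*\beta_j^*$ depending on parent-sector orientation, whereas premutation adds this term with coefficient $+1$ uniformly. The discrepancy---a sign flip on exactly the two triangle terms whose parent sectors are CCW---is absorbed by a right-equivalence rescaling an appropriate subset of the reversed arrows $\alpha_i^*,\beta_j^*$ by $-1$. Each rescaling of $\alpha_i^*$ (resp.\ $\beta_j^*$) flips the signs of the two triangle terms indexed by $i$ (resp.\ $j$), so an elementary parity argument in $(\mathbb{F}_2)^4$ shows that any even-size sign flip of the four triangle terms is achievable; in particular the required two-element flip is. Since these reversed arrows appear in $W'$ only inside triangle terms, the rescaling leaves all other terms untouched, delivering $W'\sim W(Q')$.

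\emph{Reduction.} Recall that reduction passes from $(Q,W)$ to a right-equivalent direct sum of a trivial summand (a disjoint collection of $2$-cycles with potential) and a reduced summand, then discards the trivial part. My plan is to identify each $2$-cycle $\alpha\beta$ in $W$ with a bigon face of the dimer model: two arrows $\alpha\colon i\to j$ and $\beta\colon j\to i$ bounding a single face whose boundary cycle contributes $\alpha\beta$. The two faces $F_\alpha,F_\beta$ on the other sides of $\alpha,\beta$ (both of opposite orientation to the bigon) contribute terms $\alpha U$ and $\beta V$ to $W$, where $U\colon j\to i$ and $V\colon i\to j$ are the ``return paths'' of those faces. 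A DWZ change of variable of the form $\alpha\mapsto\alpha+V'$, $\beta\mapsto\beta+U'$ (with $V',U'$ determined by $U,V$ and the signs of $F_\alpha,F_\beta$) decouples $\alpha,\beta$ from the rest of $W$, leaving a reduced potential whose new term $\pm UV$ is precisely the boundary contribution of the face obtained by collapsing the bigon---deleting $\alpha,\beta$ and fusing $F_\alpha,F_\beta$ into a single face whose boundary runs along $U$ and $V$. Thus $W_{\mathrm{red}}\sim W(Q_{\mathrm{red}})$ and $Q_{\mathrm{red}}$ is still on the torus. The main obstacle I anticipate is the bookkeeping when several bigons form simultaneously and interact: one must check that the successive DWZ substitutions can be carried out in some order, commute up to right-equivalence, and yield a final reduced potential equal to $W(Q_{\mathrm{red}})$ regardless of ordering.
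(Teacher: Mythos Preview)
Your proposal is correct and follows essentially the same route as the paper. For premutation, the paper carries out exactly the local surgery you describe and fixes the two offending triangle signs by the specific rescaling $w^*\mapsto -w^*$, $s^*\mapsto -s^*$; your $(\mathbb{F}_2)^4$ parity argument is a mild abstraction of the same move. For reduction, the paper does precisely your bigon-collapse substitution ($\alpha\mapsto\alpha+\pi'$, $\beta\mapsto\beta+\pi$ in its notation), and your stated concern about interacting bigons is handled there by straightforward induction on the number of bigon faces: after collapsing one bigon the resulting potential is again $W(Q')$ on the nose, so the hypothesis is restored and no ordering or commutation analysis is needed.
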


\begin{proof}
First consider premutation at a vertex $v$ of degree $4$.  Part (1) of Figure \ref{figtau} shows the local picture of such a quiver with $v=1$, neighboring arrows $n,e,s,w$, and $\pi_1,\ldots, \pi_4$ the paths needed to complete the four faces containing $v$.  Part (2) of the figure makes clear how to embed the result $Q'$ of premutation on the torus.  The potentials $W'$ and $W(Q')$ differ only in the sign of two faces.  Specifically, $[es]s^*e^*$ and $[wn]n^*w^*$ are clockwise so they should have sign $-1$ but actually have sign $+1$ in $W'$.  The right-equivalence $w^* \leftarrow -w^*$, $s^* \leftarrow -s^*$ fixes these two signs without affecting any other terms of the potential.

Now instead consider applying reduction to $(Q,W)$.  The proof is by induction on the number of faces of $Q$ whose boundaries are $2$-cycles.  If there are no such faces then all terms of $W$ have degree at least $3$ so $(Q,W)$ is reduced.  Otherwise, suppose $\beta\alpha$ is a $2$-cycle bounding a counterclockwise face (the clockwise case is similar) of $Q$.  Let $\pi$ and $\pi'$ be the paths so that $\pi \alpha$ and $\pi' \beta$ are the clockwise faces of $Q$ containing $\alpha$ and $\beta$ respectively (see Figure \ref{figReduce}).  Then $W = \beta\alpha - \pi\alpha - \pi'\beta + \ldots$ where the remaining terms do not involve $\alpha$ or $\beta$.  Apply the right-equivalence 
\begin{align*}
\alpha &\leftarrow \alpha + \pi' \\
\beta &\leftarrow \beta + \pi
\end{align*}
The result is
\begin{align*}
\widehat{W} &= (\beta + \pi)(\alpha + \pi') - \pi(\alpha + \pi') - \pi'(\beta + \pi) + \ldots \\
&= \beta\alpha - \pi'\pi + \ldots
\end{align*}
The first step of reduction produces $Q'$ which is $Q$ with the arrows $\alpha, \beta$ removed and $W' = \widehat{W} - \beta\alpha$.  As $\pi'\pi$ is a clockwise face of $Q'$ outside of which nothing has changed, $W' = W(Q')$.  Any remaining $2$-cycle faces can be removed in the same way.
\end{proof}

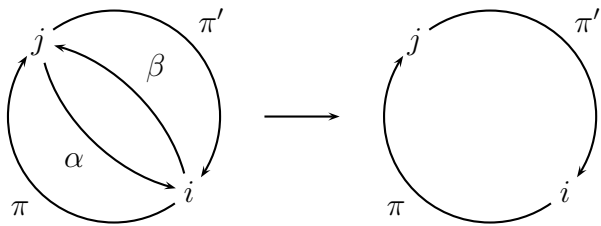
\begin{figure}
\begin{pspicture}(9,4)
\rput(3,1){\rnode{vi}{$i$}}
\rput(1,3){\rnode{vj}{$j$}}
\ncarc[nodesep=3pt,arcangle=30]{<-}{vi}{vj}
\Aput{$\alpha$}
\ncarc[nodesep=3pt,arcangle=30]{<-}{vj}{vi}
\Aput{$\beta$}
\psarc{<-}(2,2){1.41}{-35}{125}
\uput[ur](3,3){$\pi'$}
\psarc{<-}(2,2){1.41}{145}{-55}
\uput[dl](1,1){$\pi$}
\psline{->}(4,2)(5,2)
\rput(5,0){
\rput(3,1){\rnode{vi}{$i$}}
\rput(1,3){\rnode{vj}{$j$}}
\psarc{<-}(2,2){1.41}{-35}{125}
\uput[ur](3,3){$\pi'$}
\psarc{<-}(2,2){1.41}{145}{-55}
\uput[dl](1,1){$\pi$}
}
\end{pspicture}
\caption{A single step of reduction applied to a quiver on a torus}
\label{figReduce}
\end{figure}

\begin{cor} \label{corToricMutation}
Let $Q$ be a quiver on a torus, $W = W(Q)$, and $k$ a vertex of $Q$ of degree $4$ that is not part of a $2$-cycle.  Then
$(Q',W') = \mu_k(Q,W)$ has $Q'$ also on a torus and $W' = W(Q')$ up to right equivalence.
\end{cor}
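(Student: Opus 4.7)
The plan is to decompose mutation into its two constituent steps and assemble the two parts of Lemma \ref{lemToricMutation}. Write $(Q_1,W_1)$ for the result of premutation at $k$ applied to $(Q,W)$, so that $(Q',W')$ is obtained from $(Q_1,W_1)$ by reduction. Since $k$ has degree $4$ and is not part of a $2$-cycle, premutation at $k$ is well-defined.

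By part (1) of Lemma \ref{lemToricMutation}, the quiver $Q_1$ embeds on a torus and $W_1$ is right-equivalent to the canonical toric potential $W(Q_1)$. The goal is then to carry this right-equivalence through the reduction step. Here I would invoke the standard fact from \cite{DWZ1} that reduction respects right-equivalence: if $(Q_1,W_1)$ and $(Q_1,W(Q_1))$ are right-equivalent, then their reductions are right-equivalent as quivers with potential (in particular, have the same underlying quiver up to renaming, and right-equivalent potentials). Applying part (2) of Lemma \ref{lemToricMutation} to $(Q_1,W(Q_1))$, the reduction produces a pair whose quiver is on a torus and whose potential is the canonical one. Transporting the right-equivalence back yields $(Q',W')$ with $Q'$ on a torus and $W' \sim W(Q')$.

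The only substantive point to verify is the interplay between right-equivalence and reduction in the toric setting. The $2$-cycles that reduction removes from $Q_1$ are precisely those bounding digonal faces, which depend only on $Q_1$ itself, not on the chosen representative of the right-equivalence class of the potential. So the underlying quiver $Q'$ produced by reduction is unambiguous, and the remaining potential, after pruning the $2$-cycle terms, inherits the right-equivalence from the previous step. This is the main obstacle in principle, but in practice it is handled either by direct appeal to the DWZ splitting theorem or by running the inductive argument in the proof of Lemma \ref{lemToricMutation} (part 2) starting from $W_1$ rather than $W(Q_1)$, since the change-of-variable arguments used there (such as $\alpha \leftarrow \alpha+\pi'$, $\beta \leftarrow \beta+\pi$) only require that $W_1$ contain $\beta\alpha$ with unit coefficient, which is preserved under right-equivalences that fix $\alpha$ and $\beta$.
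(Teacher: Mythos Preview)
Your proposal is correct and matches the paper's intended argument: the paper states the corollary without proof, treating it as an immediate consequence of the two parts of Lemma~\ref{lemToricMutation} applied in succession (premutation then reduction). Your discussion of why the right-equivalence obtained after premutation survives reduction is in fact more careful than the paper, which leaves this point implicit; invoking the DWZ splitting theorem (that reduction is well-defined on right-equivalence classes) is the standard and cleanest way to close that gap.
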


The cyclic derivative of an arrow $\alpha$ in this setting is $\pi - \pi'$ where $\pi$ and $\pi'$ are paths such that $\pi\alpha$ and $\pi'\alpha$ are the faces of $Q$ containing $\alpha$.  The Jacobian ideal is generated by these cyclic derivatives.  Interest in the dimer model stems from the fact that the Jacobian algebra is always a non-commutative crepant resolution of its center which is a three-dimensional affine toric algebra \cite{B}.

\subsection{Gale-Robinson quivers}
Fix positive integers $a,c,N$ with $a < N$, $c < N$, and $\gcd(a,c,N) = 1$.  The Gale-Robinson sequence $\ldots, x_1, x_2, \ldots$ is defined by the recurrence
\begin{equation} \label{eqGaleRobinson}
x_{i}x_{i+N} = x_{i+a}x_{i+N-a} + x_{i+c}x_{i+N-c}
\end{equation}
for $i \in \mathbb{Z}$.  These sequences exhibit the Laurent phenomenon: each $x_i$ is a Laurent polynomial in the initial data $x_1, \ldots, x_N$.  Combinatorial formulas for Gale-Robinson sequences have been described in various manners \cite{S,BPW,JMZ} and serve as inspiration for the current paper.  

Assuming the two terms on the right hand side of \eqref{eqGaleRobinson} are distinct (i.e. $c \notin \{a,N-A\}$),
there is a quiver with the property that the terms of the sequence are among the corresponding cluster variables.  More precisely, the $x_i$ with $i>N$ are generated by the mutation sequence $1,2,\ldots, N, 1,2 , \ldots$ while the $x_i$ with $i \leq 0$ are generated by the mutation sequence $N, N-1, \ldots, 1, N, N-1 \ldots$.  The quivers in question are precisely the Gale-Robinson quivers defined in the introduction except with any $2$-cycles removed.  

We leave the $2$-cycles intact and allow for $c \in \{a,N-a\}$.  As explained in the introduction, the quiver $Q$ embeds on a torus, so the theory of the dimer model applies.  The corresponding toric three-fold is the affine cone over an $L(a,b,c)$ singularity \cite{E}, where $b = N-a$.

\begin{ex}
We can assume without loss of generality that $a \leq c \leq N/2$.  Some small examples include
\begin{itemize}
\item $(a,b,c) = (1,1,1)$: the \emph{conifold} quiver
\item $(a,b,c) = (1,2,1)$: the \emph{suspended pinch point} quiver
\item $(a,b,c) = (1,3,2)$: the \emph{Somos-$4$} (a.k.a. del Pezzo-1) quiver
\item $(a,b,c) = (1,4,2)$: the \emph{Somos-$5$} quiver
\end{itemize}
\end{ex}

Let $Q$ be a Gale-Robinson quiver and $W = W(Q)$.

\begin{prop} \label{propAGraded}
The Jacobian algebra $A = A(Q,W)$ has a $\mathbb{Z}^4$ grading so that each individual arrow lies in a graded piece as described in the last column of Table \ref{tabArrowTypes}.
\end{prop}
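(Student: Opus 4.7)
The plan is to define the $\mathbb{Z}^4$-grading on the path algebra $\mathbb{C}Q$ by setting $\deg \alpha := \wt(\alpha)$ from Table \ref{tabArrowTypes} and extending additively to paths, and then to verify that the Jacobian ideal $J_W$ is homogeneous so that the grading descends to $A = \mathbb{C}Q/J_W$. Since $J_W$ is generated by the cyclic derivatives $\partial_\alpha W$, and since $W$ is the signed sum of the face-boundary cycles of the toric embedding, it suffices to show that every face cycle of $Q$ is $\mathbb{Z}^4$-homogeneous of a single fixed weight, which I claim is $(1,1,1,1)$.

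Lifting any face of $Q$ to $\tilde Q$ produces a closed loop in the plane, so Lemma \ref{lemWeights} forces its weight $\lambda$ to satisfy $\lambda_1 - \lambda_2 = 0$ and $\lambda_4 - \lambda_3 = 0$; thus $\lambda = (k,k,\ell,\ell)$ for some nonnegative integers $k,\ell$. To pin down $k = \ell = 1$, I would use that every arrow of $\tilde Q$ is either a unit compass vector between adjacent lattice points or a diagonal of a single unit lattice square, so each bounded face of $\tilde Q$ is confined to one unit lattice square. It is therefore either the full square bounded by one E, one N, one W, and one S arrow, or one of the two triangles obtained by slicing the square along a diagonal. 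A direct check using Table \ref{tabArrowTypes} then shows the full square has weight $(1,0,0,0)+(0,1,0,0)+(0,0,1,0)+(0,0,0,1)=(1,1,1,1)$, and each of the four triangle types (one per diagonal direction, e.g.\ the triangle with sides NE, S, W of weights $(1,0,0,1), (0,0,1,0), (0,1,0,0)$) likewise sums to $(1,1,1,1)$.

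Once $W$ is known to be homogeneous of degree $(1,1,1,1)$, each $\partial_\alpha W$ is automatically homogeneous of degree $(1,1,1,1) - \wt(\alpha)$, so $J_W$ is homogeneous and the grading descends to $A$ with arrows in the prescribed graded pieces. I expect the main obstacle to be the geometric step of showing that no bounded face of $\tilde Q$ spans more than one unit lattice square; this requires checking, case by case on the congruence class $v \equiv ax + cy \pmod N$ of a lifted vertex, that the local outgoing and incoming arrows always suffice to partition each unit square into only the face shapes listed above. The remaining algebraic steps are routine.
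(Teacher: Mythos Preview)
Your proposal is correct and follows essentially the same route as the paper: define the grading on $\mathbb{C}Q$ by the arrow weights, then verify every face cycle has weight $(1,1,1,1)$ so that each generator $\partial_\alpha W$ of $J_W$ is homogeneous. The paper simply asserts the classification of faces (unit square or triangle within a unit square) and does the same direct check from Table~\ref{tabArrowTypes}; your preliminary use of Lemma~\ref{lemWeights} to get $\lambda=(k,k,\ell,\ell)$ is a pleasant but unnecessary detour, and the geometric ``obstacle'' you flag is treated in the paper as an immediate consequence of the explicit description of $\tilde Q$ rather than something requiring a separate case analysis.
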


\begin{proof}
The weights from the table define a $\mathbb{Z}^4$ grading on $\field Q$ where the weight of a path equals the sum of the weights of its arrows.  As $A = \field Q / J_W$, we must show that the Jacobian ideal is homogeneous.  A typical generator is $\pi - \pi'$ where $\pi\alpha$ and $\pi'\alpha$ are faces of $Q$, so we need to show $\wt(\pi) = \wt(\pi')$ in this case.  This condition is equivalent to $\wt(\pi\alpha) = \wt(\pi'\alpha)$ so it suffices to show that the boundary of each face of $Q$ has the same weight.  Each face is either a quadrilateral with one arrow in each main compass direction or a triangle with two perpendicular arrows in main compass directions from which the third direction is determined (e.g. North followed by East must be closed up by Southwest).  One can check using Table \ref{tabArrowTypes} that in all cases the total weight of the boundary is $(1,1,1,1)$.
\end{proof}

\begin{rmk}
Any Jacobian algebra coming from the dimer model is graded by a lattice $\Lambda$, see \cite{B} and \cite{MR}.  In the case of Gale-Robinson quivers the resulting grading is related to the $\mathbb{Z}^4$-grading just defined.  The $\mathbb{Z}^4$-grading itself can be obtained more conceptually via a description of the Jacobian algebra in terms of the ring of invariants of a certain group action on polynomials in four variables \cite{E}.
\end{rmk}

Now fix a representation $M$ of $(Q,W)$ that is $\mathbb{Z}^4$-graded.  Viewing $M$ as a vector space, the torus $T = (\mathbb{C}^*)^4$ acts in a natural way.  Specifically, a given $z \in T$ acts on each graded piece $M_{\lambda}$ of $M$ by multiplication by the scalar $z_1^{\lambda_1}z_2^{\lambda_2}z_3^{\lambda_3}z_4^{\lambda_4}$.  We explain how this action extends to an action on the quiver Grassmannians of $M$.  There is a very similar discussion of a torus action on Hilbert schemes in Section 2 of \cite{MR}, although we are not sure if there is a direct connection.  

\begin{prop}
Let $P \subseteq M$ be a subrepresentation and $z \in T$.  Then $zP \subseteq M$ is also a subrepresentation.
\end{prop}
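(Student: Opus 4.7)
The plan is to verify directly that $zP$ is closed under the action of every element of the Jacobian algebra $A$. Since $A$ is generated as a $\field$-algebra by the idempotents $e_v$ and the arrows of $Q$, it suffices to check closure under each of these.

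The main observation is a ``twisted commutation'' identity. For any arrow $\alpha$ of weight $\mu = \wt(\alpha)$ and any element $m \in M$ (not assumed to be homogeneous), I claim that
\[
\alpha(zm) = z^{-\mu} \cdot z(\alpha m),
\]
where $z^{-\mu} = z_1^{-\mu_1} z_2^{-\mu_2} z_3^{-\mu_3} z_4^{-\mu_4}$ is a nonzero scalar. To verify this, decompose $m = \sum_\lambda m_\lambda$ with $m_\lambda \in M_\lambda$. Then $zm = \sum_\lambda z^\lambda m_\lambda$, so $\alpha(zm) = \sum_\lambda z^\lambda (\alpha m_\lambda)$. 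On the other hand, $\alpha m_\lambda \in M_{\lambda + \mu}$, hence $z(\alpha m_\lambda) = z^{\lambda + \mu}(\alpha m_\lambda) = z^\mu \cdot z^\lambda (\alpha m_\lambda)$. Summing over $\lambda$ gives the identity.

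With this in hand, the conclusion is quick. Applied to $m = p \in P$, the identity yields $\alpha(zp) = z^{-\mu} \cdot z(\alpha p)$; since $P$ is a subrepresentation, $\alpha p \in P$, so $z(\alpha p) \in zP$, and multiplication by the nonzero scalar $z^{-\mu}$ keeps it in $zP$. For the idempotents, both $e_v$ and $z$ act diagonally with respect to the bigraded decomposition $M = \bigoplus_{v,\lambda} M_{v,\lambda}$ ($e_v$ has eigenvalues in $\{0,1\}$, while $z$ has eigenvalue $z^\lambda$ on $M_{v,\lambda}$), so they commute, and $e_v(zp) = z(e_v p) \in zP$. The only mild subtlety is that $P$ itself need not be a graded subspace of $M$, so one cannot argue by tracking weights inside $P$; however, the identity above is a genuine equality of linear endomorphisms of all of $M$, which is precisely what is needed to avoid any homogeneity assumption on $P$.
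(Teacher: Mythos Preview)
Your proof is correct and follows essentially the same approach as the paper: both decompose an arbitrary $p\in P$ into homogeneous pieces and use the weight shift of an arrow $\alpha$ to obtain the identity $\alpha(zp)=z^{-\wt(\alpha)}\,z(\alpha p)$, from which closure of $zP$ under $\alpha$ is immediate. Your version is slightly more explicit in also checking closure under the idempotents $e_v$, but the core computation is the same.
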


\begin{proof}
The content of this Proposition is that if $p \in P$ and $\alpha$ is an arrow of $Q$ then $M_{\alpha}(zp) = zp'$ for some $p' \in P$.  We can write $p = \sum_{\lambda} x_{\lambda}$ with each $x_{\lambda} \in M_{\lambda}$.  Then
\begin{align*}
M_{\alpha}(zp) &= M_{\alpha}\left(\sum_{\lambda} z^{\lambda}x_{\lambda}\right) \\
&= \sum_{\lambda} z^{\lambda}M_{\alpha}(x_{\lambda}) \\
&= z^{-\mu}\sum_{\lambda} z^{(\lambda+\mu)}M_{\alpha}(x_{\lambda})
\end{align*}
where $\mu = \wt(\alpha)$.  Now $M_{\alpha}(x_{\lambda})$ lies in degree $\lambda + \mu$ so we obtain $M_{\alpha}(zp) = zp'$ where
\begin{displaymath}
p' = z^{-\mu}\sum_{\lambda} M_{\alpha}(x_{\lambda})
\end{displaymath}
\end{proof}

It is easy to see that $zP$ and $P$ above have the same dimension vectors, so we obtain an action of the torus on $Gr_e(M)$ for any given $e \in \mathbb{Z}_{\geq 0}^N$.  As explained in \cite{MR}, we can restrict to torus fixed points when calculating the Euler characteristic.  As such we can rewrite \eqref{eqFPoly} as
\begin{equation} \label{eqFPolyFix}
F(M) = \sum_{\textbf{e}} \chi(Gr_{\textbf{e}}(M)^T)y_1^{e_1}\cdots y_N^{e_N}.
\end{equation}
It is typically much easier to calculate the Euler characteristic of such a fixed space compared to that of the full quiver Grassmannian.  The following Proposition identifies what the fixed points are and should be compared to \cite[Theorem 2.4]{MR}.

\begin{prop} \label{propTFixed}
A subrepresentation $P$ of $M$ is fixed by $T$ if and only if $P$ is itself $\mathbb{Z}^4$-graded.
\end{prop}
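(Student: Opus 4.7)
The proposition has two directions, and the nontrivial one is the backward implication. I would first dispose of the easy direction, then treat the real content via a standard linear independence of characters argument.

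For the forward direction, assume $P$ is $\mathbb{Z}^4$-graded, so $P = \bigoplus_\lambda P_\lambda$ with $P_\lambda \subseteq M_\lambda$. Given $z \in T$ and $p \in P$, write $p = \sum_\lambda p_\lambda$ with $p_\lambda \in P_\lambda$. Then $zp = \sum_\lambda z^\lambda p_\lambda$, and each $z^\lambda p_\lambda$ still lies in $P_\lambda \subseteq P$, so $zp \in P$. Hence $zP \subseteq P$, and applying $z^{-1}$ gives equality.

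For the backward direction, suppose $zP = P$ for every $z \in T$. Take $p \in P$ and write $p = \sum_{\lambda \in F} x_\lambda$ with $F \subseteq \mathbb{Z}^4$ finite (possible since $M$ is finite dimensional) and each $x_\lambda \in M_\lambda$. The goal is to show $x_\lambda \in P$ for every $\lambda \in F$. For any $z \in T$,
\begin{displaymath}
zp = \sum_{\lambda \in F} z^\lambda x_\lambda \in P.
\end{displaymath}
Enumerate $F = \{\lambda^{(1)}, \ldots, \lambda^{(k)}\}$. The characters $z \mapsto z^{\lambda^{(j)}}$ of $T = (\mathbb{C}^*)^4$ are pairwise distinct and hence linearly independent as functions on $T$, so one can choose $z_1, \ldots, z_k \in T$ making the matrix $(z_i^{\lambda^{(j)}})_{i,j}$ invertible. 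Inverting this system expresses each $x_{\lambda^{(j)}}$ as a $\mathbb{C}$-linear combination of the vectors $z_i p$, which all lie in $P$. Therefore $x_{\lambda^{(j)}} \in P$ for all $j$, so $P = \bigoplus_\lambda (P \cap M_\lambda)$, which is the desired grading.

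The main (and only mild) obstacle is justifying that one can choose the $z_i$ so that the character matrix is invertible; this follows either by iteratively choosing $z_i$ to separate the remaining characters, or by invoking Artin's linear independence of characters for the algebraic torus. Everything else is formal manipulation of the grading.
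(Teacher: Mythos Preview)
Your proof is correct. The only quibble is terminological: in the statement ``fixed by $T$ if and only if $\mathbb{Z}^4$-graded'' the forward direction is fixed $\Rightarrow$ graded, which is the nontrivial one, so your labels are swapped; this has no effect on the mathematics.

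The paper argues the nontrivial direction differently. Rather than invoking linear independence of characters to invert a $k\times k$ Vandermonde-type system in one stroke, it runs a minimality argument: choose $p\in P$ whose decomposition $p=\sum_\lambda x_\lambda$ has the fewest possible components $x_\lambda\notin P$, and derive a contradiction. If only one component is bad it is the difference of $p$ and elements of $P$; if at least two components $x_\lambda,x_\mu$ are bad, pick $z\in T$ with $z^\lambda\neq z^\mu$ and form a linear combination of $p$ and $zp$ that kills $x_\lambda$ but not $x_\mu$, producing a smaller counterexample. Your approach packages the same idea more globally: linear independence of the characters $z\mapsto z^{\lambda^{(j)}}$ guarantees the evaluation matrix can be made invertible, which recovers every $x_{\lambda^{(j)}}$ at once. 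The paper's version is slightly more self-contained (no appeal to Artin or to the existence of an invertible evaluation matrix), while yours is shorter and makes the underlying reason transparent.
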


\begin{proof}
Each $p \in P$ can be written uniquely as $p = \sum_{\lambda} x_{\lambda}$ with the $x_{\lambda} \in M_{\lambda}$.  Suppose for the sake of contradiction that $P$ is fixed by the torus but not graded.  Then there exists $p$ as above such that $x_{\lambda} \notin P$ for at least one $\lambda \in \mathbb{Z}^4$.  Choose $p$ so that the number of such $\lambda$ is as small as possible.  If there were just one such $\lambda$ then
\begin{displaymath}
x_{\lambda} = p - \sum_{\lambda' \neq \lambda} x_{\lambda'} \in P
\end{displaymath}
a contradiction.  Otherwise there are $\lambda \neq \mu$ so that $x_{\lambda} \notin P$ and $x_{\mu} \notin P$.  Pick some $z \in T$ so that $z^{\lambda} \neq z^{\mu}$.  Then a linear combination $p'$ of $p$ and $zp$ can be chosen in such a way that the coefficient of $x_{\lambda}$ is $0$ but the coefficient of $x_{\mu}$ is not.  We have $p' \in P$ (since $P$ is fixed by $T$) and not all graded pieces of $p'$ are in $P$ (since $x_{\mu} \notin P$).  However $p'$ has fewer such pieces than $p$ since $x_{\lambda}$ does not appear, contradicting the minimality assumption.
\end{proof}

\section{Classification of calibrated representations} \label{secCalibrated}
We turn now to the classification of calibrated representations which was already stated in Proposition \ref{propMSt}.  In short, a calibrated representation is determined up to isomorphism by its degree set which must be an interval-closed subset of the poset $X_t$.  We will need the following result which together with the first part of Lemma \ref{lemWeights} characterizes weights of paths.

\begin{prop} \label{proplambda2pi}
Let $u,v \in \{1,2,\ldots, N\}$ and suppose $\lambda \in \mathbb{Z}_{\geq 0}^4$ with
\begin{displaymath}
a\lambda_1 + b\lambda_2 - c\lambda_3 - d\lambda_4 = v-u.
\end{displaymath}
Then there is a path $\pi$ in $Q$ from $u$ to $v$ with weigh $\lambda$.
\end{prop}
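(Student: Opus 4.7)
The plan is induction on $|\lambda| := \lambda_1 + \lambda_2 + \lambda_3 + \lambda_4$. The base case $|\lambda| = 0$ forces $\lambda = 0$ and $v = u$, witnessed by the lazy path $e_u$. For the inductive step, it suffices to produce a single arrow $\alpha$ out of $u$ with $\wt(\alpha) \leq \lambda$ componentwise: if $\alpha: u \to u'$, the residual $\lambda' := \lambda - \wt(\alpha) \in \mathbb{Z}_{\geq 0}^4$ has strictly smaller total, and Lemma \ref{lemWeights} gives $v - u' = a\lambda_1' + b\lambda_2' - c\lambda_3' - d\lambda_4'$, so the inductive hypothesis supplies a path from $u'$ to $v$ of weight $\lambda'$; prepending $\alpha$ finishes.

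The main work is exhibiting such an arrow via a case analysis on the support of $\lambda$ and the value of $u$, crucially using $1 \leq v \leq N$. First attempt a compass arrow whose nonzero weight lies in $\textrm{supp}(\lambda)$: take E if $\lambda_1 > 0$ and $u \leq b$, W if $\lambda_2 > 0$ and $u \leq a$, S if $\lambda_3 > 0$ and $u \geq c+1$, or N if $\lambda_4 > 0$ and $u \geq d+1$. If none of these applies, we show the existence condition of a suitable diagonal is forced.

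Suppose $\lambda_1 > 0$ but E is unavailable, so $u > b$. Then $v \leq N$ combined with $u \geq b+1$ gives $a\lambda_1 + b\lambda_2 - c\lambda_3 - d\lambda_4 \leq a - 1$, hence $c\lambda_3 + d\lambda_4 > 0$. If $\lambda_3 > 0$, failure of S forces $u \leq c$, and $u > b$ together with $u \leq c$ is exactly the existence condition for the SE arrow, whose weight $(1,0,1,0)$ lies below $\lambda$. If instead $\lambda_3 = 0$ and $\lambda_4 > 0$, the same reasoning delivers NE with weight $(1,0,0,1)$. The case $\lambda_1 = 0$, $\lambda_2 > 0$ is entirely analogous, producing SW or NW (using $u > a$ in place of $u > b$). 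Finally, if $\lambda_1 = \lambda_2 = 0$, then $\lambda_3 + \lambda_4 > 0$ and $v \geq 1$ gives $u \geq 1 + c\lambda_3 + d\lambda_4$; whichever of $\lambda_3$ or $\lambda_4$ is positive already forces $u \geq c+1$ or $u \geq d+1$, so the corresponding S or N was in fact available, contradicting the "no compass arrow" hypothesis. The only real obstacle is the bookkeeping to confirm the case split is exhaustive; the existence conditions for the four diagonals are arranged so that whenever a compass option fails for reasons of position, the geometry of $u$ relative to $a,b,c,d$ sits in exactly the right subinterval for some diagonal to step in.
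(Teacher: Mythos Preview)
Your proof is correct and follows essentially the same approach as the paper: induction on $|\lambda|$, with the inductive step producing a first arrow by case analysis, using the key inequality that if $\lambda_1>0$ but $u>b$ then $c\lambda_3+d\lambda_4>0$. The only difference is organizational: the paper picks one positive coordinate and works from there (trying the compass arrow, then falling back to the appropriate diagonal or a perpendicular compass arrow), whereas you first exhaust all compass options and then show a diagonal is forced; your explicit treatment of the $\lambda_1=\lambda_2=0$ case is a nice addition the paper leaves implicit.
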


\begin{proof}
The proof is by induction on $\lambda_1 + \ldots + \lambda_4$.  If $\lambda = (0,0,0,0)$ then take the length $0$ path at $u=v$.  Now suppose at least one $\lambda_i > 0$.  For concreteness we look at the case $\lambda_1 > 0$.  If $u \leq N-a$ then there is an East arrow from $u$ to $u+a$ of weight $(1,0,0,0)$.  The induction hypothesis then supplies a path from $u+a$ to $v$ of weight $\lambda - (1,0,0,0)$.  If $u > N-a$ then we have
\begin{displaymath}
c\lambda_3 + d\lambda_4 = u-v + a\lambda_1 + b\lambda_2 > (N-a) + (-N) + a(1) = 0
\end{displaymath}
so one of $\lambda_3$ or $\lambda_4$ is positive, say $\lambda_3$.  The first arrow of the path can then be taken to be a South arrow from $u$ to $u-c$ or a Southeast arrow from $u$ to $u+a-c$ depending on if $u>c$.
\end{proof}

\begin{proof}[Proof of Proposition \ref{propMSt}]
Uniqueness is straightforward.  We only deal in finite dimensional representations which explains why $S$ must be finite.  Given finite $S$, we can define $M$ to be an abstract vector space with basis $\{f_\lambda : \lambda \in S\}$.  The first question is whether  \eqref{eqCalibrated} extended by linearity gives an action of the path algebra of $Q$ on $M$.  The condition to check is
\begin{displaymath}
(\pi'\pi)f_{\lambda} = \pi'(\pi f_{\lambda})
\end{displaymath}
for a pair of paths $\pi$ and $\pi'$.  Both sides are $0$ unless $\lambda \in S_u$, $\pi$ goes from $u$ to $v$, and $\pi'$ goes from $v$ to $w$.  Both sides are also $0$ if $\lambda + \wt(\pi) + \wt(\pi') \notin S_w$.  The only way for the condition to fail is if $\lambda + \wt(\pi) \notin S_v \subseteq S$ but $\lambda + \wt(\pi) + \wt(\pi') \in S_w \subseteq S$.  If $S$ is closed under intervals then this is impossible and we have an action of $\field Q$.  If not then let $\lambda \leq \mu \leq \nu$ with $\lambda, \nu \in S$ and $\mu \notin S$.  Then Proposition \ref{proplambda2pi} applied to $\mu - \lambda$ and $\nu - \mu$ gives paths $\pi$ and $\pi'$ violating the needed condition.

It remains to show that if $S$ is closed under intervals then the action of $\field Q$ on $M$ descends to an action of the Jacobian algebra.  As explained in the proof of Proposition \ref{propAGraded}, if $\pi'-\pi$ is one of the standard generators of the Jacobian ideal then $wt(\pi)=wt(\pi')$.  Therefore $\pi$ and $\pi'$ act identically on $M$ as desired.
\end{proof}

\section{The action of $\Theta$} \label{sectau}
\subsection{Definition of $\Theta$}
We now fill in the details for the definition of $\Theta$.  Let $Q$ be a Gale-Robinson quiver corresponding to the parameters $a,b,c,d$ with $N = a+b = c+d$.  The vertex $1$ always has degree $4$ with outgoing arrows to $1+a$ and $1+b$ and incoming arrows from $1+c$ and $1+d$.  We place the assumption $\{a,b\} \neq \{c,d\}$ in this Section so that $1$ is not part of a $2$-cycle.  Hence by Corollary \ref{corToricMutation} we can perform the mutation $\mu_1$.  In total, $\Theta$ can be broken down into five steps which are pictured in Figure \ref{figtau}.

\begin{figure}
\begin{pspicture}(12,18)
\psset{labelsep=3pt}
\rput(0,12){
\rput(.5,5.5){$(1)$}
\rput(3,3){\rnode{v}{$1$}}
\rput(5,3){\rnode{va}{$1+a$}}
\rput(1,3){\rnode{vb}{$1+b$}}
\rput(3,5){\rnode{vc}{$1+c$}}
\rput(3,1){\rnode{vd}{$1+d$}}
\ncline[nodesep=3pt]{->}{v}{va}
\Aput{$e$}
\ncline[nodesep=3pt]{->}{v}{vb}
\Bput{$w$}
\ncline[nodesep=3pt]{->}{vc}{v}
\Aput{$s$}
\ncline[nodesep=3pt]{->}{vd}{v}
\Bput{$n$}
\ncarc[nodesep=3pt,arcangle=-45]{->}{va}{vc}
\Bput{$\pi_1$}
\ncarc[nodesep=3pt,arcangle=45]{->}{va}{vd}
\Aput{$\pi_2$}
\ncarc[nodesep=3pt,arcangle=45]{->}{vb}{vc}
\Aput{$\pi_3$}
\ncarc[nodesep=3pt,arcangle=-45]{->}{vb}{vd}
\Bput{$\pi_4$}
\rput(2,2){$+$}
\rput(4,2){$-$}
\rput(2,4){$-$}
\rput(4,4){$+$}
}
\rput(7,12){
\rput(.5,5.5){$(2)$}
\rput(3,3){\rnode{v}{$1$}}
\rput(5,3){\rnode{va}{$1+a$}}
\rput(1,3){\rnode{vb}{$1+b$}}
\rput(3,5){\rnode{vc}{$1+c$}}
\rput(3,1){\rnode{vd}{$1+d$}}
\ncline[nodesep=3pt]{->}{va}{v}
\Bput{$e^*$}
\ncline[nodesep=3pt]{->}{vb}{v}
\Aput{$w^*$}
\ncline[nodesep=3pt]{->}{v}{vc}
\Bput{$s^*$}
\ncline[nodesep=3pt]{->}{v}{vd}
\Aput{$n^*$}
\ncline[nodesep=3pt]{->}{vc}{va}
\Aput{$[es]$}
\ncline[nodesep=3pt]{->}{vc}{vb}
\Bput{$[ws]$}
\ncline[nodesep=3pt]{->}{vd}{va}
\Bput{$[en]$}
\ncline[nodesep=3pt]{->}{vd}{vb}
\Aput{$[wn]$}
\ncarc[nodesep=3pt,arcangle=-90]{->}{va}{vc}
\Bput{$\pi_1$}
\ncarc[nodesep=3pt,arcangle=90]{->}{va}{vd}
\Aput{$\pi_2$}
\ncarc[nodesep=3pt,arcangle=90]{->}{vb}{vc}
\Aput{$\pi_3$}
\ncarc[nodesep=3pt,arcangle=-90]{->}{vb}{vd}
\Bput{$\pi_4$}
\rput(2.3,2.3){$+$}
\rput(3.7,2.3){$+$}
\rput(2.3,3.7){$+$}
\rput(3.7,3.7){$+$}
\rput(1,2.2){$+$}
\rput(5,2.2){$-$}
\rput(1,3.8){$-$}
\rput(5,3.8){$+$}
}
\rput(0,6){
\rput(.5,5.5){$(3)$}
\rput(3,3){\rnode{v}{$1$}}
\rput(5,3){\rnode{va}{$1+a$}}
\rput(1,3){\rnode{vb}{$1+b$}}
\rput(3,5){\rnode{vc}{$1+c$}}
\rput(3,1){\rnode{vd}{$1+d$}}
\ncline[nodesep=3pt]{->}{va}{v}
\Bput{$e^*$}
\ncline[nodesep=3pt]{->}{vb}{v}
\Aput{$-w^*$}
\ncline[nodesep=3pt]{->}{v}{vc}
\Bput{$-s^*$}
\ncline[nodesep=3pt]{->}{v}{vd}
\Aput{$n^*$}
\ncline[nodesep=3pt]{->}{vc}{va}
\Aput{$[es]$}
\ncline[nodesep=3pt]{->}{vc}{vb}
\Bput{$[ws]$}
\ncline[nodesep=3pt]{->}{vd}{va}
\Bput{$[en]$}
\ncline[nodesep=3pt]{->}{vd}{vb}
\Aput{$[wn]$}
\ncarc[nodesep=3pt,arcangle=-90]{->}{va}{vc}
\Bput{$\pi_1$}
\ncarc[nodesep=3pt,arcangle=90]{->}{va}{vd}
\Aput{$\pi_2$}
\ncarc[nodesep=3pt,arcangle=90]{->}{vb}{vc}
\Aput{$\pi_3$}
\ncarc[nodesep=3pt,arcangle=-90]{->}{vb}{vd}
\Bput{$\pi_4$}
\rput(2.3,2.3){$-$}
\rput(3.7,2.3){$+$}
\rput(2.3,3.7){$+$}
\rput(3.7,3.7){$-$}
\rput(1,2.2){$+$}
\rput(5,2.2){$-$}
\rput(1,3.8){$-$}
\rput(5,3.8){$+$}
}
\rput(7,6){
\rput(.5,5.5){$(4)$}
\rput(3,3){\rnode{v}{$1$}}
\rput(5,3){\rnode{va}{$1+a$}}
\rput(1,3){\rnode{vb}{$1+b$}}
\rput(3,5){\rnode{vc}{$1+c$}}
\rput(3,1){\rnode{vd}{$1+d$}}
\ncline[nodesep=3pt]{->}{va}{v}
\Bput{$e^*$}
\ncline[nodesep=3pt]{->}{vb}{v}
\Aput{$-w^*$}
\ncline[nodesep=3pt]{->}{v}{vc}
\Bput{$-s^*$}
\ncline[nodesep=3pt]{->}{v}{vd}
\Aput{$n^*$}
\ncline[nodesep=3pt]{->}{vc}{vb}
\Bput{$[ws]$}
\ncline[nodesep=3pt]{->}{vd}{vb}
\Aput{$[wn]$}
\ncarc[nodesep=3pt,arcangle=45]{->}{vc}{va}
\Aput{$\pi_1'$}
\ncarc[nodesep=3pt,arcangle=-45]{->}{vd}{va}
\Bput{$\pi_2'$}
\ncarc[nodesep=3pt,arcangle=90]{->}{vb}{vc}
\Aput{$\pi_3$}
\ncarc[nodesep=3pt,arcangle=-90]{->}{vb}{vd}
\Bput{$\pi_4$}
\rput(2.3,2.3){$-$}
\rput(3.7,2.3){$+$}
\rput(2.3,3.7){$+$}
\rput(3.7,3.7){$-$}
\rput(1,2.2){$+$}
\rput(1,3.8){$-$}
}
\rput(0,0){
\rput(.5,5.5){$(5)$}
\rput(3,3){\rnode{v}{$N$}}
\rput(5,3){\rnode{va}{$a$}}
\rput(1,3){\rnode{vb}{$b$}}
\rput(3,5){\rnode{vc}{$c$}}
\rput(3,1){\rnode{vd}{$d$}}
\ncline[nodesep=3pt]{->}{va}{v}
\Bput{$e^*$}
\ncline[nodesep=3pt]{->}{vb}{v}
\Aput{$-w^*$}
\ncline[nodesep=3pt]{->}{v}{vc}
\Bput{$-s^*$}
\ncline[nodesep=3pt]{->}{v}{vd}
\Aput{$n^*$}

\ncarc[nodesep=3pt,arcangle=45]{->}{vc}{va}
\Aput{$\pi_1'$}
\ncarc[nodesep=3pt,arcangle=-45]{->}{vd}{va}
\Bput{$\pi_2'$}
\ncarc[nodesep=3pt,arcangle=-45]{->}{vc}{vb}
\Bput{$[ws]$}
\ncarc[nodesep=3pt,arcangle=45]{->}{vd}{vb}
\Aput{$[wn]$}
\rput(2.3,2.3){$-$}
\rput(3.7,2.3){$+$}
\rput(2.3,3.7){$+$}
\rput(3.7,3.7){$-$}
}
\psset{labelsep=5pt}
\end{pspicture}
\caption{A schematic of the operation $\Theta$.}
\label{figtau}
\end{figure}
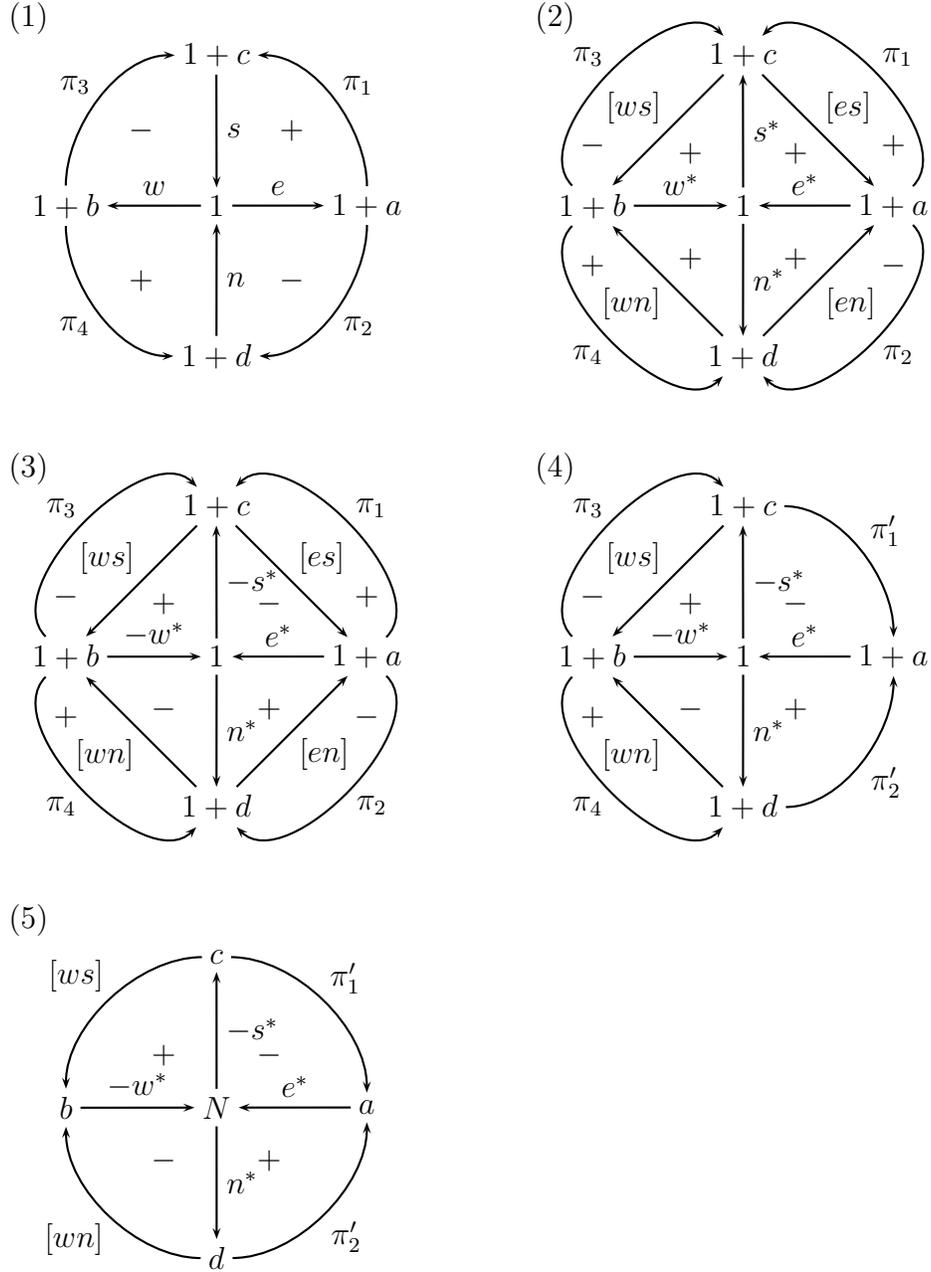

\begin{enumerate}
\item Begin with a triple $(Q,W,M)$ where $Q$ is the Gale-Robinson quiver, $W=W(Q)$, and $M$ is a representation of $(Q,W)$.  Let $e, w, s, n$ be the arrows incident to vertex $1$ as in Figure \ref{figtau} and use the same names for the corresponding maps of $M$.  For instance, $e$ is a linear from $e: M_1 \to M_{1+a}$.  Let $\pi_1,\ldots, \pi_4$ be the paths needed to complete the four faces touching vertex $1$, as in Figure \ref{figtau}.  Each $\pi_i$ is comprised of either a single diagonal arrow or  a pair of arrows, the first vertical and the second horizontal.  The label of a face indicates the sign with which its boundary cycle contributes to $W$.
\item Apply premutation at vertex $1$.  The four arrows incident to $1$ are replaced by their reversals, and four more arrows are added.  Note that it is no longer the case that the sign of each face in the potential is determined by its orientation.
\item Apply the right-equivalence that negates arrows $w^*$ and $s^*$ and fixes all other arrows.  This resolves the sign issue in the potential created during the previous step.
\item For each $\pi_i$ that is a single arrow, reduce out the $2$-cycle that it forms with one of the new arrows.  This reduction step can be performed in a way so that no arrows other than those in the $2$-cycle are modified and also the new face that is created appears in the potential with coefficient $\pm 1$ as dictated by its orientation (see the proof of Lemma \ref{lemToricMutation}).  For each $\pi_i$ that was eliminated, let $\pi_i'$ be the path in $Q$ so that $\pi_i'\pi_i$ is the face of $Q$ containing $\pi_i$ that does not include vertex $1$.  Such $\pi_i'$ always consists of a horizontal followed by a vertical arrow.  Figure \ref{figtau} depicts the case in which $\pi_1$ and $\pi_2$ are single arrows while $\pi_3$ and $\pi_4$ are not.  In fact, this situation can always be arranged by assuming $a<c\leq N/2$.
\item Subtract $1$ from each vertex label modulo $N$ obtaining $(Q,W,\Theta(M))$, the original quiver and potential with a new representation.  Note in the case of Figure \ref{figtau}, the paths $\pi_3$ and $\pi_4$ remain at the end but they are outside the area depicted in the last frame.
\end{enumerate}

We generally assume above that $M$ is an ordinary representation (or equivalently, a decorated representation with $V=0$).  Even in this case the outcome of mutation might have nonzero decoration.  One could allow for this possibility, but we simply declare $\Theta$ to be undefined if the result would be decorated.  In the case where $M$ is indecomposable, $\Theta$ will only be undefined for $M$ equal to the simple representation at $1$.

We review the definition of premutation of a representation \cite{DWZ1} restricted to the situation in step (2) where it is carried out (and ignoring the possibility that the outcome could be decorated as per the previous paragraph).  Recall the initial representation $M$ can be described by a vector space $M_v$ associated to each vertex of $Q$ together with a linear map associated to each arrow.  The result $M'$ of premutation at $1$ has $M_v' = M_v$ for all $v \neq 1$.  The vector space $M_1'$ is expressed in terms of a triangle of linear maps
\begin{align*}
\alpha &: M_{1+c} \oplus M_{1+d} \longrightarrow M_1 \\
\beta &: M_1 \longrightarrow M_{1+a} \oplus M_{1+b}  \\
\gamma &: M_{1+a} \oplus M_{1+b} \longrightarrow M_{1+c} \oplus M_{1+d}
\end{align*}
as
\begin{displaymath}
M_1' = \frac{\ker \gamma}{im \beta} \oplus im \gamma \oplus \frac{\ker \alpha}{im \gamma}.
\end{displaymath}
The maps $\alpha$ and $\beta$ are built respectively out of the incoming and outgoing arrows to vertex $1$.  In matrix form
\begin{align*}
\alpha &= \left[ \begin{array}{cc} s & n \\ \end{array} \right] \\
\beta &= \left[ \begin{array}{c} e \\ w \\ \end{array} \right] 
\end{align*}
Meanwhile, $\gamma$ is built out of ``second derivatives'' of the potential 
\begin{displaymath}
W = \pi_1es - \pi_2en - \pi_3ws + \pi_4wn + \ldots
\end{displaymath}
with respect to length two paths passing through $1$.  The result is
\begin{equation} \label{eqgamma}
\gamma = \left[\begin{array}{cc} \pi_1 & -\pi_3 \\ -\pi_2 & \pi_4 \\ \end{array} \right].
\end{equation}

The linear maps associated to the arrows not touching vertex $1$ are unaffected by premutation.  The four new diagonal arrows each get assigned a composition of two linear maps in the manner suggested by the naming convention.  For instance $[es] = e \circ s$.  It remains to define $e^*$, $w^*$, $s^*$, and $n^*$ as linear maps.  These maps depend on a choice of splitting data, a pair of linear maps
\begin{align*}
\rho &: M_{1+a} \oplus M_{1+b} \to \ker \gamma \\
\sigma &: \ker \alpha / \im \gamma \to \ker \alpha 
\end{align*}
such that $\rho$ restricts to the identity on $\ker \gamma$ and $\sigma$ sends each coset to one of its representatives.  Given such a choice
\begin{align}
\left[\begin{array}{cc} e^* & w^* \end{array}\right]
&= \left[\begin{array}{c} - \pi \rho \\ -\gamma \\ 0 \\ \end{array} \right] \label{eqalphaStar}\\
\left[\begin{array}{c} s^* \\ n^*\end{array}\right] 
&= \left[\begin{array}{ccc} 0 & \iota & \iota'\sigma \end{array} \right] \label{eqbetaStar}
\end{align}
where 
\begin{displaymath}
\pi : \ker \gamma \to \ker \gamma / \im \beta
\end{displaymath}
is projection and
\begin{align*}
\iota &: \im \gamma \to M_{1+c} \oplus M_{1+d} \\
\iota' &: \ker \alpha \to M_{1+c} \oplus M_{1+d}
\end{align*}
are inclusions.

\subsection{Compatibility of $\Theta$ with grading}
The next order of business is to prove Proposition \ref{proptauGraded}, which asserts that $\Theta$ maps graded representations to graded representations.  The proof boils down to tracing the grading through all the auxiliary maps of the construction.  We keep all of the notation from the previous subsection.
\begin{lem} \label{lemNewGrading}
Let $M$ be a graded representation of $(Q,W)$ and let $(Q',W',M')$ be the result of premutation at vertex $1$.  There is a $\mathbb{Z}^4$-grading on $M_1'$ which together with the gradings already in place for $M_v'=M_v$, $v \neq 2$ has the property that all arrows of $Q'$ act by a constant shift of degree.  The shift associated to $e^*$ is $(0,1,0,0)$, i.e. $e^*(M'_{1+a,\lambda}) \subseteq M_{1,\lambda+(0,1,0,0)}'$.  The shifts associated to $w^*, s^*, n^*$ are $(1,0,0,0)$, $(0,0,0,1)$, and $(0,0,1,0)$ respectively.  
\end{lem}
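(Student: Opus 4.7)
The plan is to construct the grading on $M_1'$ by reverse-engineering it from the premutation formulas, leveraging the observation established in the proof of Proposition \ref{propAGraded} that every face of the Gale-Robinson quiver has total weight $(1,1,1,1)$.

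First I would endow the source and target of $\gamma$ with auxiliary gradings chosen to make the triangle of maps $\alpha, \beta, \gamma$ homogeneous. Declare the degree-$\mu$ piece of $M_{1+a} \oplus M_{1+b}$ to be $M_{1+a,\mu} \oplus M_{1+b,\, \mu + (1,-1,0,0)}$, where the shift $(1,-1,0,0) = \wt(e) - \wt(w)$ is exactly what is needed to align the shifts of $e$ and $w$; a short calculation then gives $\beta$ the shift $(1,0,0,0)$. Similarly, grade $M_{1+c} \oplus M_{1+d}$ using the shift $(0,0,-1,1) = \wt(n) - \wt(s)$ on the $M_{1+d}$ summand, which makes $\alpha$ homogeneous of shift $(0,0,1,0)$. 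With these choices, each of the four entries $\pi_i$ in the matrix \eqref{eqgamma} picks up the \emph{same} shift $(0,1,0,1)$. This is precisely where the face-weight identity $\wt(\pi_i) + \wt(e_{i_1}) + \wt(e_{i_2}) = (1,1,1,1)$ enters: it is what forces the four entries to be compatible, so that $\gamma$ becomes homogeneous as a single map.

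Because $\alpha$ and $\gamma$ are homogeneous, the subspaces $\ker\gamma$, $\im\beta \subseteq M_{1+a}\oplus M_{1+b}$ and $\ker\alpha$, $\im\gamma \subseteq M_{1+c}\oplus M_{1+d}$ are graded, and one may choose graded splittings $\rho$ and $\sigma$ (any graded subspace of a graded vector space admits a graded complement). I would then define the $\mathbb{Z}^4$-grading on
\[
M_1' = \ker\gamma / \im\beta \,\oplus\, \im\gamma \,\oplus\, \ker\alpha / \im\gamma
\]
as the direct sum of the three inherited subquotient gradings, shifted overall by $(0,1,0,0)$ on the first summand and by $(0,0,0,-1)$ on each of the latter two.

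The final step is to verify the four claimed shifts by tracing degrees through formulas \eqref{eqalphaStar} and \eqref{eqbetaStar}. For instance, $e^*$ sends $x \in M_{1+a,\lambda}$ to $(-\pi\rho(x,0),\, -\gamma(x,0),\, 0)$; the first component sits in $\ker\gamma/\im\beta$ at inherited degree $\lambda$, hence at degree $\lambda + (0,1,0,0)$ in $M_1'$, while the second component sits in $\im\gamma$ at inherited degree $\lambda + (0,1,0,1)$, hence again at degree $\lambda + (0,1,0,0)$ in $M_1'$ after the $(0,0,0,-1)$ shift. The analogous checks for $w^*, s^*, n^*$ are parallel. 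The main obstacle is purely bookkeeping --- keeping the internal shifts on the two direct sums distinct from the overall shifts on the three summands of $M_1'$ without getting confused --- rather than any conceptual difficulty. The one non-routine ingredient is the face-weight relation, without which no consistent single shift for $\gamma$ (and hence no $\mathbb{Z}^4$-grading on $M_1'$ compatible with $e^*, w^*, s^*, n^*$) could exist.
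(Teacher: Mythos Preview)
Your approach is essentially the same as the paper's: the paper indexes everything by $\lambda$ and writes out block maps $\alpha_\lambda,\beta_\lambda,\gamma_\lambda$ with
\[
\beta_\lambda : M_{1,\lambda-(1,1,0,0)} \to M_{1+a,\lambda-(0,1,0,0)} \oplus M_{1+b,\lambda-(1,0,0,0)},\qquad
\gamma_\lambda : \,\cdots\, \to M_{1+c,\lambda+(0,0,0,1)} \oplus M_{1+d,\lambda+(0,0,1,0)},
\]
which is exactly your ``auxiliary grading'' picture, and then defines $M'_{1,\lambda}$ as the three-term sum of the corresponding subquotients and chooses $\rho,\sigma$ to respect the blocks. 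Your explicit highlighting of the face-weight identity $(1,1,1,1)$ as the reason the four entries of $\gamma$ acquire a common shift is a nice touch that the paper leaves implicit (having already used it in Proposition~\ref{propAGraded}).

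One bookkeeping slip to fix: with your convention that the degree-$\mu$ piece of $M_{1+a}\oplus M_{1+b}$ is $M_{1+a,\mu}\oplus M_{1+b,\mu+\delta}$, homogeneity of $\beta$ forces $\delta=\wt(w)-\wt(e)=(-1,1,0,0)$, not $\wt(e)-\wt(w)$; similarly the shift on $M_{1+d}$ should be $\wt(s)-\wt(n)=(0,0,1,-1)$. With your stated signs $\gamma$ is \emph{not} homogeneous (the $(2,1)$ entry $\pi_2$ would land in a different degree than the $(1,1)$ entry $\pi_1$). Fortunately your final overall shifts $(0,1,0,0)$ and $(0,0,0,-1)$ on the three summands of $M_1'$ are correct and match the paper's grading, so the error is confined to those two intermediate displacements.
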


\begin{proof}
The only difficulty is to define the grading on $M_1'$ and to understand the four arrows of $Q'$ incident to $1$.  The maps $\alpha$, $\beta$, and $\gamma$ can be broken up into blocks
\begin{align*}
\alpha_{\lambda} : M_{1+c,\lambda+(0,0,0,1)} \oplus M_{1+d,\lambda+(0,0,1,0)} &\longrightarrow M_{1,\lambda+(0,0,1,1)} \\
\beta_{\lambda} : M_{1,\lambda-(1,1,0,0)} &\longrightarrow M_{1+a,\lambda-(0,1,0,0)} \oplus M_{1+b,\lambda-(1,0,0,0)}  \\
\gamma_{\lambda} : M_{1+a,\lambda-(0,1,0,0)} \oplus M_{1+b,\lambda-(1,0,0,0)} &\longrightarrow M_{1+c,\lambda+(0,0,0,1)} \oplus M_{1+d,\lambda+(0,0,1,0)}
\end{align*}
For example, the two components of $\alpha$ are $s$ and $n$ which have weight $(0,0,1,0)$ and $(0,0,0,1)$ respectively.  It follows that
\begin{align*}
s(M_{1+c,\lambda+(0,0,0,1)}) &\subseteq M_{1,\lambda+(0,0,1,1)} \\
n(M_{1+d,\lambda+(0,0,1,0)}) &\subseteq M_{1,\lambda+(0,0,1,1)}
\end{align*}
from which we can build $\alpha_{\lambda}$.  

The blocks line up so that the compositions $\gamma_{\lambda} \circ \beta_{\lambda}$ and $\alpha_{\lambda} \circ \gamma_{\lambda}$ exist.  Letting
\begin{displaymath}
M_{1,\lambda}' = \frac{\ker \gamma_{\lambda}}{\im \beta_{\lambda}} \oplus \im \gamma_{\lambda} \oplus \frac{\ker \alpha_{\lambda}}{\im \gamma_{\lambda}}
\end{displaymath}
we can consider $M_1'$ to be a graded vector space as
\begin{displaymath}
M_1' \cong \bigoplus_{\lambda \in \mathbb{Z}^4} M_{1,\lambda}'.
\end{displaymath}
Obtaining the desired property for $e^*, w^*, s^*, n^*$ relative to this grading requires making a good choice for the splitting data.  Specifically, we can always arrange to choose $\rho$ and $\sigma$ so that they decompose into blocks
\begin{align*}
\rho_{\lambda} &: M_{1+a,\lambda-(0,1,0,0)} \oplus M_{1+b,\lambda-(1,0,0,0)} \to \ker \gamma_{\lambda} \\
\sigma_{\lambda}&: \ker \alpha_{\lambda} / \im \gamma_{\lambda} \to \ker \alpha_{\lambda}
\end{align*}

By \eqref{eqalphaStar} the nontrivial part of $\left[\begin{array}{cc} e^* & w^*\end{array} \right]$ is given by a pair of maps
\begin{align*}
-\pi \circ \rho &: M_{1+a} \oplus M_{1+b} \to \ker \gamma / \im \beta \\
-\gamma &: M_{1+a} \oplus M_{1+b} \to \im \gamma
\end{align*}
These can be broken down into
\begin{align}
-\pi_{\lambda} \circ \rho_{\lambda} &: M_{1+a,\lambda-(0,1,0,0)} \oplus M_{1+b,\lambda-(1,0,0,0)} \to \ker \gamma_{\lambda} / \im \beta_{\lambda} \subseteq M_{1,\lambda}' \label{eqewStar1}\\
-\gamma_{\lambda} &: M_{1+a,\lambda-(0,1,0,0)} \oplus M_{1+b,\lambda-(1,0,0,0)} \to \im \gamma_{\lambda} \subseteq M_{1,\lambda}' \label{eqewStar2}
\end{align}
where $\pi_{\lambda}: \ker \gamma_{\lambda} \to \ker \gamma_{\lambda} / \im \beta_{\lambda}$ is projection.  Therefore $e^*$ and $w^*$ act on degree by adding $(0,1,0,0)$ and $(1,0,0,0)$ respectively.  A similar argument shows $s^*$ and $n^*$ act by adding $(0,0,0,1)$ and $(0,0,1,0)$ respectively.
\end{proof}

\begin{proof}[Proof of Proposition \ref{proptauGraded}]
Let $(Q,W,M)$ and $(Q',W',M')$ be as before.  Let $M'' = \Theta(M)$.  In short, we are focusing on the representations at frames (1), (2), and (5) of Figure \ref{figtau}.  Steps (3) and (4) do not change any of the vector spaces, and step (5) merely permutes them, so 
\begin{displaymath}
M_v'' = M_{v+1}' = M_{v+1}
\end{displaymath}
for $v=1,\ldots, N-1$ and $M_N'' = M_1'$.  The arrows of $M''$ all correspond to arrows of $M'$, differing by at most a sign which does not affect grading.  By Lemma \ref{lemNewGrading}, each of these arrows acts on graded pieces with a constant shift of degree.  To show that $M''$ is a graded representation of the Jacobian algebra $A$, it remains to show that the degree shift for each arrow agrees with its weight in $\mathbb{Z}^4$.  

Recall that the weight of an arrow is determined by its cardinal direction as per Table \ref{tabArrowTypes}.  As $e^*$ points West (see Figure \ref{figtau}) we have $\wt(e^*) = (0,1,0,0)$ which is in accordance with the result of Lemma \ref{lemNewGrading}.  The weights for $w^*$, $s^*$, and $n^*$ also all match up.  Next consider any composite arrow created in step (2) that survives to step (5), for concreteness let's say $[ws]$.  Recall $[ws]$ acts on $M'$ as the path $ws$ acts on $M$, which is to say
\begin{displaymath}
[ws](M'_{1+c,\lambda}) = w(s(M_{1+c,\lambda})) \subseteq M_{1+b,\lambda+(0,1,1,0)} = M'_{1+b,\lambda+(0,1,1,0)},
\end{displaymath}
so
\begin{displaymath}
[ws](M''_{c,\lambda}) \subseteq M''_{b,\lambda+(0,1,1,0)} = M''_{b,\lambda + \wt([ws])}.
\end{displaymath}
Lastly, every arrow of $Q$ the is untouched through the mutation process winds up corresponding to a different arrow of $Q$ at the end (with $1$ subtracted from each endpoint).  However, the compass direction and hence the weight stay the same as does the associated linear map.

What remains are the statements about degree sets.  Let $S$ be the degree set of $M$, and to better match the notation of this proof, let $S''$ be the degree set of $\Theta(M)$.  Since $M''_v = M_{v+1}$ for $v \neq 1$ we have $S''_v = S_{v+1}$ in this case.  Now consider $M''_N = M'_1$.  Its graded piece $M'_{1,\lambda}$ is built from the maps $\alpha_{\lambda}, \beta_{\lambda}, \gamma_{\lambda}$.  For the piece to be nonzero, one out of the six spaces
\begin{displaymath}
M_{1,\lambda-(1,1,0,0)}, M_{1+a,\lambda-(0,1,0,0)}, M_{1+b,\lambda-(1,0,0,0)}, M_{1+c,\lambda+(0,0,0,1)}, M_{1+d,\lambda+(0,0,1,0)} ,M_{1,\lambda+(0,0,1,1)}
\end{displaymath}
involved in these maps must be nonzero.  To sum up
\begin{align*}
S_{N}'' \subseteq &(S_1 + (1,1,0,0)) \cup (S_{1+a} + (0,1,0,0)) \cup (S_{1+b} + (1,0,0,0) \\
&\cup (S_{1+c} - (0,0,0,1)) \cup (S_{1+d} - (0,0,1,0)) \cup (S_1 - (0,0,1,1))
\end{align*}
The result concerning the value of $a\lambda_1 + b\lambda_2 - c\lambda_3 - d\lambda_4$ follows easily.
\end{proof}

\subsection{The calibrated case} Let $M = M(S,t)$ be a calibrated representation.  By Proposition \ref{proptauGraded}, $\Theta(M)$ is graded so it makes sense to ask if it is calibrated.  We identify the circumstances under which this occurs, beginning with the question of when $\Theta(M)$ is multiplicity free.

\begin{prop} \label{proptauMultFree}
Suppose $S \subseteq X_t$ is finite, connected, interval-closed, and sturdy and $M = M(S,t)$.  Then $\Theta(M)$ is multiplicity free.
\end{prop}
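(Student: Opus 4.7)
The plan is to reduce to a local computation at the single new vector space $\Theta(M)_N = M'_1$. After the step-(5) relabeling in the construction of $\Theta$, each vertex $v \in \{1,\ldots, N-1\}$ satisfies $\Theta(M)_v = M_{v+1}$ with its grading intact, so multiplicity freeness there is inherited from $M$. The claim therefore reduces to showing $\dim M'_{1,\lambda} \leq 1$ for every $\lambda$.

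Fix such a $\lambda$ and set $\mu = \lambda - (1,1,0,0)$, so that $a\mu_1+b\mu_2-c\mu_3-d\mu_4 = t+1$. By Lemma~\ref{lemNewGrading} the piece $M'_{1,\lambda}$ is built from six graded pieces of $M$, at the degrees
$p_1 = \mu$, $p_2 = \mu+(1,0,0,0)$, $p_3 = \mu+(0,1,0,0)$, $p_4 = \mu+(1,1,0,1)$, $p_5 = \mu+(1,1,1,0)$, $p_6 = \mu+(1,1,1,1)$,
sitting at vertices $1, 1+a, 1+b, 1+c, 1+d, 1$ respectively. Denote their dimensions by $a_1,\ldots,a_6 \in \{0,1\}$. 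By the calibrated property, each entry of the matrices $\beta_\lambda$, $\gamma_\lambda$, $\alpha_\lambda$ is $0$ or $\pm 1$, the nonzero value occurring precisely when both its source and target dimensions are $1$.

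Next I would extract the arithmetic constraints on $(a_1,\ldots,a_6)$. Sturdy applied at $\mu$ and at $\mu+(1,1,1,1)$ (both of value $t+1$) yields $a_2=a_3=1 \Rightarrow a_1 = 1$ and $a_4=a_5=1 \Rightarrow a_6 = 1$. The four Jacobian relations $\partial_s W = \pi_1 e - \pi_3 w$, $\partial_n W = -\pi_2 e + \pi_4 w$, $\partial_e W = s\pi_1 - n\pi_2$, $\partial_w W = -s\pi_3+n\pi_4$ annihilate $M$; for example, $\partial_s W(f_{p_1}) = a_4(a_2-a_3)f_{p_4} = 0$, and the three analogous checks together force $a_1=1$ together with $\max(a_4,a_5)=1$ to entail $a_2 = a_3$, and symmetrically $a_6 = 1$ together with $\max(a_2,a_3) = 1$ to entail $a_4 = a_5$.

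A direct rank computation then gives $\mathrm{rank}(\beta_\lambda) = a_1\max(a_2,a_3)$ and $\mathrm{rank}(\alpha_\lambda) = a_6\max(a_4,a_5)$, while (using that the $2 \times 2$ block of $\gamma_\lambda$ takes the form $\left(\begin{smallmatrix}1 & -1 \\ -1 & 1\end{smallmatrix}\right)$, which is singular) $\mathrm{rank}(\gamma_\lambda) = \max(a_2,a_3)\max(a_4,a_5)$. Combining with
$$\dim M'_{1,\lambda} = (a_2+a_3)+(a_4+a_5) - \mathrm{rank}(\gamma_\lambda) - \mathrm{rank}(\alpha_\lambda) - \mathrm{rank}(\beta_\lambda),$$
I would conclude via a finite case analysis on the feasible tuples in $\{0,1\}^6$. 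The main obstacle is deriving the parity conditions $a_2 = a_3$ and $a_4 = a_5$: sturdy alone leaves in place configurations such as $(a_1,\ldots,a_6) = (1,1,0,1,0,0)$, which would give $\dim M'_{1,\lambda} = 2$, and excluding them genuinely requires the structure of the potential $W$.
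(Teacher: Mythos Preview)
Your argument is correct and follows the same architecture as the paper's proof: reduce to the single new vector space over vertex~$N$, record the six local dimensions $a_1,\dots,a_6\in\{0,1\}$, compute the ranks of $\alpha_\lambda,\beta_\lambda,\gamma_\lambda$, and run a finite case check. The paper phrases the case analysis by asking which of the three summands $\ker\gamma_\lambda/\im\beta_\lambda$, $\im\gamma_\lambda$, $\ker\alpha_\lambda/\im\gamma_\lambda$ is nonzero, while you package it via the dimension formula $\dim M'_{1,\lambda}=(a_2+a_3)+(a_4+a_5)-\mathrm{rank}\,\gamma_\lambda-\mathrm{rank}\,\alpha_\lambda-\mathrm{rank}\,\beta_\lambda$; these are equivalent bookkeeping.

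The one point worth correcting is your closing remark. You write that excluding configurations like $(a_1,\dots,a_6)=(1,1,0,1,0,0)$ ``genuinely requires the structure of the potential~$W$''. In fact it does not: the interval-closed hypothesis already does the job, and this is exactly what the paper uses. Since $p_1\leq p_3\leq p_4$ in $X_t$, having $a_1=a_4=1$ forces $a_3=1$ by interval-closedness, so $(1,1,0,1,0,0)$ is impossible. More generally, interval-closedness gives $a_1=1,\ \max(a_4,a_5)=1\Rightarrow a_2=a_3=1$ and the symmetric statement, which is strictly stronger than what you extract from $\partial_s W,\partial_n W,\partial_e W,\partial_w W$. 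Your Jacobian route still works (the weaker conclusion $a_2=a_3$ suffices for the case check), but it is a detour: the relations $\partial W=0$ hold on $M$ \emph{because} $S$ is interval-closed (this is the content of Proposition~\ref{propMSt}), so you are using interval-closedness indirectly. Note too that your assertion that each entry of $\gamma_\lambda$ is $\pm 1$ whenever source and target are present already leans on interval-closedness in the case that some $\pi_i$ is a two-arrow path.
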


\begin{proof}
Let $M' = \Theta(M)$ and let $S' = S_1' \cup \ldots \cup S_N'$ be its degree set.  Let $\lambda \in S_v'$.  If $v < N$ then $M'_v = M_{v+1}$, so $\dim M'_{v,\lambda} = \dim M_{v+1,\lambda} = 1$.  So the only possibility for higher multiplicity occurs if $\lambda \in S'_N$.  In this case
\begin{displaymath}
M_{N,\lambda}' = \frac{\ker \gamma_{\lambda}}{\im \beta_{\lambda}} \oplus \im \gamma_{\lambda} \oplus \frac{\ker \alpha_{\lambda}}{\im \gamma_{\lambda}}
\end{displaymath}
relative to maps
\begin{displaymath}
A \stackrel{\beta_{\lambda}}{\longrightarrow} B \stackrel{\gamma_{\lambda}}{\longrightarrow} C \stackrel{\alpha_{\lambda}}{\longrightarrow} D
\end{displaymath}
where $A = M_{1,\lambda-(1,1,0,0)}$, $B=M_{1+a,\lambda-(0,1,0,0)} \oplus M_{1+b,\lambda-(1,0,0,0)}$, $C=M_{1+c,\lambda+(0,0,0,1)} \oplus M_{1+d,\lambda+(0,0,1,0)}$, and $D = M_{1,\lambda+(0,0,1,1)}$.  As $M$ is multiplicity free, $A$ and $D$ have dimension at most $1$ while $B$ and $C$ have dimension at most $2$.  One can check cases by case that
\begin{itemize}
\item $\alpha_{\lambda}$ and $\beta_{\lambda}$ always have full rank and
\item $\gamma_{\lambda}$ has full rank except if $\dim B = \dim C = 2$ in which case it has rank $1$.
\end{itemize}
By assumption $\lambda \in S'_N$ so $\dim M'_{N,\lambda} \geq 1$.  We consider the possible ways that could occur and show in each case that $\dim M'_{N,\lambda} = 1$.

First suppose $\ker \gamma_{\lambda} / \im \beta_{\lambda} \neq 0$.  It must be that $\dim B \geq 1$.  If $\dim B = 1$ then $\ker \gamma_{\lambda} = B$ and $\im \beta_{\lambda} = 0$.  It follows that $\dim A = \dim C = 0$.  On the other hand, suppose $\dim B = 2$.  This means that $\lambda - (0,1,0,0)$ and $\lambda -(1,0,0,0)$ are in $S$, so by the sturdy condition $\lambda - (1,1,0,0) \in S$.  Hence $\dim A = 1$ and $\dim \im \beta_{\lambda} = 1$.  Therefore $\dim \ker \gamma_{\lambda} = 2$ and again $C = 0$.  In both cases we have $C = 0$ so that $\im \gamma_{\lambda} = \ker \alpha_{\lambda} / \im \gamma_{\lambda} = 0$.  So $M'_{N,\lambda}$ equals just $\ker \gamma_{\lambda} / \im \beta_{\lambda}$ and has dimension $1$.

Next suppose $\im \gamma_{\lambda}$ is nonzero.  Then $B$ and $C$ must both be non-empty.  In all cases $\gamma_{\lambda}$ has rank $1$ so $\dim \im \gamma_{\lambda} = 1$.  The previous paragraph shows that $\ker \gamma_{\lambda} / \im \beta_{\lambda}$ is empty.  It remains to show $\ker \alpha_{\lambda} / \im \gamma_{\lambda} = 0$, i.e. that $\dim \ker \alpha_{\lambda} = 1$.  If $\dim C = 2$ then $\lambda + (0,0,0,1)$ and $\lambda + (0,0,1,0)$ are both in $S$, so by the sturdy condition $\lambda + (0,0,1,1) \in S$.  Hence $\dim D = 1$.  On the other hand, if $\dim C = 1$ then one out of $\lambda + (0,0,0,1)$ and $\lambda + (0,0,1,0)$ is not in $S$.  Each of these is larger in the partial order than both $\lambda - (0,1,0,0)
$ and $\lambda - (1,0,0,0)$, and one of the latter is in $S$ because $\dim B \geq 1$.  By the interval-closed condition, $S$ cannot contain $\lambda + (0,0,1,1)$, so $\dim D = 0$.  In both cases $\dim D = \dim C - 1$, and since $\alpha_{\lambda}$ is full rank we get $\dim \ker \alpha_{\lambda} = 1$.

Finally suppose $\ker \alpha_{\lambda} / \im \gamma_{\lambda}$ is nonzero.  By the above $\ker \gamma_{\lambda} / \im \beta_{\lambda} = \im \gamma_{\lambda} = 0$.  If $\dim C = 1$ then $\dim D = 0$.  If $\dim C = 2$, then as explained above the sturdy condition forces $\dim D = 1$.  Therefore $\dim \ker \alpha_{\lambda} / \im \gamma_{\lambda} = 1$.
\end{proof}

\begin{table}
\begin{tabular}{l|llllllll}
$\dim A$ & 0 & 1 & 0 & 0 & 1 & 1 & 0 & 0 \\
\hline
$\dim B$ & 1 & 2 & 1 & 1 & 2 & 2 & 0 & 0 \\
\hline
$\dim C$ & 0 & 0 & 1 & 2 & 1 & 2 & 1 & 2 \\
\hline
$\dim D$ & 0 & 0 & 0 & 1 & 0 & 1 & 0 & 1 \\
\hline
$\dim \frac{\ker \gamma_{\lambda}}{\im \beta_{\lambda}}$ & 1 & 1 & 0 & 0 & 0 & 0 & 0 & 0 \\
\hline
$\dim \im \gamma_{\lambda}$ & 0 & 0 & 1 & 1 & 1 & 1 & 0 & 0 \\
\hline
$\dim \frac{\ker \alpha_{\lambda}}{\im \gamma_{\lambda}}$ & 0 & 0 & 0 & 0 & 0 & 0 & 1 & 1 \\
\end{tabular}
\caption{The possible scenarios in which $\dim(M'_{N,\lambda}) = 1$.  The definitions of the spaces $A$, $B$, $C$, and $D$, are given in the proof of Proposition \ref{proptauMultFree}.}
\label{tabSPrime}
\end{table}

We are now ready to prove Theorem \ref{thmMain}, which states that for $S$ as above, 
\begin{equation} \label{eqMain}
\Theta(M(S,t)) = M(S',t+1)
\end{equation}
for some $S'= S_2 \cup S_3 \cup \ldots \cup S_N \cup S'_N$.  In fact, the proof of Proposition \ref{proptauMultFree} indicates how to construct $S_N'$.  Fix $\lambda \in \mathbb{Z}^4$ with $a\lambda_1 + b\lambda_2 - c\lambda_3 - d\lambda_4 = t+N+1$.  Then whether or not $\lambda \in S_N'$ is determined by the dimensions of $A$, $B$, $C$, and $D$, which in turn depend on $S$ (e.g. $A$ has dimension $1$ if $\lambda - (1,1,0,0) \in S$ and $0$ otherwise).  Table \ref{tabSPrime} lists the eight cases under which $\lambda \in S_N'$.  

\begin{proof}[Proof of Theorem \ref{thmMain}]
Suppose $S \subseteq X_t$ is finite, connected, interval-closed, and sturdy.  Let $M' = \Theta(M(S,t))$.  By Proposition \ref{proptauMultFree}, $M'$ is multiplicity free.  By Proposition \ref{proptauGraded}, its degree set $S'$ has
\begin{displaymath}
S'_v \subseteq \{\lambda : a\lambda_1 + b\lambda_2 - c \lambda_3 - d\lambda_4 = t+1+v\}
\end{displaymath}
for $v=1,\ldots, N$.  It remains to check condition (2) in Definition \ref{defCalibrated}.  Since $M=M(S,t)$ is sturdy, it has a basis $\{f_{\lambda} : \lambda \in S\}$ satisfying \eqref{eqCalibrated}.  If $\lambda \in S_v' = S_{v+1}$ for $v=1,\ldots, N-1$ then $M'_{v,\lambda} = M_{v+1,\lambda}$ so we can still use $f_{\lambda}$ as a generator.  We also need a generator of each $M'_{N,\lambda}$ with $\lambda \in S'_N$.  Calling these $f_{\lambda}$ as well, we will show that they can be chosen in a consistent manner.  

Note that in \eqref{eqCalibrated} it suffices to verify the case where the path $\pi$ is a single arrow.  If the arrow is outside the region changed by mutation then the condition clearly survives.  If it is one of the new diagonal arrows, e.g. $[ws]$ (see the bottom of Figure \ref{figtau}) then by definition $[ws]f_{\lambda} = w(s(f_{\lambda}))$.  This equals $f_{\lambda+\wt(s)+\wt(w)}$ (assuming this weight is in $S_{1+b} =S'_b$) by the calibrated property for $M$.  The weight of $[ws]$ in $Q'$ is $(0,1,1,0)$ which equals the sum $\wt(s)+\wt(w)$ of the weights of the arrows in $Q$.  The other new diagonal arrows work similarly.

Finally consider an arrow of $Q'$ incident to $N$, namely $e^*$, $-w^*$, $-s^*$, or $n^*$.  Fix $\lambda \in S'_N$ and restrict to the appropriate graded piece of the arrow, 
\begin{align*}
e^*_{\lambda} &: M'_{a,\lambda-(0,1,0,0)} \to M'_{N,\lambda} \\
-w^*_{\lambda} &: M'_{b,\lambda-(1,0,0,0)} \to M'_{N,\lambda} \\
-s^*_{\lambda} &: M'_{N,\lambda} \to M'_{c,\lambda+(0,0,0,1)} \\
n^*_{\lambda} &: M'_{N,\lambda} \to M'_{d,\lambda+(0,0,1,0)} 
\end{align*}
If the space other than $M'_{N,\lambda}$ in the map is nonzero we need to show the map is an isomorphism.  Whenever this occurs a choice of $f_{\lambda}$ is forced.  For instance if $\lambda-(0,1,0,0) \in S'_a$ then it must be $f_{\lambda} = e^*(f_{\lambda - (0,1,0,0)})$.  So we also must show these choices agree in the case that several of the four maps are nonzero.  There are eight cases, one for each column of Table \ref{tabSPrime}.

In the first case $\dim B = 1$ where 
\begin{displaymath}
B = M'_{a,\lambda-(0,1,0,0)} \oplus M'_{b,\lambda-(1,0,0,0)}
\end{displaymath}
and $M'_{N,\lambda} = \ker \gamma_{\lambda} / \im \beta_{\lambda}$.  By \eqref{eqewStar1}
\begin{equation} \label{eqewStar1Again}
\left[\begin{array}{cc} e_{\lambda}^* & w_{\lambda}^*\end{array} \right] = -\pi_{\lambda} \rho_{\lambda}.
\end{equation}
Since $\dim C = 0$, $\gamma_{\lambda}$ is the zero map which implies $\rho_{\lambda}$ is the identity.  Meanwhile $\pi_{\lambda}$ is always surjective.  One of $e^*_{\lambda}, w^*_{\lambda}$ is forced to be zero since $M'_{a,\lambda-(0,1,0,0)} = 0$ or $M'_{b,\lambda-(1,0,0,0)}=0$.  The other one then is nonzero which is all we need in this case.  The change from $w^*$ to $-w^*$ has no effect.

In the second case $\dim A = 1$ and $\dim B = 2$.  Equation \eqref{eqewStar1Again} still holds and again $\gamma_{\lambda} = 0$ so $\rho_{\lambda}$ is the identity.  By the calibrated property of $M$, $e(f_{\lambda-(1,1,0,0)}) = f_{\lambda-(0,1,0,0)}$ and $w(f_{\lambda-(1,1,0,0)}) = f_{\lambda-(1,0,0,0)}$.  Therefore
\begin{displaymath}
\im \beta_{\lambda} = \im \left[\begin{array}{c} e_{\lambda} \\ w_{\lambda} \end{array} \right]
= \textrm{span} \left[\begin{array}{c} f_{\lambda-(0,1,0,0)} \\ f_{\lambda-(1,0,0,0)} \end{array} \right].
\end{displaymath}
Now $\pi_{\lambda}$ quotients out by $\im \beta_{\lambda}$ so
\begin{displaymath}
e^*(f_{\lambda-(0,1,0,0)}) = \left[\begin{array}{c} -f_{\lambda-(0,1,0,0)} \\ 0 \end{array} \right] + \im \beta_{\lambda}
= \left[\begin{array}{c} 0 \\ f_{\lambda-(1,0,0,0)} \end{array} \right] + \im \beta_{\lambda} = -w^*(f_{\lambda-(1,0,0,0)})
\end{displaymath}
and we get a consistent choice for $f_{\lambda}$.

In the third case $\dim B = \dim C = 1$ and $M'_{N,\lambda} = \im \gamma_{\lambda}$.  By \eqref{eqewStar2}
\begin{displaymath}
\left[\begin{array}{cc} e_{\lambda}^* & w_{\lambda}^*\end{array} \right] = -\gamma_{\lambda}
\end{displaymath}
and by \eqref{eqbetaStar}
\begin{displaymath}
\left[\begin{array}{c} s_{\lambda}^* \\ n_{\lambda}^*\end{array} \right] = \iota_{\lambda}
\end{displaymath}
where $\iota_{\lambda}: \im \gamma_{\lambda} \to M_{1+c,\lambda} \oplus M_{1+d,\lambda}$ is inclusion.  Composing we get
\begin{displaymath}
\left[\begin{array}{cc} s_{\lambda}^*e_{\lambda}^* & s_{\lambda}^*w_{\lambda}^* \\ n_{\lambda}^*e_{\lambda}^* & n_{\lambda}^*w_{\lambda}^* \end{array} \right] = -\gamma_{\lambda}
\end{displaymath}
(the inclusion is built into $\gamma$).  Depending on the makeup of $B$ and $C$, exactly one of $e_{\lambda}^*,w_{\lambda}^*$ is nonzero and exactly one of $s_{\lambda}^*,n_{\lambda}^*$ is nonzero, so one entry of the matrix is possibly nonzero.  On the other hand, by \eqref{eqgamma} each entry equals plus or minus a path in $Q$.  The calibrated property for $M$ gives one of the following four formulas as appropriate
\begin{align*}
s^*e^*(f_{\lambda-(0,1,0,0)}) &= -f_{\lambda+(0,0,0,1)} \\
s^*w^*(f_{\lambda-(1,0,0,0)}) &= f_{\lambda+(0,0,0,1)} \\
n^*e^*(f_{\lambda-(0,1,0,0)}) &= f_{\lambda+(0,0,1,0)} \\
n^*w^*(f_{\lambda-(1,0,0,0)}) &= -f_{\lambda+(0,0,1,0)} 
\end{align*}
A formula for $f_{\lambda}$ always ensues, for instance
\begin{displaymath}
f_{\lambda} = e^*(f_{\lambda-(0,1,0,0)}) = (-s_{\lambda}^*)^{-1}(f_{\lambda+(0,0,0,1)}).
\end{displaymath}

Cases four, five, and six are identical to case three except that at the end more than one of the four formulas apply.  We omit the details for cases seven and eight.  
\end{proof}

In the context of \eqref{eqMain}, write $S' = \Theta(S)$ leaving the dependence on $t$ implicit.  So $\Theta$ is a partial function from finite, connected, interval-closed subsets of $X_t$ to those of $X_{t+1}$ defined for sets that are sturdy.  We will also want to consider $\Theta^{-1}$.  Let $Q^{\opp}$ be the quiver obtained by reversing all arrows of $Q$.  It is easy to check that
\begin{itemize}
\item the vertex map $v \to N+1-v$ induces an isomorphism of $Q$ with $Q^{\opp}$ and
\item $Q^{\opp}$ is a Gale-Robinson quiver with parameters $(a',b',c',d') = (c,d,a,b)$.  
\end{itemize}
As $Q^{\opp}$ is Gale-Robinson, there is a corresponding partial function $\Theta^{\opp}$ from subsets of $X^{\opp}_t$ to subsets of $X^{\opp}_{t+1}$ where
\begin{align*}
X^{\opp}_t &= \{\lambda : t+1 \leq c\lambda_1 + d\lambda_2 - a\lambda_3 - b\lambda_4 \leq t+N\} \\
&= \sigma(X_{-(t+N+1)})
\end{align*}
and $\sigma: \mathbb{Z}^4 \to \mathbb{Z}^4$ interchanges $\lambda_1$ with $\lambda_3$ and $\lambda_2$ with $\lambda_4$.

\begin{prop} \label{propDual}
Let $S \subseteq X_t$.  The following are equivalent:
\begin{itemize}
\item $\Theta^{-1}$ is defined on $S$
\item $\Theta^{\opp}$ is defined on $\sigma(S) \subseteq X^{\opp}_{-(t+N+1)}$
\item $\Theta$ is defined on $-S \subseteq X_{-(t+N+1)}$. 
\end{itemize}
In this case $S' = \Theta^{-1}(S) \subseteq X_{t-1}$ satisfies $\sigma(S') = \Theta^{\opp}(\sigma(S))$ and $-S' = \Theta(-S)$.
\end{prop}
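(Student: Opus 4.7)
The plan is to establish two compatible symmetries and combine them. The first is the quiver isomorphism $\phi\colon Q \to Q^{\opp}$ induced by $v \mapsto N+1-v$: each arrow $i \to j$ of $Q$ maps to $N+1-j \to N+1-i$ reinterpreted as an arrow of $Q^{\opp}$, and a direct check against Table \ref{tabArrowTypes} shows that the $Q^{\opp}$-weight (with parameters $(c,d,a,b)$) of $\phi(\alpha)$ equals $\sigma$ applied to the $Q$-weight of $\alpha$. The second is the involution $-\sigma\colon \mathbb{Z}^4 \to \mathbb{Z}^4$, which satisfies
\begin{displaymath}
a(-\sigma(\lambda))_1 + b(-\sigma(\lambda))_2 - c(-\sigma(\lambda))_3 - d(-\sigma(\lambda))_4 = c\lambda_1 + d\lambda_2 - a\lambda_3 - b\lambda_4,
\end{displaymath}
so it restricts to a level-preserving bijection $X^{\opp}_s \to X_s$ that preserves numerical vertex labels. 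Composed, these two symmetries act as $S \mapsto -S$, which is exactly what connects (a) to (c).

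For the first symmetry I would pushforward $M(S,t)$ along $\phi$ to a calibrated $M^{\opp}(\sigma(S),-(t+N+1))$: each basis vector $f_\lambda$ at vertex $v$ of $Q$ becomes a basis vector of $Q^{\opp}$-weight $\sigma(\lambda)$ at vertex $N+1-v$, and the level condition $-(t+N+1)+(N+1-v) = -(t+v)$ is automatic. Writing $\Theta^{-1} = \mu_1 \circ \rho = \rho \circ \mu_N$ on $Q$ and $\Theta^{\opp} = \rho^{-1} \circ \mu_1$ on $Q^{\opp}$, observe that mutation at vertex $1$ of $Q^{\opp}$ corresponds under $\phi$ to mutation at vertex $N$ of $Q$, while the $(-1)$-shift on $Q^{\opp}$ pulls back to the $(+1)$-shift on $Q$. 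Hence $\phi$ intertwines $\Theta^{-1}$ on $Q$ with $\Theta^{\opp}$ on $Q^{\opp}$, giving the equivalence (a) $\Leftrightarrow$ (b) and the identity $\sigma(S') = \Theta^{\opp}(\sigma(S))$ in one stroke.

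For the second symmetry I would check that $-\sigma$ intertwines $\Theta^{\opp}$ on $Q^{\opp}$ with $\Theta$ on $Q$ at the level of degree sets. The pieces $(\sigma(S))_v$ with $v<N$ shift to $(-S)_v$ automatically, so everything reduces to matching the new top piece. After the substitution $\mu = -\sigma(\lambda)$, the dimensions $(A^{\opp},B^{\opp},C^{\opp},D^{\opp})$ that determine whether $\mu \in (\sigma(S))'_N$ via the analog of Table \ref{tabSPrime} for $Q^{\opp}$ turn out to equal the dimensions $(D,C,B,A)$ that determine whether $\lambda \in (-S)'_N$ via Table \ref{tabSPrime} for $Q$ applied to $-S$. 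The needed identity $-\sigma((\sigma(S))'_N) = (-S)'_N$ therefore reduces to showing that Table \ref{tabSPrime} is invariant under the column reversal $(A,B,C,D) \mapsto (D,C,B,A)$. Inspection of the eight columns confirms this: columns pair as $1 \leftrightarrow 7$, $2 \leftrightarrow 8$, $4 \leftrightarrow 5$, with columns $3$ and $6$ fixed. This combinatorial check is the main obstacle; once it is established, the chain $\Theta(-S) = -\sigma(\Theta^{\opp}(\sigma(S))) = -\sigma(\sigma(S')) = -S'$ yields (b) $\Leftrightarrow$ (c) along with the remaining identity, and the sturdy hypotheses for $\sigma(S)$ and $-S$ both translate under these substitutions into the same dual sturdy condition on $S$ at the top level $t+N$.
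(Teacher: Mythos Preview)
Your approach is essentially the same as the paper's. Both proofs first use the isomorphism $v\mapsto N+1-v$ between $Q$ and $Q^{\opp}$ to intertwine $\Theta^{-1}=\rho\circ\mu_N$ with $\Theta^{\opp}=\rho^{-1}\circ\mu_1$, yielding (a)$\Leftrightarrow$(b) and $\sigma(S')=\Theta^{\opp}(\sigma(S))$; then both observe that the involution $\lambda\mapsto -\sigma(\lambda)$ carries $\sigma(S)$ to $-S$, preserves sturdiness, and swaps the roles of $(A,B,C,D)$ with $(D,C,B,A)$ in Table~\ref{tabSPrime}, whose column set is invariant under this swap---your explicit pairing $1\leftrightarrow 7$, $2\leftrightarrow 8$, $4\leftrightarrow 5$, $3$ and $6$ fixed is exactly the ``by inspection'' step the paper alludes to.
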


\begin{proof}
Let $M = M(S,t)$.  Let $M^{\opp}$ be the representation of $Q^{\opp}$ obtained by applying the isomorphism $v \mapsto N+1-v$.  It is not hard to see $M^{\opp} = M(\sigma(S), -(t+N+1))$.  Indeed, it is isomorphic to $M$ so it must be calibrated, and e.g. an East arrow $i \to i+a$ of $Q$ has weight $(1,0,0,0)$ while its opposite $i+a \to i$ in $Q^{\opp}$ is a South arrow with weight $(0,0,1,0) = \sigma(1,0,0,0)$.  The isomorphism clearly relates $\mu_N$  with $\mu_1$ and $\rho$ with $\rho^{-1}$.  Therefore $\Theta^{-1} = \rho \circ \mu_N$ on  $Q$ corresponds with $\Theta^{\opp} = \rho^{-1} \circ \mu_1$ on $Q^{\opp}$.  We have proven equivalence of the first two conditions as well as the identity 
\begin{displaymath}
\sigma(\Theta^{-1}(S)) = \Theta^{\opp}(\sigma(S)).
\end{displaymath}

Now let $M' = M(-S,-(t+N+1))$.  We relate the action of $\Theta$ on $M'$ with the action of $\Theta^{\opp}$ on $M^{\opp}$.  Note $-S$ and $\sigma(S)$ are related by the involution 
\begin{displaymath}
\lambda \mapsto \sigma(-\lambda) = (-\lambda_3,-\lambda_4, -\lambda_1, -\lambda_2).
\end{displaymath}  
This map preserves sturdiness of sets, so the second and third conditions above are in fact equivalent.  Suppose $-S$ and $\sigma(S)$ are both sturdy.  We need to show $\Theta(-S)$ and $\Theta^{\opp}(\sigma(S))$ are related by the aforementioned involution.  The only difficulty lies over the vertex $N$, and in each case the set of weights $\lambda$ are those corresponding to the cases listed in Table \ref{tabSPrime}.  The involution interchanges $A$ with $D$ and $B$ with $C$.  By inspection the set of cases in the table are closed under this operation.
\end{proof}

\section{From calibrated representations to cluster variables} \label{secFPolynomials}
Fix a Gale-Robinson quiver $Q$ together with its potential $W = W(Q)$.  Consider the cluster algebra with initial seed $(Q,(x_1,\ldots, x_N))$ and let $z$ be a non-initial cluster variable.  As explained in Section \ref{secDWZ} there is a representation $M = M(z)$ of $(Q,W)$ that encapsulates all the information of $z$.  In the current Section, we focus on the case when $M$ is calibrated, say $M = M(S,t)$.  We explain how to read off information about $z$ from $S$ and then give several examples.

Let $S \subseteq X_t$ be finite, connected and closed under intervals.  An \emph{order filter} of $S$ is a subset $R \subseteq S$ such that $\lambda \in R$, $\mu \in S$, and $\lambda \leq \mu$ imply $\mu \in R$.  Let $\mathcal{F}(S)$ denote the set of order filters of $S$.

\begin{prop}
The graded subrepresentations of $M = M(S,t)$ are precisely the $M(R,t)$ for $R$ an order filter of $S$.
\end{prop}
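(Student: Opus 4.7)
The plan is to set up a bijection between graded subrepresentations $P \subseteq M$ and subsets $R \subseteq S$ via $R = \{\lambda \in S : P_\lambda \neq 0\}$, then identify which subsets $R$ correspond to genuine subrepresentations. Because $M$ is multiplicity free, each $M_\lambda$ is at most one-dimensional, so any graded subspace $P = \bigoplus_\lambda P_\lambda$ with $P_\lambda \subseteq M_\lambda$ is determined by the subset $R$: specifically $P_\lambda = M_\lambda$ for $\lambda \in R$ and $P_\lambda = 0$ otherwise. In particular the basis $\{f_\lambda : \lambda \in R\}$ inherited from $M(S,t)$ spans $P$.

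Next I would characterize closure under the action of $A$. Let $\pi$ be a path from $u$ to $v$ and $\lambda \in R \cap S_u$. By the calibrated condition \eqref{eqCalibrated}, $\pi f_\lambda$ is either $0$ or $f_{\lambda+\wt(\pi)}$, and the nonzero case happens exactly when $\lambda + \wt(\pi) \in S_v$. Hence $P$ is closed under the action if and only if for every such $\lambda \in R$ and every path $\pi$ with $\lambda + \wt(\pi) \in S$ one has $\lambda + \wt(\pi) \in R$. Proposition \ref{proplambda2pi} converts this path-theoretic condition into a purely order-theoretic one: the weight sets $\wt(\pi)$ for paths in $Q$ from $u$ to $v$ range over exactly those $\nu \in \mathbb{Z}_{\geq 0}^4$ with $a\nu_1+b\nu_2-c\nu_3-d\nu_4 = v-u$. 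Combining with the level-$t$ condition $a\lambda_1 + b\lambda_2 - c\lambda_3 - d\lambda_4 = t+u$, the pairs $(\lambda,\mu)$ with $\mu = \lambda + \wt(\pi)$ and $\lambda \in S_u$, $\mu \in S_v$ are precisely the pairs in $S \times S$ with $\lambda \leq \mu$ componentwise. Therefore $P$ is a subrepresentation iff $R$ is an order filter of $S$.

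Finally I would verify that when $R$ is an order filter the subrepresentation $P$ is indeed isomorphic to $M(R,t)$ in the sense of Proposition \ref{propMSt}. For this I must check that $R$ satisfies the hypotheses guaranteeing existence of a calibrated representation with that degree set, namely that $R$ is finite and interval-closed. Finiteness is immediate from $R \subseteq S$. For interval-closedness, given $\lambda, \nu \in R$ and $\mu \in X_t$ with $\lambda \leq \mu \leq \nu$, the interval-closed property of $S$ yields $\mu \in S$, and then the order filter condition applied to $\lambda \leq \mu$ gives $\mu \in R$. The restriction of the basis $\{f_\lambda\}$ and the calibrated relations \eqref{eqCalibrated} then exhibit $P$ as a calibrated representation with degree set $R$ at level $t$, and uniqueness from Proposition \ref{propMSt} identifies $P$ with $M(R,t)$.

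No step of this argument looks hard; the only place one must be careful is translating "closure under all path actions" into "closure upward in the poset," which is precisely what Proposition \ref{proplambda2pi} is designed to do. The multiplicity-free hypothesis on $M$ is essential throughout, since it is what makes $P$ rigidly determined by the set $R$ rather than by a choice of subspace in each $M_\lambda$.
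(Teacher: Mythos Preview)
Your proof is correct and follows the same strategy as the paper's: use multiplicity-freeness to identify a graded subspace with a subset $R \subseteq S$, and invoke Proposition \ref{proplambda2pi} to convert closure under path actions into the upward-closure (order filter) condition. Your write-up is in fact more thorough than the paper's, which only spells out the forward direction explicitly and leaves the converse and the identification $P \cong M(R,t)$ implicit.
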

\begin{proof}
Let $P \subseteq M$ be a graded subrepresentation.  Then $M = \oplus_{\lambda} M_{\lambda}$ and $P = \oplus_{\lambda} P_{\lambda}$ with each $P_{\lambda} \subseteq M_{\lambda}$.  Each $M_{\lambda}$ has dimension at most one, so $P_{\lambda}$ equals either $0$ or $M_{\lambda}$.  As such
\begin{displaymath}
P = \bigoplus_{\lambda \in R} M_{\lambda}
\end{displaymath}
for some subset $R \subseteq S$.  Suppose $\lambda \in R$, $\mu \in S$, and $\lambda \leq \mu$.  By Proposition \ref{proplambda2pi} there is a path $\pi$ from $u = a\lambda_1 + b\lambda_2 - c\lambda_3 - d\lambda_4 - t$ to  $v = a\mu_1 + b\mu_2 - c\mu_3 - d\mu_4 - t$ so that $\wt(\pi) + \lambda = \mu$.  By the definition of calibrated, $\pi$ induces an isomorphism from $M_{\lambda}=P_{\lambda}$ to $M_{\mu}$.  As $P$ is a subrepresentation it must be that $P_{\mu} = M_{\mu}$, i.e. $\mu \in R$.  So $R$ is an order filter as desired.
\end{proof}

Combined with \eqref{eqFPolyFix} and Proposition \ref{propTFixed} we get the following result, which helps to explain the prevalence of order ideals (which are simply order filters of the opposite poset) in $F$-polynomial formulas.

\begin{thm} \label{thmFM}
Let $M = M(S,t)$.  Then
\begin{displaymath}
F(M) = \sum_{R \in \mathcal{F}(S)} y_1^{d_1(R)}\cdots y_N^{d_N(R)}
\end{displaymath}
where $d_i(R) = |R_i|$ is the number of $\lambda \in R$ with $a\lambda_1 + b\lambda_2 - c\lambda_3 - d\lambda_4 = t+i$.  Put another way
\begin{displaymath}
F(M) = \sum_{R \in \mathcal{F}(S)} \prod_{\lambda \in R} y_{a\lambda_1 + b\lambda_2 - c\lambda_3 - d\lambda_4-t}.
\end{displaymath}
\end{thm}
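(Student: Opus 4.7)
The plan is to assemble the formula by combining three ingredients already available: the torus-fixed point expression \eqref{eqFPolyFix} for the $F$-polynomial, Proposition \ref{propTFixed} identifying torus-fixed subrepresentations with graded subrepresentations, and the preceding Proposition identifying graded subrepresentations of $M(S,t)$ with order filters of $S$.

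First, starting from \eqref{eqFPolyFix}
\begin{displaymath}
F(M) = \sum_{\mathbf{e}} \chi\bigl(Gr_{\mathbf{e}}(M)^T\bigr) y_1^{e_1}\cdots y_N^{e_N},
\end{displaymath}
I would observe that by Proposition \ref{propTFixed}, the torus-fixed points of $Gr_{\mathbf{e}}(M)$ are exactly the $\mathbb{Z}^4$-graded subrepresentations of $M$ of dimension vector $\mathbf{e}$. By the preceding Proposition, these are precisely the subrepresentations of the form $M(R,t)$ for $R$ an order filter of $S$ satisfying $\dim M(R,t) = \mathbf{e}$. Since $S$ is finite, $\mathcal{F}(S)$ is a finite set, so $Gr_{\mathbf{e}}(M)^T$ is a finite collection of isolated points, and hence $\chi(Gr_{\mathbf{e}}(M)^T) = |Gr_{\mathbf{e}}(M)^T|$.

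Next I would compute the dimension vector of $M(R,t)$ at each vertex. By condition (1) of Definition \ref{defCalibrated}, a basis vector $f_{\lambda}$ of $M(R,t)$ lies over vertex $v$ exactly when $a\lambda_1+b\lambda_2-c\lambda_3-d\lambda_4 = t+v$. Hence the dimension of $M(R,t)$ at vertex $i$ equals $d_i(R) := |R_i|$ where $R_i = \{\lambda \in R : a\lambda_1+b\lambda_2-c\lambda_3-d\lambda_4 = t+i\}$. Substituting back into \eqref{eqFPolyFix} and summing over all order filters $R$ (rather than separately over $\mathbf{e}$ and then over filters realizing that $\mathbf{e}$) yields
\begin{displaymath}
F(M) = \sum_{R \in \mathcal{F}(S)} y_1^{d_1(R)}\cdots y_N^{d_N(R)},
\end{displaymath}
and the alternative rewriting as a product over $\lambda \in R$ is just a reindexing of the exponents.

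There is essentially no obstacle beyond assembling these pieces; the only point that deserves explicit mention is that finiteness of $\mathcal{F}(S)$ makes the fixed locus discrete so that Euler characteristic reduces to cardinality. Everything else is bookkeeping of the grading.
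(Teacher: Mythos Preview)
Your proof is correct and follows exactly the approach the paper indicates: the paper itself offers no detailed proof, merely the sentence ``Combined with \eqref{eqFPolyFix} and Proposition \ref{propTFixed} we get the following result'' before stating the theorem, and your argument spells out precisely that combination along with the preceding Proposition on order filters. The only detail you add beyond the paper is the explicit remark that finiteness of $\mathcal{F}(S)$ makes the fixed locus discrete so that the Euler characteristic reduces to a point count, which is the one step worth making explicit.
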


\begin{rmk}
In light of \eqref{eqxFg}, a cluster variable can be recovered from its $F$-polynomial and $g$-vector.  If the variable comes from a calibrated representation then Theorem \ref{thmFM} provides the former.  There is a general explicit formula \cite{DWZ2} for the $g$-vector.  It could be interesting to find a direct description of $g$ in terms of $S$.
\end{rmk}

If $z$ is a non-initial cluster variable, then it appears in some seed obtained from the initial seed by
\begin{displaymath}
\mu_{k_m} \circ \ldots \circ \mu_{k_2} \circ \mu_{k_1},
\end{displaymath}
which we refer to as the mutation sequence $k_1,k_2,\ldots, k_m$.  We always assume $z$ is in position $k_m$ of the final cluster as otherwise fewer mutations would have sufficed.  For convenience, we consider the $k_i$ to be taken modulo $N$.

\begin{prop} \label{proptauz}
Suppose $M = M(z)$ where $z$ corresponds to the mutation sequence $k_1,k_2,\ldots, k_m$.  Then $\Theta(M) = M(z')$ where $z'$ corresponds to the mutation sequence $N, k_1-1, k_2-1,\ldots, k_m-1$ and $\Theta^{-1}(M) = M(z'')$ where $z''$ corresponds to $1,k_1+1,\ldots, k_m+1$.
\end{prop}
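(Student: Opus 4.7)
The strategy is to unravel the recursive definition of $M(z)$ completely, push the $\Theta = \rho^{-1} \circ \mu_1$ through the resulting composition of mutations, and then identify the output as $M(z')$ via the same recursion.

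First, I would make precise the naturality of mutation under the rotational symmetry $\rho$ of the Gale-Robinson quiver. Since $\rho$ is a bijection of the vertex set that preserves the arrow count between any two vertices (this is the content of $\rho^{-1}(\mu_1(Q,W)) = (Q,W)$), it acts as an autoequivalence on the category of representations of $(Q,W)$. The key commutation I need is
\begin{equation*}
\rho^{-1} \circ \mu_k = \mu_{k-1} \circ \rho^{-1},
\qquad
\rho \circ \mu_k = \mu_{k+1} \circ \rho,
\end{equation*}
acting on representations, with vertex labels taken mod $N$. Furthermore, $\rho^{\pm 1}$ sends the negative simple representation at vertex $v$ to the negative simple at $v \mp 1$.

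Next, I would unfold the recursion for $M(z)$. If $z$ corresponds to the mutation sequence $k_1, k_2, \ldots, k_m$, then $z$ occupies position $k_m$ of the final seed, so that $M_{\Sigma_m}(z)$ is the negative simple at $k_m$. Applying \eqref{eqMz} iteratively back to the initial seed $\Sigma_0$ and using that each $\mu_k$ is an involution, I obtain
\begin{equation*}
M(z) \;=\; M_{\Sigma_0}(z) \;=\; \mu_{k_1} \circ \mu_{k_2} \circ \cdots \circ \mu_{k_m}\bigl(\text{neg.\ simple at } k_m\bigr).
\end{equation*}
Now I would compute
\begin{equation*}
\Theta(M(z)) \;=\; \rho^{-1} \circ \mu_1 \circ \mu_{k_1} \circ \cdots \circ \mu_{k_m}\bigl(\text{neg.\ simple at } k_m\bigr)
\end{equation*}
and push $\rho^{-1}$ all the way to the right using the commutation rule, so that $\mu_1$ becomes $\mu_N$ and each $\mu_{k_i}$ becomes $\mu_{k_i - 1}$. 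At the end, $\rho^{-1}$ hits the negative simple at $k_m$ and turns it into the negative simple at $k_m - 1$. The resulting expression
\begin{equation*}
\mu_N \circ \mu_{k_1 - 1} \circ \cdots \circ \mu_{k_m - 1}\bigl(\text{neg.\ simple at } k_m - 1\bigr)
\end{equation*}
is, by the same recursion applied in the other direction, exactly $M(z')$ for the cluster variable $z'$ with mutation sequence $N, k_1 - 1, \ldots, k_m - 1$.

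The statement for $\Theta^{-1}$ follows by the mirror argument: write $\Theta^{-1} = \mu_1 \circ \rho$, apply $\rho \circ \mu_k = \mu_{k+1} \circ \rho$ to pass $\rho$ all the way to the right, and read off that the resulting mutation sequence is $1, k_1+1, \ldots, k_m+1$ ending at vertex $k_m+1$.

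The only non-routine point is the commutation $\rho^{-1} \circ \mu_k = \mu_{k-1} \circ \rho^{-1}$ on the level of representations of quivers with potential. Mutation in the sense of \cite{DWZ1} is only defined up to right-equivalence, so strictly speaking one has to check that the ambiguity in $\mu_1(Q,W)$ is compatible with the identification afforded by $\rho^{-1}$. Since the paper has already committed to a choice for which $\rho^{-1}(\mu_1(Q,W)) = (Q,W)$ on the nose, the remaining verification reduces to observing that mutation is a functorial construction on the category of decorated representations and that quiver automorphisms intertwine mutations at corresponding vertices; I expect this to be the main (but mild) technical obstacle, and it is essentially a bookkeeping consequence of how $\Theta$ was set up in Section \ref{sectau}.
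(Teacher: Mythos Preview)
Your proof is correct and follows essentially the same idea as the paper's: use the recursion \eqref{eqMz} together with the vertex relabeling $\rho^{-1}$ to convert the mutation sequence for $z$ into one for $z'$. The only difference is one of economy: the paper applies \eqref{eqMz} a single time, noting that $\mu_1(M) = M_{(Q',\x')}(z)$ and that from $(Q',\x')$ the variable $z$ is reached by the sequence $1,k_1,\ldots,k_m$, and then relabels via $\rho^{-1}$; you instead unfold the recursion all the way down to the negative simple and push $\rho^{-1}$ through the entire composition using $\rho^{-1}\circ\mu_k = \mu_{k-1}\circ\rho^{-1}$. Both arguments are straightforward, and the commutation you flag as the ``only non-routine point'' is indeed just bookkeeping, as you suspect.
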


\begin{proof}
Let $(Q,\x)$ be the initial cluster and $(Q',\x') = \mu_1(Q,\x)$.  Then by \eqref{eqMz} 
\begin{displaymath}
M_{(Q',\x')}(z) = \mu_1(M_{(Q,\x)}(z)) = \mu_1(M).
\end{displaymath}
To obtain $z$ from $(Q',\x')$, you fist mutate at $1$ to reach $(Q,\x)$ and then do the sequence $k_1,\ldots, k_m$ to get $z$.  So $1,k_1,\ldots, k_m$ is the mutation sequence corresponding to the representation $\mu_1(M)$ of $Q'$.  As $\rho^{-1}$ is an isomorphism decreasing each vertex label by $1$, we have that $N,k_1-1,\ldots, k_m-1$ is the mutation sequence corresponding to the representation $\rho^{-1}\mu_1(M) = \Theta(M)$ of $\rho^{-1}(Q')=Q$.  A similar argument works for $\Theta^{-1}(M)$.
\end{proof}

\begin{prop} \label{propMxi}
Take $(Q,(x_1,\ldots, x_N))$ as an initial seed where $Q$ is a Gale-Robinson quiver.  Let the $x_i$ for $i>N$ or $i\leq 0$ be the non-initial cluster variables defined by \eqref{eqGaleRobinson}.  Then $M(x_i)$ is calibrated for each such $i$.  More precisely
\begin{itemize}
\item $M(x_{1-j}) = M(S^{(j)},j-N-1)$ and
\item $M(x_{N+j}) = M(-S^{(j)},-j)$
\end{itemize}
for each $j > 0$ where 
\begin{equation} \label{eqSj}
\begin{split}
S^{(j)} = \{\lambda \in \mathbb{Z}^4:  j-N &\leq a\lambda_1 + b\lambda_2 - c\lambda_3 - d\lambda_4 \leq j-1, \\
 \lambda_1, \lambda_2  &\geq 0, \lambda_3, \lambda_4 \leq 0\}
\end{split}
\end{equation}
\end{prop}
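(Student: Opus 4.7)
I would proceed by induction on $j$, using Proposition \ref{proptauz} to translate $\Theta$ into a step on mutation sequences and Proposition \ref{propDual} so that both stated identities reduce to the single set-theoretic claim $\Theta(S^{(j)})=S^{(j+1)}$. The base case $j=1$ is immediate: each of $x_0$ and $x_{N+1}$ comes from a single mutation (at $N$ or $1$) of the initial seed, so the recursive construction of $M(z)$ produces the simple positive representations at $N$ and $1$; since $S^{(1)}=\{(0,0,0,0)\}$, both $M(S^{(1)},-N)$ and $M(-S^{(1)},-1)$ agree with these. For the inductive step, Proposition \ref{proptauz} carries the mutation sequence $N,N-1,\ldots,N-j+1$ for $x_{1-j}$ to the sequence $N,N-1,\ldots,N-j$, which is precisely the sequence for $x_{-j}=x_{1-(j+1)}$, so by the inductive hypothesis the $x_{1-j}$ statement reduces to proving $\Theta(S^{(j)})=S^{(j+1)}$ at levels $j-N-1$ and $j-N$. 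The companion $x_{N+j}$ statement then follows since Proposition \ref{propDual} gives $\Theta^{-1}(-S^{(j)})=-S^{(j+1)}$ at the corresponding levels.

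To invoke Theorem \ref{thmMain} I must verify that $S^{(j)}$ is finite, interval-closed, sturdy, and connected. Finiteness and interval-closedness are automatic from the orthant-plus-value-window description, since each of the four coordinate inequalities is preserved by componentwise intervals. Sturdiness follows from a short orthant argument: if $\lambda+(1,0,0,0)$ and $\lambda+(0,1,0,0)$ both lie in $S^{(j)}$, then combining their coordinate conditions forces $\lambda_1,\lambda_2\geq 0$ and $\lambda_3,\lambda_4\leq 0$, and $\lambda$ has value $j-N$ which sits in the value window, so $\lambda\in S^{(j)}$; the second implication is dual. For connectedness I would exhibit, for each $\lambda\in S^{(j)}$, a sequence of covering moves within $S^{(j)}$ reaching a canonical element of the minimum-value slice, and separately check that this minimal slice is itself connected.

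With Theorem \ref{thmMain} in force, Proposition \ref{proptauGraded} identifies the slices $v=1,\ldots,N-1$ of $\Theta(S^{(j)})$ with $S^{(j)}_{v+1}=S^{(j+1)}_v$, so only the top slice $\Theta(S^{(j)})_N$ is in question. Fix $\lambda$ with $a\lambda_1+b\lambda_2-c\lambda_3-d\lambda_4=j$. The orthant criteria for membership of each of the six shifts $\lambda-(1,1,0,0), \lambda-(0,1,0,0), \lambda-(1,0,0,0), \lambda+(0,0,0,1), \lambda+(0,0,1,0), \lambda+(0,0,1,1)$ in $S^{(j)}$ translate into simple inequalities on the coordinates of $\lambda$, from which one reads off the quadruple $(\dim A,\dim B,\dim C,\dim D)$ of Proposition \ref{proptauMultFree}. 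A case split on the signs of $\lambda_1,\lambda_2,\lambda_3,\lambda_4$ will show that this quadruple matches one of the eight scenarios of Table \ref{tabSPrime} exactly when $\lambda_1,\lambda_2\geq 0$ and $\lambda_3,\lambda_4\leq 0$, which is exactly the condition $\lambda\in S^{(j+1)}_N$.

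The main obstacle is this final case analysis; in particular, one must check that a sign violation in any single coordinate kills all eight dimension patterns of the table, not merely the generic ones. A secondary technical nuisance is the connectedness verification when $j>N$, since $(0,0,0,0)$ is no longer available as a universal anchor and the argument must instead navigate the narrow strip imposed by the value window.
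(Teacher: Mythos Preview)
Your plan is correct and matches the paper's proof essentially step for step: induction on $j$, base case via the simple representations at $N$ and $1$, the inductive step via Proposition~\ref{proptauz} reducing to $\Theta(S^{(j)})=S^{(j+1)}$, sturdiness by the orthant argument, the top-slice analysis against Table~\ref{tabSPrime}, and the $x_{N+j}$ side via Proposition~\ref{propDual}. If anything you are more careful than the paper, which invokes Theorem~\ref{thmMain} without explicitly verifying the connectedness hypothesis you flag; in practice connectedness plays no role in the proofs of Proposition~\ref{proptauMultFree} or Theorem~\ref{thmMain}, so you need not labor over it.
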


\begin{proof}
The proof is by induction on $j$.  It follows from the general theory that $M(x_0)$ is the (positive) simple representation at $N$ because $x_0$ is obtained from the initial cluster by a single mutation at $N$.  Meanwhile $S^{(1)} = \{(0,0,0,0)\}$ so $M(S^{(0)}, -N)$ is simple, and the nonzero part is over vertex $v = 0+0-0-0-(-N) = N$.

Now suppose $M(x_{1-j}) = M(S^{(j)},j-N-1)$ and consider the cluster variable $x_{-j}$.  Note $x_{1-j}$ is obtained by the mutation sequence $N,N-1,\ldots, N-j+1$ while $x_{-j}$ is obtained from $N,N-1,\ldots, N-j$.  By Proposition \ref{proptauz}
\begin{displaymath}
M(x_{-j}) = \Theta(M(x_{1-j})).
\end{displaymath}
We claim $S^{(j)}$ is sturdy.  Indeed, let $\lambda \in \mathbb{Z}^4$ with $a\lambda_1 + b\lambda_2 - c\lambda_3 - d\lambda_4 = j-N$.  If $\lambda + (1,0,0,0)$ and $\lambda + (0,1,0,0)$ are in $S(j)$ then $\lambda_1, \lambda_2 \geq 0$ and $\lambda_3 , \lambda_4 \leq 0$ so $\lambda \in S(j)$.  The other sturdy condition is argued similarly.  By Theorem \ref{thmMain}, $M(x_{-j})$ is calibrated, say $M(X_{-j}) = M(S',j-N)$.  The goal is to show $S' = S^{(j+1)}$ which is clear except over vertex $N$.  

Fix $\lambda \in \mathbb{Z}^4$ with $a\lambda_1 + b\lambda_2 - c\lambda_3 - d\lambda_4 = j-N+N = j$.  By the proof of Proposition \ref{proptauGraded}, for $\lambda$ to be in $S'$ at least one of
\begin{displaymath}
\lambda-(1,1,0,0), \lambda-(1,0,0,0), \lambda-(0,1,0,0), \lambda + (0,0,1,0), \lambda + (0,0,0,1), \lambda + (0,0,1,1)
\end{displaymath}
must be in $S = S^{(j)}$.  In all cases we conclude $\lambda_1, \lambda_2 \geq 0$ and $\lambda_3, \lambda_4 \leq 0$ so $\lambda \in S^{(j+1)}$.  Conversely, suppose $\lambda \in S^{(j+1)}$.  Note $j \geq 1$ so at least one of $\lambda_1, \ldots, \lambda_4$ is nonzero and at least one of the aforementioned six weights is in $S$.  The possible patterns for which of these are in $S$ line up exactly with the columns of Table \ref{tabSPrime}, so we get $\lambda \in S'$.

Now consider the variables $x_{N+j}$.  As $x_{N+1}$ results from a single mutation at $1$, $M(x_{N+1})$ is the simple representation at $1$ which equals $M(S^{(1)},-1)$.  For $j \geq 2$, $x_{N+j}$ is reached by the mutation sequence $1,2,\ldots, j$ so $M(x_{N+j}) = \Theta^{-1}(M(x_{N+j-1}))$.  We get by Proposition \ref{propDual} and induction that
\begin{displaymath}
M(x_{N+j}) = M(\Theta^{-(j-1)}(S^{(1)}),-j) = M(-\Theta^{j-1}(-S^{(1)}),-j).
\end{displaymath}
The result follows as $-S^{(1)} = S^{(1)}$ (with $t$ chanced from $-1$ to $-N$) and by the above $\Theta^{j-1}(S^{(1)}) = S^{(j)}$.  
\end{proof}

\begin{thm}
Let $F_i$ be the $F$-polynomial associated to the variable $x_i$ of the Gale-Robinson cluster algebra.  Then for all $j>0$
\begin{equation} \label{eqFNeg}
F_{1-j} = \sum_{R \in \mathcal{F}(S^{(j)})} \prod_{\lambda \in R}y_{a\lambda_1+b\lambda_2-c\lambda_3-d\lambda_4 + (N+1-j)}
\end{equation}
and
\begin{equation} \label{eqFPos}
F_{N+j} = \sum_{I \in J(S^{(j)})} \prod_{\lambda \in I}y_{-a\lambda_1-b\lambda_2+c\lambda_3+d\lambda_4+j}
\end{equation}
where $J(\cdot)$ denotes the set of order ideals of a poset.  In particular, under the specialization $x_1 = x_2 = \ldots = x_N = 1$
\begin{equation} \label{eqxi}
x_{N+j} = |J(S^{(j)})|
\end{equation}
for $j > 0$.
\end{thm}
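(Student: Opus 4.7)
The plan is to combine Proposition \ref{propMxi}, which identifies the representations $M(x_{1-j})$ and $M(x_{N+j})$ as specific calibrated representations, with Theorem \ref{thmFM}, which gives the $F$-polynomial of any calibrated representation as a sum over order filters of its degree set.

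First I would handle the negative-index formula. Proposition \ref{propMxi} gives $M(x_{1-j}) = M(S^{(j)}, j-N-1)$. Plugging $S = S^{(j)}$ and $t = j-N-1$ directly into Theorem \ref{thmFM} yields
\begin{displaymath}
F_{1-j} = \sum_{R \in \mathcal{F}(S^{(j)})} \prod_{\lambda \in R} y_{a\lambda_1+b\lambda_2-c\lambda_3-d\lambda_4 - (j-N-1)},
\end{displaymath}
which is exactly \eqref{eqFNeg}.

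Next I would treat the positive-index formula. Proposition \ref{propMxi} gives $M(x_{N+j}) = M(-S^{(j)}, -j)$, so Theorem \ref{thmFM} with $S = -S^{(j)}$ and $t = -j$ produces
\begin{displaymath}
F_{N+j} = \sum_{R \in \mathcal{F}(-S^{(j)})} \prod_{\mu \in R} y_{a\mu_1+b\mu_2-c\mu_3-d\mu_4 + j}.
\end{displaymath}
The key translation is that negation $\lambda \mapsto -\lambda$ is an order-reversing bijection between $S^{(j)}$ and $-S^{(j)}$, so it induces a bijection between order filters $R$ of $-S^{(j)}$ and order ideals $I = -R$ of $S^{(j)}$. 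Substituting $\mu = -\lambda$ for $\lambda \in I$ in each monomial flips all four signs on the weight components, giving \eqref{eqFPos}.

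Finally, for the specialization, I would appeal to the formula \eqref{eqxFg}, namely $x_{N+j} = x_1^{g_1}\cdots x_N^{g_N} F_{N+j}(\widehat{y}_1,\ldots,\widehat{y}_N)$. Setting $x_1 = \cdots = x_N = 1$, the prefactor becomes $1$ and each $\widehat{y}_k = \prod_{k\to j} 1 \,/\, \prod_{i\to k} 1 = 1$ from \eqref{eqyHat}. Thus $x_{N+j}$ specializes to $F_{N+j}(1,\ldots,1)$, and in \eqref{eqFPos} each order ideal $I$ contributes exactly $1$, so this equals $|J(S^{(j)})|$.

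No step presents a serious obstacle: once the identifications in Proposition \ref{propMxi} and the $F$-polynomial formula of Theorem \ref{thmFM} are in hand, the argument is purely bookkeeping. The only place requiring care is the order-reversing bijection used to convert order filters of $-S^{(j)}$ into order ideals of $S^{(j)}$ and to track the resulting sign flip in the weight exponent.
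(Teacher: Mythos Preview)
Your proposal is correct and follows essentially the same approach as the paper's own proof: derive \eqref{eqFNeg} by direct substitution of Proposition \ref{propMxi} into Theorem \ref{thmFM}, obtain \eqref{eqFPos} via the order-reversing bijection $\lambda \mapsto -\lambda$ between order filters of $-S^{(j)}$ and order ideals of $S^{(j)}$, and deduce \eqref{eqxi} from \eqref{eqxFg} and \eqref{eqyHat} under $x_i=1$. The paper's argument is identical in structure and detail.
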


\begin{rmk}
Formulas for these $F$-polynomials are given in \cite{JMZ} in terms of perfect matchings, and the authors also mention height functions which provide the translation to order ideals.  Another point of comparison is \cite{EF} which discusses order ideal formulas for $F$-polynomials employing the flavored quiver model.  Our contribution is the explicit description \eqref{eqSj} of the posets $S^{(j)}$ which we believe makes \eqref{eqxi} more concise than the other combinatorial formula for the Gale-Robinson sequence in the literature.
\end{rmk}

\begin{proof}
First, \eqref{eqFNeg} follows directly from Theorem \ref{thmFM} and Proposition \ref{propMxi}.  Applying the same approach to the variable $x_{N+j}$ yields
\begin{displaymath}
F_{N+j} = F(M(-S^{(j)}, -j)) 
= \sum_{R \in \mathcal{F}(-S^{(j)})} \prod_{\lambda \in R}y_{a\lambda_1+b\lambda_2-c\lambda_3-d\lambda_4 + j} .
\end{displaymath}
Negation gives a bijection between order filters of $-S^{(j)}$ and order ideals of $S^{(j)}$.  On the level of individual weights, each $\lambda$ is replaced by $-\lambda$ which explains the differing signs in the subscript of \eqref{eqFPos}.

Lastly, suppose $x_1= \ldots = x_N = 1$.  Then by \eqref{eqxFg} and \eqref{eqyHat}
\begin{displaymath}
x_{N+j} = F_{N+j}(1,1,\ldots, 1)
\end{displaymath} 
which by \eqref{eqFPos} equals the number of order ideals of $S^{(j)}$.
\end{proof}

\begin{ex}
Consider the Somos-$4$ quiver with $a=1$, $b=3$, and $c=d=2$.  The posets $S^{(j)}$ for $j=1,2,3$ are given in Figure \ref{figSj}.  Each vertex $\lambda$ is labeled according to $-\lambda_1 - 3\lambda_2 + 2\lambda_3 + 2\lambda_4 + j$ to agree with \eqref{eqFPos}.  The $F$-polynomials are generating functions over order ideals
\begin{align*}
F_5 &= 1+y_1 \\
F_6 &= 1 + y_2 + y_1y_2 \\
F_7 &= 1 + 2y_1 + y_1^2 + y_1^2y_3 + y_1^2y_2y_3 + y_1^3y_2y_3
\end{align*}
The polynomials $F_0$, $F_{-1}$, and $F_{-2}$ are obtained from the same posets by subtracting all vertex weights from $N+1=5$ and taking order filters instead of order ideals.
\end{ex}

\begin{figure}
\begin{pspicture}(-1,0)(12,4.5)
\rput(0,2.5){
\rput(1,1){\rnode{u1}{(0,0,0,0)}}
\rput(-1,1){\rnode{u1L}{$1$}}
}

\rput(1,1){\rnode{v1}{(0,0,0,0)}}
\rput(1,2){\rnode{v2}{(1,0,0,0)}}
\ncline{v1}{v2}
\psset{nodesep=2pt}
\rput(-1,1){\rnode{v1L}{$2$}}
\rput(-1,2){\rnode{v2L}{$1$}}
\ncline{v1L}{v2L}
\psset{nodesep=0pt}

\rput(4,0){
\rput(1,1){\rnode{w1}{(0,0,-1,0)}}
\rput(3,1){\rnode{w2}{(0,0,0,-1)}}
\rput(2,2){\rnode{w3}{(0,0,0,0)}}
\rput(2,3){\rnode{w4}{(1,0,0,0)}}
\rput(2,4){\rnode{w5}{(2,0,0,0)}}
\ncline{w1}{w3}
\ncline{w2}{w3}
\ncline{w3}{w4}
\ncline{w4}{w5}
\rput(5,1){\rnode{w1L}{$1$}}
\rput(7,1){\rnode{w2L}{$1$}}
\rput(6,2){\rnode{w3L}{$3$}}
\rput(6,3){\rnode{w4L}{$2$}}
\rput(6,4){\rnode{w5L}{$1$}}
\psset{nodesep=2pt}
\ncline{w1L}{w3L}
\ncline{w2L}{w3L}
\ncline{w3L}{w4L}
\ncline{w4L}{w5L}
\psset{nodesep=0pt}
}
\psline(3,.5)(3,4.5)
\psline(-1.5,2.75)(3,2.75)
\end{pspicture}
\caption{The posets $S^{(1)}$ (top left), $S^{(2)}$ (bottom left), and $S^{(3)}$ (right)}
\label{figSj}
\end{figure}
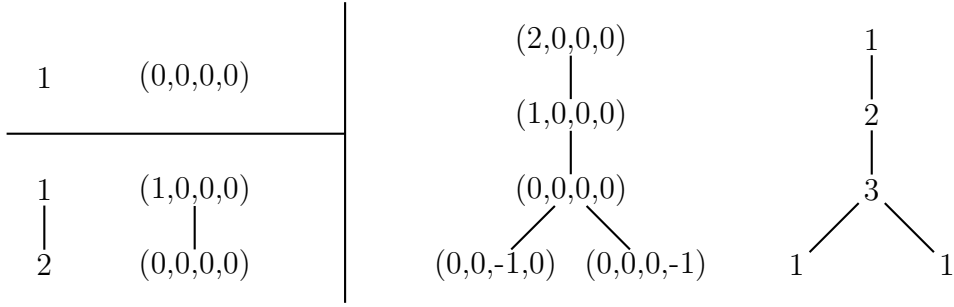

We conclude with a brief discussion of cluster variables other than the $x_i$.  As we will see, only some such variables come from a calibrated representation, but these are a natural family to consider first.  The goal is to classify calibrated representations $M(S,t)$ that correspond to cluster variables.  We have a conjecture along these lines in the special case that $M(S,t)$ is a cyclic module, i.e. $S$ has a unique minimal element.

\begin{conj} \label{conjCyclic}
Fix a vertex $v$ and a set of vertices $\bar{V} \subseteq \{1,2,\ldots, N\}$ that includes $v$.  Choose some $\mu \in X_t$ with $a\mu_1 + b\mu_2 - c\mu_3 -d\mu_4 = t+v$.  Let $S$ be the set of $\lambda \geq \mu$ in $X_t$ with the property that 
\begin{displaymath}
\mu \leq \lambda' \leq \lambda \Longrightarrow a\lambda_1' + b\lambda_2' - c\lambda_3' -d\lambda_4' - t \in \bar{V}.
\end{displaymath}
If $S$ is finite then there is a non-initial cluster variable $z$ so that $M(S,t) = M(z)$. 
\end{conj}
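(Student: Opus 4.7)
I would first check that $S$ automatically satisfies the hypotheses of Theorem~\ref{thmMain}. Finiteness is given. For interval-closedness, if $\lambda \leq \sigma \leq \nu$ with $\lambda, \nu \in S$, then $\sigma \geq \mu$ and every $\sigma' \in [\mu, \sigma]$ lies in $[\mu, \nu]$, so has level in $\bar{V}$. Connectedness follows because any $\lambda \in S$ is joined to $\mu$ by a saturated chain inside $[\mu, \lambda] \subseteq S$. For sturdiness, if $\lambda+(1,0,0,0)$ and $\lambda+(0,1,0,0)$ both lie in $S$ with $a\lambda_1 + b\lambda_2 - c\lambda_3 - d\lambda_4 = t+1$, then combining the two inequalities $\lambda+(1,0,0,0) \geq \mu$ and $\lambda+(0,1,0,0) \geq \mu$ componentwise gives $\lambda \geq \mu$; since $\lambda \in [\mu, \lambda+(1,0,0,0)]$ has level $t+1$, this level lies in $\bar{V}$, so $\lambda \in S$. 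The second sturdy condition is analogous.

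My strategy is induction on $|S|$, modeled on the proof of Proposition~\ref{propMxi}. For the base case $|S| = 1$, the representation $M(\{\mu\}, t)$ is the simple positive at vertex $v = a\mu_1 + b\mu_2 - c\mu_3 - d\mu_4 - t$; this equals $M(z)$ for the non-initial cluster variable $z$ obtained by a single mutation at $v$ from the initial seed (the case $v=1$, $t=-1$ is literally $M(x_{N+1})$ from Proposition~\ref{propMxi}, and the remaining cases follow by the rotational symmetry of $Q$ under $\rho$). For the inductive step, I would choose an appropriate power of $\rho$ so as to place $\mu$ at a vertex where $\Theta^{-1}$ strips off precisely the element $\mu$ from $S$, producing a strictly smaller calibrated $M(S', t-1)$ of the same conjectural form. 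The inductive hypothesis then yields a cluster variable $z'$ with $M(z') = M(S', t-1)$, and Proposition~\ref{proptauz} assembles the mutation sequence for $z$ by prepending a mutation at vertex~$1$ and correcting for the rotation.

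The principal obstacle is that $\Theta^{-1}$ requires $-S$ to be sturdy (Proposition~\ref{propDual}), which is a \emph{dual} condition---$\lambda - (1,0,0,0), \lambda - (0,1,0,0) \in S \Rightarrow \lambda \in S$ at the top level $t+N$---that is \emph{not} automatic from sturdiness of $S$, since $[\mu, \lambda]$ may contain elements incomparable to both $\lambda - (1,0,0,0)$ and $\lambda - (0,1,0,0)$ whose levels need not lie in $\bar{V}$. Compounding this, a general $\bar{V}$ produces posets whose behavior under $\Theta^{\pm 1}$ is governed by the eight cases of Table~\ref{tabSPrime}, and controlling precisely which elements are added or removed demands a careful combinatorial analysis. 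Overcoming these difficulties probably requires either choosing the rotation more cleverly so that the needed dual sturdiness holds, substituting the opposite-quiver operator $\Theta^{\opp}$ from Proposition~\ref{propDual} for $\Theta^{-1}$ at appropriate steps, or inducting on a more refined invariant such as the height of $S$ above $\mu$.

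A complementary approach worth pursuing in parallel would bypass the inductive reduction altogether by showing $E(M(S,t)) = 0$ directly, since vanishing of the $E$-invariant characterizes those decorated representations that arise as $M(z)$ for some cluster variable. Calibrated representations have an especially rigid, one-dimensional graded structure, so $E$ ought to decompose as a sum of local contributions indexed by $\lambda \in S$; computing it explicitly and showing the total vanishes could sidestep the combinatorial obstructions of the mutation-based approach.
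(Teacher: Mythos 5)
The statement you were asked to prove is labeled a \emph{conjecture} in the paper (Conjecture~\ref{conjCyclic}); the authors offer no proof and explicitly leave it open. There is therefore no argument in the paper against which to compare your proposal, and your ending with acknowledged obstacles rather than a closed argument is consistent with the statement's actual status.

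A few remarks on the proposal itself. Your opening verification --- that the set $S$ constructed in the conjecture is automatically interval-closed, connected, and sturdy --- is correct and worth recording: it is what guarantees $M(S,t)$ is a well-defined calibrated representation via Proposition~\ref{propMSt} and brings it into the range of Theorem~\ref{thmMain}, and the paper does not spell this out. Your diagnosis of the obstacle to an induction modeled on Proposition~\ref{propMxi} is also on target: applying $\Theta^{-1}$ requires the \emph{dual} sturdy condition on $S$ (equivalently, sturdiness of $-S$, by Proposition~\ref{propDual}), and this does not follow from the hypotheses of the conjecture for general $\bar{V}$, in contrast to the special sets $S^{(j)}$ of~\eqref{eqSj} where it can be read off directly. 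On the $E$-invariant route: the paper only records that $E(M(z))=0$ is a \emph{necessary} condition for $M(z)$ to arise from a non-initial cluster variable; the converse characterization you invoke would have to be imported from \cite{DWZ2} together with its hypotheses (indecomposability, nondegeneracy of the potential) before a computation of $E(M(S,t))$ could settle anything. In short, you have neither a proof nor a refutation but a fair map of the terrain, which is exactly what a labeled conjecture leaves open.
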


Informally, $M(S,t)$ is obtained by restricting to the vertex set $\bar{V}$ and taking a projective module of the resulting quiver with relations.  Combining the ideas of this paper, we get a large family of combinatorial formulas for $F$-polynomials that (conjecturally) correspond to cluster variables as follows:
\begin{enumerate}
\item Pick $v$ and $\bar{V}$ satisfying the conditions of Conjecture \ref{conjCyclic} in order to obtain $S$.
\item Pick some integer $k$ so that $\Theta^k$ is defined on $S$ to obtain $S'=\Theta^k(S)$.
\item Calculate the generating function $F$ over order filters of $S'$.
\end{enumerate}

\begin{figure}
\begin{pspicture}(0,.5)(6,3.5)
	\rput(1,1){\rnode{w1}{$3$}}
	\rput(2,1){\rnode{w2}{$4$}}
	\rput(3,1){\rnode{w3}{$1$}}
	\rput(4,1){\rnode{w4}{$2$}}
	\rput(5,1){\rnode{w5}{$3$}}
	\rput(1,2){\rnode{V1}{$1$}}
	\rput(2,2){\rnode{V2}{$2$}}
	\rput(3,2){\rnode{V3}{$3$}}
	\rput(4,2){\rnode{V4}{$4$}}
	\rput(5,2){\rnode{V5}{$1$}}
	\rput(1,3){\rnode{W1}{$3$}}
	\rput(2,3){\rnode{W2}{$4$}}
	\rput(3,3){\rnode{W3}{$1$}}
	\rput(4,3){\rnode{W4}{$2$}}
	\rput(5,3){\rnode{W5}{$3$}}
	\ncline{->}{w1}{w2}
	\ncline{->}{w3}{w2}
	\ncline{->}{w3}{w4}
	\ncline{->}{w4}{w5}
	\ncline{->}{w1}{V1}
	\ncline{<-}{w1}{V2}
	\ncline{->}{w2}{V2}
	\ncline{<-}{w3}{V3}
	\ncline{->}{w4}{V3}
	\ncline{<-}{w4}{V4}
	\ncline{->}{w5}{V5}
	\ncline{->}{V1}{V2}
	\ncline{->}{V2}{V3}
	\ncline{->}{V3}{V4}
	\ncline{->}{V5}{V4}
	\ncline{->}{W1}{W2}
	\ncline{->}{W3}{W2}
	\ncline{->}{W3}{W4}
	\ncline{->}{W4}{W5}
	\ncline{->}{W1}{V1}
	\ncline{->}{W2}{V2}
	\ncline{<-}{W3}{V3}
	\ncline{<-}{W4}{V4}
	\ncline{->}{W5}{V5}
	\ncline{->}{V2}{W1}
	\ncline{->}{W4}{V3}
\end{pspicture}
\caption{Part of the lift $\tilde{Q}$ of the Somos-$4$ quiver}
\label{figSomos4}
\end{figure}
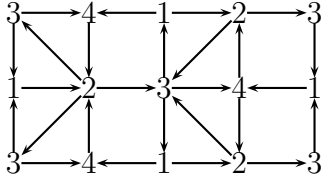

\begin{ex}
Let $Q$ be the Somos-$4$ quiver given in Figure \ref{figSomos4}.  The variables $x_{-1}$, $x_{0}$, $x_5$, $x_6$, and $x_7$ all correspond to cyclic modules.  Indeed, the associated sets $S^{(2)}$, $S^{(1)}$, $-S^{(1)}$, $-S^{(2)}$, $-S^{(3)}$ all have a unique minimal element (see Figure \ref{figSj} and note $-S^{(j)}$ is the opposite poset of $S^{(j)}$).  Each can be expressed in the terms of Conjecture \ref{conjCyclic}.  For instance, $-S^{(3)}$ is obtained by taking $v=1$ and restricting to $\bar{V} = \{1,2,3\}$ starting from the weight $\mu = (-2,0,0,0)$.  
\end{ex}

\begin{ex} \label{exSimple2}
Continuing with the Somos-$4$ quiver, take $v=2$ and $\bar{V} = \{2\}$.  The result $M$ is the simple representation at $2$.  Figure \ref{figSimple2} shows the representations $\Theta^k(M)$ for $k=0,1,\ldots, 4$.  Each is calibrated and the degree set $S$ is encoded by a \emph{contour}, that is a simple closed path in the dual graph of $\tilde{Q}$.  In these cases, 
\begin{itemize}
\item $S$ is in bijection with the set of vertices of $\tilde{Q}$ inside the contour and
\item if there is arrow $\alpha$ of $\tilde{Q}$ from $\lambda \in S$ to $\lambda' \in S$ then $\lambda' = \lambda + \wt(\alpha)$
\end{itemize}
These conditions determine $S$ up to translation.  For instance $\Theta^4(M) = M(S,-1)$ where
\begin{align*}
S_1 &= \{(0,0,0,0)\} \\
S_2 &= \emptyset \\
S_3 &= \{(0,0,-1,0), (0,0,0,-1)\} \\
S_4 &= \{(1,0,-1,0), (1,0,0,-1), (0,1,0,0)\}
\end{align*}
Note $S' = \Theta(S)$ is not sturdy because $S_1' = S_2 = \emptyset$ while $S_3'=S_4$ contains $(1,0,-1,0)$ and $(1,0,0,-1)$.  So $\Theta^5(M)$ is calibrated but $\Theta^6(M)$ is not.

The simple representation $M$ corresponds to the variable obtained by the single mutation at $2$.  By Proposition \ref{proptauz}, the representation $\Theta(M)$ corresponds to $4,1$, $\Theta^2(M)$ corresponds to $4,3,4$, $\Theta^3(M)$ corresponds to $4,3,2,3$ and so on.  Note that in each case the final mutation is not at a degree $4$ vertex.  The first non-calibrated example $\Theta^6(M)$, then, occurs for the mutation sequence $4,3,2,1,4,3,4$.
\end{ex}

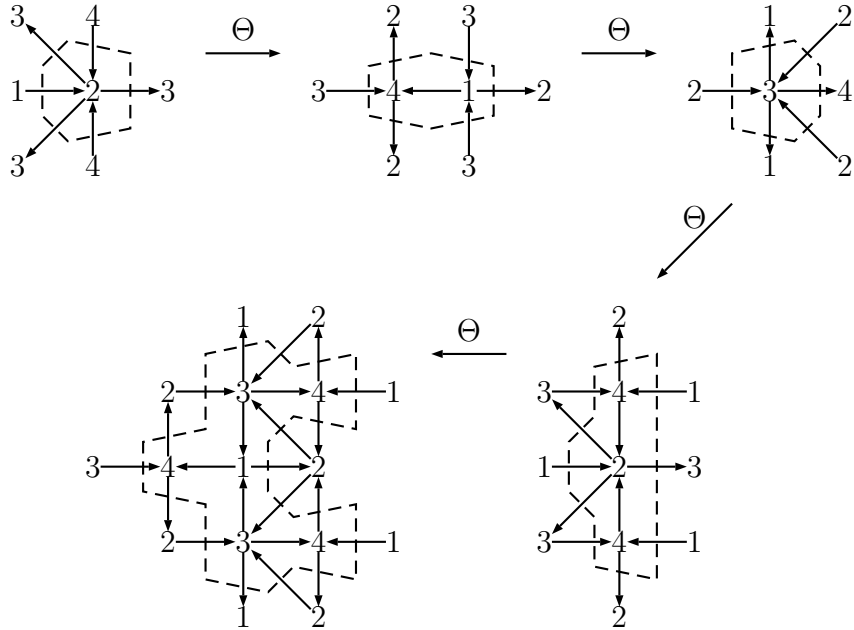
\begin{figure}
\begin{pspicture}(0,0.5)(13,9.5)
\psset{arrowinset=0}
\rput(0,6){
\rput(1,1){\rnode{v11}{$3$}}
\rput(2,1){\rnode{v21}{$4$}}
\rput(1,2){\rnode{v12}{$1$}}
\rput(2,2){\rnode{v22}{$2$}}
\rput(3,2){\rnode{v32}{$3$}}
\rput(1,3){\rnode{v13}{$3$}}
\rput(2,3){\rnode{v23}{$4$}}
\ncline{->}{v22}{v11}
\ncline{->}{v12}{v22}
\ncline{->}{v22}{v13}
\ncline{->}{v23}{v22}
\ncline{->}{v22}{v32}
\ncline{->}{v21}{v22}
\pspolygon[linestyle=dashed](2.5,2.5)(1.67,2.67)(1.33,2.33)(1.33,1.67)(1.67,1.33)(2.5,1.5)
}
\psline{->}(3.5,8.5)(4.5,8.5)
\uput[u](4,8.5){$\Theta$}

\rput(4,6){
\rput(2,1){\rnode{v21}{$2$}}
\rput(3,1){\rnode{v31}{$3$}}
\rput(1,2){\rnode{v12}{$3$}}
\rput(2,2){\rnode{v22}{$4$}}
\rput(3,2){\rnode{v32}{$1$}}
\rput(4,2){\rnode{v42}{$2$}}
\rput(2,3){\rnode{v23}{$2$}}
\rput(3,3){\rnode{v33}{$3$}}
\ncline{->}{v22}{v21}
\ncline{->}{v31}{v32}
\ncline{->}{v12}{v22}
\ncline{->}{v32}{v22}
\ncline{->}{v32}{v42}
\ncline{->}{v22}{v23}
\ncline{->}{v33}{v32}
\pspolygon[linestyle=dashed](3.33,2.33)(2.5,2.5)(1.67,2.33)(1.67,1.67)(2.5,1.5)(3.33,1.67)
}
\psline{->}(8.5,8.5)(9.5,8.5)
\uput[u](9,8.5){$\Theta$}

\rput(9,6){
\rput(2,1){\rnode{v21}{$1$}}
\rput(3,1){\rnode{v31}{$2$}}
\rput(1,2){\rnode{v12}{$2$}}
\rput(2,2){\rnode{v22}{$3$}}
\rput(3,2){\rnode{v32}{$4$}}
\rput(2,3){\rnode{v23}{$1$}}
\rput(3,3){\rnode{v33}{$2$}}
\ncline{->}{v22}{v21}
\ncline{->}{v31}{v22}
\ncline{->}{v12}{v22}
\ncline{->}{v22}{v32}
\ncline{->}{v22}{v23}
\ncline{->}{v33}{v22}
\pspolygon[linestyle=dashed](2.67,2.33)(2.33,2.67)(1.5,2.5)(1.5,1.5)(2.33,1.33)(2.67,1.67)
}
\psline{->}(10.5,6.5)(9.5,5.5)
\uput[u](10,6){$\Theta$}

\rput(7,0){
\rput(2,1){\rnode{v21}{$2$}}
\rput(1,2){\rnode{v12}{$3$}}
\rput(2,2){\rnode{v22}{$4$}}
\rput(3,2){\rnode{v32}{$1$}}
\rput(1,3){\rnode{v13}{$1$}}
\rput(2,3){\rnode{v23}{$2$}}
\rput(3,3){\rnode{v33}{$3$}}
\rput(1,4){\rnode{v14}{$3$}}
\rput(2,4){\rnode{v24}{$4$}}
\rput(3,4){\rnode{v34}{$1$}}
\rput(2,5){\rnode{v25}{$2$}}
\ncline{->}{v22}{v21}
\ncline{->}{v12}{v22}
\ncline{->}{v32}{v22}
\ncline{->}{v23}{v12}
\ncline{->}{v22}{v23}
\ncline{->}{v13}{v23}
\ncline{->}{v23}{v33}
\ncline{->}{v23}{v14}
\ncline{->}{v24}{v23}
\ncline{->}{v14}{v24}
\ncline{->}{v34}{v24}
\ncline{->}{v24}{v25}
\pspolygon[linestyle=dashed](2.5,3.5)(2.5,4.5)(1.67,4.33)(1.67,3.67)(1.33,3.33)(1.33,2.67)(1.67,2.33)(1.67,1.67)(2.5,1.5)(2.5,2.5)
}
\psline{->}(7.5,4.5)(6.5,4.5)
\uput[u](7,4.5){$\Theta$}

\rput(1,0){
\rput(3,1){\rnode{v31}{$1$}}
\rput(4,1){\rnode{v41}{$2$}}
\rput(2,2){\rnode{v22}{$2$}}
\rput(3,2){\rnode{v32}{$3$}}
\rput(4,2){\rnode{v42}{$4$}}
\rput(5,2){\rnode{v52}{$1$}}
\rput(1,3){\rnode{v13}{$3$}}
\rput(2,3){\rnode{v23}{$4$}}
\rput(3,3){\rnode{v33}{$1$}}
\rput(4,3){\rnode{v43}{$2$}}
\rput(2,4){\rnode{v24}{$2$}}
\rput(3,4){\rnode{v34}{$3$}}
\rput(4,4){\rnode{v44}{$4$}}
\rput(5,4){\rnode{v54}{$1$}}
\rput(3,5){\rnode{v35}{$1$}}
\rput(4,5){\rnode{v45}{$2$}}
\ncline{->}{v32}{v31}
\ncline{->}{v41}{v32}
\ncline{->}{v42}{v41}
\ncline{->}{v22}{v32}
\ncline{->}{v32}{v42}
\ncline{->}{v52}{v42}
\ncline{->}{v23}{v22}
\ncline{->}{v32}{v33}
\ncline{->}{v43}{v32}
\ncline{->}{v42}{v43}
\ncline{->}{v13}{v23}
\ncline{->}{v33}{v23}
\ncline{->}{v33}{v43}
\ncline{->}{v23}{v24}
\ncline{->}{v34}{v33}
\ncline{->}{v43}{v34}
\ncline{->}{v44}{v43}
\ncline{->}{v24}{v34}
\ncline{->}{v34}{v44}
\ncline{->}{v54}{v44}
\ncline{->}{v34}{v35}
\ncline{->}{v45}{v34}
\ncline{->}{v44}{v45}
\pspolygon[linestyle=dashed](3.33,3.33)(3.67,3.67)(4.5,3.5)(4.5,4.5)(3.67,4.33)(3.33,4.67)(2.5,4.5)(2.5,3.5)(1.67,3.33)(1.67,2.67)(2.5,2.5)(2.5,1.5)(3.33,1.33)(3.67,1.67)(4.5,1.5)(4.5,2.5)(3.67,2.33)(3.33,2.67)
}
\end{pspicture}

\caption{Part of the $\Theta$ orbit of the simple representation at $2$}
\label{figSimple2}
\end{figure}

\begin{rmk}
There is a general recipe to go from a contour $C$ to a poset $S$ \cite{P} (the rules in Example \ref{exSimple2} only apply when $C$ does not contain a face of $\tilde{Q}$).  Typically the map from $S$ to the set of vertices inside $C$, which can be expressed as $\lambda \in S \mapsto (\lambda_1-\lambda_2, -\lambda_3+\lambda_4) \in \mathbb{Z}^2$, is many to one.  Roughly speaking, $S$ is the largest subset of $X_t$ with the appropriate image under this map.  Not every possible degree set $S$ comes from a contour in this way, although all examples we have observed that give rise to cluster variables do come from contours.
\end{rmk}

\medskip

\textbf{Acknowledgments.}  We thank Gregg Musiker for in large part introducing us to this subject, sharing his thoughts, and pointing us to several key references.  We also thank Dave Anderson for helpful conversations.


\begin{thebibliography}{99}
\bibitem{BPW} M. Bousquet-M\'elou, J. Propp, and J. West, Perfect matchings for the three-term Gale-Robinson sequences, \textsl{Electron. J. Combin.} \textbf{16} (2009), R125.
\bibitem{B} N. Broomhead, Dimer models and Calabi-Yau algebras, arXiv:0901.4662.
\bibitem{DWZ1} H. Derksen, J. Weyman, and A. Zelevinsky, Quivers with potential and their representations I: Mutations, \textsl{Selecta Math.} \textbf{14} (2008), 59--119.
\bibitem{DWZ2}  H. Derksen, J. Weyman, and A. Zelevinsky, Quivers with potential and their representations II: applications to cluster algebras, \textsl{J. Amer. Math. Soc.}, \textbf{23} (2010), 749--790.
\bibitem{E} R. Eager, Brane tilings and non-commutative geometry, \textsl{J. High Energy Phys.} (2011), 026, 20pp.
\bibitem{EF} R. Eager and S. Franco, Colored BPS pyramid partition functions, quivers and cluster transformations, \textsl{J. High Energy Phys.} (2012), 038, front matter+42pp.
\bibitem{FZ0} S. Fomin and A. Zelevinsky, The Laurent Phenomenon, \textsl{Adv. in Appl. Math.} \textbf{28} (2002), 119-144.
\bibitem{FZ1} S. Fomin and A. Zelevinsky, Cluster algebras I: Foundations, \textsl{J. Amer. Math. Soc.} \textbf{15} (2002), 497--529.
\bibitem{FZ2} S. Fomin and A. Zelevinsky, Cluster algebras IV: Coefficients, \textsl{Compos. Math.} \textbf{143} (2007), 112--164.
\bibitem{FM} A. Fordy and R. Marsh, Cluster mutation-periodic quivers and associated Laurent sequences, \textsl{J. Algebraic Combin.} \textbf{34} (2011), 19--66.
\bibitem{GLVY} Y. Gao, Z. Li, T. Vuong, and L. Yang, Toric Mutations in the dP2 Quiver and Subgraphs of the dP2 Brane Tiling, arXiv:1611.05320.
\bibitem{JMZ} I. Jeong, G. Musiker, and S. Zhang, Gale-Robinson sequences and brane tilings, \textsl{25th Int. Conf. on Formal Power Series and Alg. Combinatorics} (2013), 707--718.
\bibitem{MR} S. Mozgovoy and M. Reineke, On the noncommutative Donaldson-Thomas invariants arising from brane tilings, \textsl{Adv. Math.} \textbf{223} (2010), 1521--1544.
\bibitem{P} J. Propp, Lattice structure for orientations of graphs, arXiv:0209005.
\bibitem{S} D. Speyer, Perfect matchings and the octahedron recurrence, \textsl{J. Algebraic Combin.} \textbf{25} (2007), 309--348.
\end{thebibliography}
\end{document}